
\documentclass[reqno,11pt]{article}
\usepackage{graphicx, amsmath, amsthm}
\usepackage{amssymb}

\input{su2.sty}


\begin{document}


\title{The effect of classical noise \\
on a quantum two-level system}
\author{Jean-Philippe Aguilar, Nils Berglund}
\date{}   

\maketitle

\begin{abstract}
\noindent
We consider a quantum two-level system perturbed by classical noise. The noise
is implemented as a stationary diffusion process in the off-diagonal matrix
elements of the Hamiltonian, representing a transverse magnetic field. We
determine the invariant measure of the system and prove its uniqueness. In the
case of Ornstein--Uhlenbeck noise, we determine the speed of convergence to the
invariant measure. Finally, we determine an approximate one-dimensional
diffusion equation for the transition probabilities. The proofs use both
spectral-theoretic and probabilistic methods.
\end{abstract}

\leftline{\small{\it Date.\/} May 7, 2008.}
\leftline{\small 2000 {\it Mathematical Subject Classification.\/} 
60h10, 35P15 (primary), 81Q15, 93E03 (secondary)
}
\noindent{\small{\it Keywords and phrases.\/}
spin $1/2$,
noise,
heat bath,
open systems, 
stochastic differential equations, 
Ornstein--Uhlenbeck process,
transition times,
transition probabilities, 
spectral gap,
diffusions on Lie groups, 
Stroock--Varadhan theorem, 
averaging,
renewal equations,
Laplace transforms
}  


\section{Introduction}
\label{sec_intro}

Noise is often used as a model for the effect of the environment (for instance
a heat bath) on a relatively small system. In general, it is difficult to prove
rigorously from first principles that a stochastic model indeed gives a good
approximation of the real dynamics. However, such a proof has been obtained in
a number of specific classical systems, see for
instance~\cite{FKM,SL77,EPR,RBT00,RBT02}. 

For quantum systems, the theory of effective stochastic models is not yet so
well-developed. One approach in which progress has been made in recent years is
the approach of repeated quantum interactions. If the quantum system is assumed
to interact during successive short time intervals with independent copies of a
quantum heat reservoir, one can under certain assumptions derive effective
equations involving quantum noises. See for
instance~\cite{AttalPautrat2006,BruneauJoyeMerkli2006,AttalJoye2007JFA,
AttalJoye2007JSP} for recent works in this direction. 

In the present work, we consider the intermediate situation of a classical noise
acting on a quantum system. We shall focus on the simplest possible quantum
system, namely a spin-$1/2$. The classical noise is realised by adding
off-diagonal stochastic processes in the system's Hamiltonian. This situation
can be realised, for instance, by letting the spin interact with a magnetic
field subject to small stochastic fluctuations, due to the magnet
creating the field being subject to weak random noise. 

When the spin is prepared, say, in the \lq\lq down\rq\rq\ state, and subjected
to a constant magnetic field in the $z$-direction, it will remain in the same
state for ever. If the direction of the magnetic field is allowed to fluctuate
in time, however, transitions to the  \lq\lq spin-up\rq\rq\ state become
possible. The aim of this work is to estimate how long it takes for these
transitions to occur, depending on the characteristics of the noise. 

This paper is organised as follows. Section~\ref{sec_res} contains a detailed
description of the model and all results. The main results are the following: 
Theorem~\ref{thm_res1} shows that under rather general assumptions on the
noise, the system admits a unique invariant measure induced by the Haar measure
on $\SU(2)$. Theorem~\ref{thm_res3} gives a spectral-gap estimate in the case
of Ornstein--Uhlenbeck noise, describing the speed of convergence to the
invariant measure, as a function of noise and coupling intensity, and of the
rate of decay of correlations of the noise term. Finally,
Theorem~\ref{thm_res2} uses an effective one-dimensional diffusion
approximation for the transition probability to derive expected transition
times. Sections~\ref{sec_puniq},~\ref{sec_pconv} and~\ref{sec_ptrans} contain
the proofs of these results. 


\section{Model and results}
\label{sec_res}


\subsection{Definition of the model}
\label{ssec_def}

The simplest possible quantum system is a two-level system (e.g. a spin
$1/2$), which can be described by the unperturbed Hamiltonian 
\begin{equation}
\label{def1}
H_0 = 
\begin{pmatrix}
\frac12 & 0 \\ 0 & -\frac12
\end{pmatrix}
\end{equation}
acting on the Hilbert space $\cH=\C^2$. 
We would like to perturb this system by \lq\lq classical noise\rq\rq, able
to induce transitions between the two energy levels. The noise should
however preserve the unitary character of the quantum evolution. A way to
do this is to add a non-diagonal interaction term to the Hamiltonian, of the
form $\kappa V(t)$, with
\begin{equation}
\label{def2}
V(t) = 
\begin{pmatrix}
0 & Z_t \\ \cc{Z_t} & 0
\end{pmatrix}\;, \qquad
Z_t = X_t + \icx Y_t\;,
\end{equation}
where $\set{Z_t}_{t\geqs0}$ is an ergodic, stationary
Markov process on some filtered probability space. This is equivalent to
assuming that the spin $1/2$ is interacting with a magnetic field
$B_t=(2\kappa X_t,2\kappa Y_t,1)$, since the total Hamiltonian can be
written 
\begin{equation}
H(t) = H_0 + \kappa V(t) = \kappa X_t\sigma^1 + \kappa Y_t\sigma^2 +
\frac12\sigma^3 \bydef \frac12 B_t \cdot \sigma\;,
\end{equation}
where the $\sigma^i$ are the Pauli matrices. For definiteness, we
shall assume that $Z_t$ is the solution of an It\^o
stochastic differential equation (SDE) of the form 
\begin{equation}
\label{def3a}
\6Z_t = f(Z_t) \6t + g(Z_t) \6W_t\;,
\end{equation}
where $\set{W_t}_{t\geqs0}$ denotes a standard Brownian motion, and $f$
and $g$ satisfy the usual Lipshitz and bounded-growth conditions ensuring
existence of a pathwise unique strong solution for any initial condition. 
A typical choice would be an Ornstein--Uhlenbeck process, defined by the 
SDE 
\begin{equation}
\label{def3}
\6Z_t = -\gamma Z_t \6t + \sigma \6W_t\;,
\end{equation}
where the initial condition $Z_0$ is a centred Gaussian of variance
$\sigma^2/2\gamma$. This represents the situation of the magnet generating the
field being subjected to a harmonic potential and white noise. 
This choice is also motivated by the fact that in certain
situations, the effect of a classical heat bath on a small (classical)
system has been rigorously shown to be describable by such an
Ornstein--Uhlenbeck process~\cite{EPR}. One can, however, consider other
types of stochastic processes as well, and part of our results do not
depend on the detailed definition of $Z_t$. Furthermore, one can easily
deal with more general interaction terms than~\eqref{def3}, including
several sources of noise for instance. However, the situation with a single
noise is in some sense the most interesting one, since it is in this
situation that noise-induced transitions are the most difficult. 

The evolution of the coupled system is governed by the
time-dependent Schr\"odinger equation 
\begin{equation}
\label{def4}
\icx \6\psi_t = H(t) \psi_t \6t\;.
\end{equation}
Provided the
stochastic process $Z_t$ has continuous sample paths, which are
stochastically bounded on compact time intervals\footnote{In other words,
$\lim_{L\to\infty} \prob{\sup_{0\leqs t\leqs T}\abs{Z_t}>L} = 0$ for all
$T>0$.} (as is the case for the
Ornstein--Uhlenbeck process), Dyson's theorem implies the existence of a
unique two-parameter family of unitary operators $U(t,s)\in\U(2)$, forming
a (strongly) continuous semi-group and such that 
\begin{equation}
\label{def5}
\psi_t = U(t,s)\psi_s
\end{equation}
holds almost surely for all $t > s$. In our case, $H(t)$ having zero trace,
we have in fact $U(t,s)\in\SU(2)$. For fixed $s$, the process $t\mapsto
U_t=U(t,s)$ satisfies the equation 
\begin{equation}
\label{def6}
\icx \6U_t = H(t) U_t \6t\;.
\end{equation}
Thus, characterising the evolution of the quantum system is equivalent to
characterising the stochastic process $\set{U_t}_{t\geqs0}$
on $\SU(2)$.


\subsection{Coordinates on $\SU(2)$}
\label{ssec_coord}

In order to study the solutions of Equation~\eqref{def6}, we have to
choose a convenient parametrisation of the Lie group $\SU(2)$. A possible
choice is to decompose elements of $\SU(2)$ on a basis of identity and
Pauli matrices as 
\begin{equation}
\label{coord1}
U = \icx(x_1\sigma^1+x_2\sigma^2+x_3\sigma^3) + x_4\one = 
\begin{pmatrix}
\phantom{-}x_4 + \icx x_3 & x_2 + \icx x_1 \\
 -x_2 + \icx x_1 & x_4 - \icx x_3
\end{pmatrix}\;,
\end{equation}
where the $x_i$ are real and satisfy 
\begin{equation}
\label{coord2}
x_1^2+x_2^2+x_3^2+x_4^2 = 1\;.
\end{equation}
Plugging into~\eqref{def6} and using It\^o's formula, one gets the
system of equations 
\begin{equation}
\label{coord3}
\begin{split}
\6x_{1,t} &= \bigbrak{-\tfrac12 x_{2,t} 
- \kappa X_t \, x_{4,t} - \kappa Y_t \, x_{3,t}}\6t\;, \\
\6x_{2,t} &= \phantom{-}\bigbrak{\tfrac12 x_{1,t} 
- \kappa X_t \, x_{3,t} + \kappa Y_t \, x_{4,t}}\6t\;, \\
\6x_{3,t} &= \bigbrak{-\tfrac12 x_{4,t} 
+ \kappa X_t \, x_{2,t} + \kappa Y_t \, x_{1,t}}\6t\;, \\
\6x_{4,t} &= \phantom{-}\bigbrak{\tfrac12 x_{3,t} 
+ \kappa X_t \, x_{1,t} - \kappa Y_t \, x_{2,t}}\6t\;.
\end{split}
\end{equation}
However, one of these equations is redundant because of
Condition~\eqref{coord2}. It is preferable to work with a
three-dimensional parametrisation of $\SU(2)$. A classical way to do
this is to write $U\in\SU(2)$ as the exponential $U=\e^{\icx M}$, where
$M$ lives in the Lie algebra $\su(2)$. Decomposing $M$ on the basis of
Pauli matrices as $M=r(\hat y_1\sigma^1+\hat y_2\sigma^2+\hat
y_3\sigma^3)$, with $\hat y_1^2+\hat y_2^2+\hat y_3^2=1$, 
yields the representation
\begin{equation}
\label{coord4}
U = 
\begin{pmatrix}
\cos r + \icx \hat y_3 \sin r &
\brak{\icx \hat y_1 + \hat y_2}\sin r \\
\brak{\icx \hat y_1 - \hat y_2}\sin r &
\cos r - \icx \hat y_3 \sin r 
\end{pmatrix}\;.
\end{equation}
One can then write the $\hat y_i$ in spherical coordinates in order to
obtain a three-dimensional system equivalent to~\eqref{def6}. However, the
resulting system turns out to have a rather complicated form. 

After some trials, one finds that the best suited parametrisation of
$\SU(2)$ for the present situation is given by 
\begin{equation}
\label{coord5}
U = u(\chi,\phi,\psi) \defby 
\begin{pmatrix}
\cos\chi \e^{-\icx(\phi/2+\psi)} & \sin\chi \e^{-\icx(\phi/2-\psi)} \\
-\sin\chi \e^{\icx(\phi/2-\psi)} & \cos\chi \e^{\icx(\phi/2+\psi)}
\end{pmatrix}\;.
\end{equation}
The variable $\chi$ lives in the interval $[0,\pi/2]$, while the pair
$(\phi/2+\psi,\phi/2-\psi)$ lives in the two-torus $\T^2$.\footnote{Hence
$(\phi,\psi)$ belong to a twisted two-torus (because
$u(\chi,\phi+2\pi,\psi)=u(\chi,\phi,\psi+\pi)$), but this will be of no
concern to us.}

\begin{prop}
\label{prop_coord}
The system~\eqref{def6} is equivalent to the system
\begin{equation}
\label{coord6}
\begin{split}
\6\chi_t &= \kappa \bigbrak{X_t\,\sin\phi_t + Y_t\,\cos\phi_t}
\,\6t\;,\\
\6\phi_t &= \biggpar{1 + \frac{2\kappa}{\tan 2\chi_t} 
\bigbrak{X_t\,\cos\phi_t - Y_t\,\sin\phi_t}} \6t\;,\\
\6\psi_t &= -\frac{\kappa}{\sin 2\chi_t} 
\bigbrak{X_t\,\cos\phi_t - Y_t\,\sin\phi_t} \6t\;.
\end{split}
\end{equation}
\end{prop}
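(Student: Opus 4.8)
The plan is to verify the equivalence by a direct pathwise computation, which is legitimate because \eqref{def6} is, for each fixed realisation of the noise, an \emph{ordinary} differential equation: since $Z_t$ has continuous sample paths, $t\mapsto H(t)$ is continuous, the propagator $U_t$ is $C^1$ in $t$, and hence the coordinate processes $\chi_t,\phi_t,\psi_t$ determined by $U_t=u(\chi_t,\phi_t,\psi_t)$ (well-defined as long as $\chi_t\in(0,\pi/2)$, the range where the map~\eqref{coord5} is a local diffeomorphism) are of finite variation. No It\^o correction therefore arises, and the ordinary chain rule applies to $t\mapsto u(\chi_t,\phi_t,\psi_t)$.

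The computation is cleanest in right-translated form. Using $U_t^{-1}=U_t^\dagger$, Equation~\eqref{def6} reads $(\6U_t)\,U_t^{-1}=-\icx H(t)\,\6t$, and the chain rule gives
\begin{equation*}
R_\chi\,\6\chi_t+R_\phi\,\6\phi_t+R_\psi\,\6\psi_t=-\icx H(t)\,\6t\;,
\end{equation*}
where $R_\xi:=(\partial_\xi u)\,u^{-1}$, evaluated at $(\chi_t,\phi_t,\psi_t)$, with $\xi$ ranging over $\set{\chi,\phi,\psi}$. Since $u\in\SU(2)$, each right-logarithmic derivative $R_\xi$ lies in $\su(2)$, i.e.\ is anti-Hermitian and traceless, hence carries exactly three real parameters; the right-hand side $-\icx H(t)$ likewise lies in $\su(2)$ (since $H(t)$ is Hermitian and traceless). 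This is the structural reason the parametrisation~\eqref{coord5} is well suited to the problem: matching the above matrix identity produces \emph{exactly three} independent real scalar equations, with the constraint~\eqref{coord2} automatically respected, in contrast to the redundant four-dimensional system~\eqref{coord3}. Decomposing both sides in the basis $\set{\icx\sigma^1,\icx\sigma^2,\icx\sigma^3}$ of $\su(2)$ then yields a linear system
\begin{equation*}
M(\chi_t,\phi_t)\begin{pmatrix}\6\chi_t\\ \6\phi_t\\ \6\psi_t\end{pmatrix}=\begin{pmatrix}\kappa X_t\\ \kappa Y_t\\ \tfrac12\end{pmatrix}\6t\;,
\end{equation*}
whose $3\times3$ coefficient matrix depends on $(\chi_t,\phi_t)$ only through $\sin\phi_t,\cos\phi_t,\sin 2\chi_t,\cos 2\chi_t$. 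Inverting $M$ and simplifying should reproduce~\eqref{coord6}; the denominators $\tan 2\chi$ and $\sin 2\chi$ there are the signature of $\det M$ vanishing proportionally to $\sin 2\chi$, i.e.\ of the chart degenerating at $\chi\in\set{0,\pi/2}$.

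To obtain the matrices $R_\xi$ with minimal labour, I would first record that~\eqref{coord5} factorises through one-parameter subgroups,
\begin{equation*}
u(\chi,\phi,\psi)=\e^{-\icx(\phi/2)\sigma^3}\,\e^{\icx\chi\sigma^2}\,\e^{-\icx\psi\sigma^3}\;,
\end{equation*}
from which $R_\phi=-\tfrac{\icx}{2}\sigma^3$ is constant, $R_\chi=\icx\,\e^{-\icx(\phi/2)\sigma^3}\,\sigma^2\,\e^{\icx(\phi/2)\sigma^3}$ depends on $\phi$ alone, and $R_\psi=-\icx\,\e^{-\icx(\phi/2)\sigma^3}\e^{\icx\chi\sigma^2}\,\sigma^3\,\e^{-\icx\chi\sigma^2}\e^{\icx(\phi/2)\sigma^3}$; each of the two adjoint actions is a planar rotation of the Pauli basis and is computed in one line. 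For the converse direction one runs the computation backwards: if $(\chi_t,\phi_t,\psi_t)$ solves~\eqref{coord6}, then $u(\chi_t,\phi_t,\psi_t)$ solves~\eqref{def6}, and uniqueness of the solution of~\eqref{def6} (Dyson's theorem, recalled above) together with $u$ being a local diffeomorphism identifies the two descriptions. The only real difficulty in all of this is bookkeeping: the signs in the adjoint actions and in the inversion of $M$ must be tracked carefully, working consistently from the matrix form of $H(t)$ rather than from its Pauli decomposition. There is no conceptual obstacle.
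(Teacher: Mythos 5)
Your proof is correct and takes a genuinely different route from the paper's. The paper's proof is essentially a one-liner: it invokes It\^o's formula for the change of variables and notes that the second-order correction term vanishes because the diffusion term $\6W_t$ only enters through the $Z$-equation while the change of variables is $Z$-independent. Your justification that no It\^o correction arises is equivalent but cast in pathwise terms: $H(t)$ has continuous sample paths, so $U_t$ is pathwise $C^1$ and the coordinate processes are of finite variation where the chart is regular. The more substantial difference is computational. Where the paper leaves the chain-rule computation to the reader, you organize it Lie-theoretically: you recognize~\eqref{coord5} as an Euler-angle factorization $u=\e^{-\icx(\phi/2)\sigma^3}\,\e^{\icx\chi\sigma^2}\,\e^{-\icx\psi\sigma^3}$ (which one checks reproduces~\eqref{coord5} exactly), rewrite~\eqref{def6} in right-translated form $(\6U_t)U_t^{-1}=-\icx H(t)\,\6t$, and compute the three right-logarithmic derivatives $R_\chi,R_\phi,R_\psi\in\su(2)$ by one-line adjoint actions of one-parameter subgroups. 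This is both cleaner and more illuminating than brute force: it makes transparent why the chart yields exactly three scalar equations, and the determinant $\det M=-\sin 2\chi$ cleanly accounts for the singular loci $\chi\in\set{0,\pi/2}$ and the denominators in~\eqref{coord6}. You stop at \lq\lq inverting $M$ should reproduce~\eqref{coord6}\rq\rq\ rather than writing out the last step, but you correctly flag the convention pitfall (work from the matrix form of $H$ rather than a fixed Pauli decomposition, where sign conventions for $\sigma^2$ can bite), and carrying out the inversion does reproduce~\eqref{coord6} exactly. The converse direction via Dyson uniqueness and local invertibility of $u$ is standard and valid.
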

\begin{proof}
This is a straightforward application of It\^o's formula. The second-order
term in the formula actually vanishes, due to the fact that the diffusion
term only enters indirectly, via the equation defining $Z_t$, and the
change of variables is independent of $Z$.
\end{proof}

Notice that the system~\eqref{coord6} has a skew-product structure, as the
right-hand side does not depend on $\psi_t$. Thus in fact all the
important information on the dynamics is contained in the first two
equations, while the evolution of $\psi_t$ is simply driven by
$(\chi_t,\phi_t)$ without any retroaction.

In the uncoupled case $\kappa=0$, we simply have $\dot\phi=1$ while
$\chi$ and $\psi$ are constant. Taking into account the initial
condition $U_0=\one$, we recover the fact that
\begin{equation}
U_t=
\begin{pmatrix}
\e^{-\icx t/2}&0\\0&\e^{\icx t/2}
\end{pmatrix} \;.
\end{equation}


\subsection{Invariant measure}
\label{ssec_resinv}

We first prove existence and uniqueness of an invariant measure for the
process $U_t$, under a rather general assumption on the noise $Z_t$.

\begin{assump}
\label{ass_Markov}
$\set{Z_t}_{t\geqs0}$ is a stationary, ergodic, real or
complex-valued Markov process on the (canonical) probability space
$(\Omega,\cF,\set{\cF_t}_{t\geqs0},\fP)$ of the Brownian motion
$\set{W_t}_{t\geqs0}$, defined by the It\^o SDE~\eqref{def3a},
admitting the unique invariant probability measure $\nu$.
\end{assump}

This assumption is satisfied if the drift term $f$ in the
SDE~\eqref{def3a} is sufficiently confining. For instance, in the
real case, it is sufficient to assume that $-Zf(Z)$ grows at least
quadratically as $\abs{Z}\to\infty$, and that $g(Z)$ satisfies some ellipticity
conditions. 

Recall that the Haar measure of a compact Lie group $G$ is the unique
probability measure $\mu$ on $G$ such that $\mu(gB)=\mu(B)$ for all Borel
sets $B\subset G$. A computation of the Jacobian of the
transformation~\eqref{coord5} shows that in our
system of coordinates, the Haar measure is given by 
\begin{equation}
\label{res1}
\mu(\6g) = \frac1{4\pi^2} \sin(2\chi) \,\6\chi\6\phi\6\psi\;.
\end{equation}

\begin{theorem}
\label{thm_res1}
Assume Assumption~\ref{ass_Markov} holds. Then for any $\kappa\neq0$ 
the product measure $\nu\otimes\mu$ is the unique invariant probability
measure of the process $(Z_t,U_t)$. 
\end{theorem}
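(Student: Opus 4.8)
The plan is to combine a soft invariance argument for existence with a hypoellipticity/controllability argument for uniqueness, exploiting the skew-product structure in~\eqref{coord6}. First I would check that $\nu\otimes\mu$ is indeed invariant. Since $Z_t$ is stationary with law $\nu$, it suffices to show that, conditionally on the whole path of $Z$, the Haar measure $\mu$ is invariant for the (now time-inhomogeneous, but random) flow on $\SU(2)$ generated by~\eqref{def6}. But for each realisation of the noise and each $t>s$ the map $U_s\mapsto U_t=U(t,s)U_s$ is left translation by the fixed random element $U(t,s)\in\SU(2)$, and left translations preserve the Haar measure by definition. Hence $\mu$ is preserved pathwise, and integrating over $\nu$ shows $\nu\otimes\mu$ is invariant for the Markov process $(Z_t,U_t)$. (Equivalently, one notes that the generator acting on functions of $U$ alone is a sum of right-invariant vector fields, which annihilate $\mu$.)

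For uniqueness I would argue that the process $(Z_t,U_t)$ is, in a suitable sense, irreducible, so that its invariant measure is unique. The natural tool is the support theorem of Stroock--Varadhan (cited in the keywords): the support of the law of the diffusion started from $(z,u)$ is the closure of the set of points reachable by the associated deterministic control system, obtained by replacing $\6W_t$ with a smooth control $\6W_t=\dot w(t)\,\6t$. Concretely, I would fix an arbitrary invariant probability measure $\pi$ of $(Z_t,U_t)$; its $Z$-marginal must be invariant for the $Z$-process alone, hence equals $\nu$ by Assumption~\ref{ass_Markov}. So it remains to control the conditional law on the $\SU(2)$-fibre. Using the controllability of~\eqref{coord6}: by steering $Z_t$ (which, being driven by a nondegenerate SDE, can be forced to follow essentially arbitrary smooth paths $X_t,Y_t$), the control terms $\kappa(X\sin\phi+Y\cos\phi)$ and $\kappa(X\cos\phi-Y\sin\phi)$ in the $\chi$- and $\phi$-equations can be made into two independent arbitrary controls as long as $\sin 2\chi\neq0$, i.e. away from the poles $\chi\in\{0,\pi/2\}$; together with the drift $\dot\phi\approx1$ this lets us reach any $(\chi,\phi)$, and then the $\psi$-equation is integrated along the way and can be adjusted too. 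Since $\kappa\neq0$ this shows the reachable set is all of $\SU(2)$, so the diffusion $(Z_t,U_t)$ has a unique minimal invariant set and, by standard Krylov--Bogolyubov plus irreducibility arguments (or directly: any two invariant measures are mutually equivalent, hence equal by ergodicity), $\nu\otimes\mu$ is the unique invariant probability measure.

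The step I expect to be the main obstacle is making the controllability argument fully rigorous, specifically two points: (i) showing one can indeed drive $Z_t$ along prescribed smooth paths — this needs the SDE~\eqref{def3a} to be genuinely elliptic (or at least that $g$ does not vanish), which is implicit in Assumption~\ref{ass_Markov} but should be invoked carefully, perhaps by first establishing a Stroock--Varadhan/Hörmander statement for the joint system $(Z_t,U_t)$ whose Lie-algebra-generated distribution is full rank; and (ii) handling the coordinate singularities at $\chi=0$ and $\chi=\pi/2$, where the parametrisation~\eqref{coord5} degenerates and the $\phi,\psi$ equations blow up — there one should either pass back to a nonsingular chart on $\SU(2)$ or check directly that the process started at a pole immediately leaves a neighbourhood of it (the $\chi$-drift is $\Order{\kappa}$ and fluctuates, so the poles are not absorbing). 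Once accessibility of all of $\SU(2)$ from any starting fibre point is established, uniqueness follows from general principles. An alternative, more spectral route — diagonalising the generator restricted to functions on $\SU(2)$ using representation theory, i.e. expanding in Wigner $D$-functions and showing the only invariant vector is the constant — would avoid the control-theoretic subtleties but requires more explicit computation; I would keep it in reserve and present the Stroock--Varadhan argument as the main line.
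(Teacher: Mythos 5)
Your proposal follows essentially the same route as the paper: a direct verification of invariance of $\nu\otimes\mu$, and uniqueness via the Stroock--Varadhan support theorem combined with a controllability argument for the deterministic control system associated to~\eqref{coord6}. You also correctly anticipate exactly the two points where the argument needs care, and the paper does indeed resolve them in the way you suggest: (i) controllability of $Z_t$ is taken as a consequence of Assumption~\ref{ass_Markov} and is used to reduce to a $3$-dimensional control problem on $\SU(2)$ with $X,Y$ as controls; and (ii) the coordinate singularities at $\chi\in\{0,\pi/2\}$ are handled by passing back to the non-singular Cartesian chart~\eqref{coord1} and recomputing the accessibility algebra there. The one piece you leave heuristic and the paper makes explicit is the Lie-bracket computation: with $b_0,b_1,b_2$ as in~\eqref{puniq2} one finds $[A_0,A_1]=A_2$, $[A_0,A_2]=-A_1$, $[A_1,A_2]=4A_0$ and $\det\{b_0,b_1,b_2\}=-1/\sin 2\chi$, which is what rigorously delivers full rank of the accessibility algebra away from the poles. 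For the invariance half you propose the softer ``left-translation preserves Haar measure'' argument (equivalently, the generator acts on $U$ through right-invariant vector fields that annihilate $\mu$), whereas the paper simply checks $L^*\rho=0$ directly in the $(\chi,\phi,\psi)$ coordinates using $\rho=\sin(2\chi)/4\pi^2$; both are correct, and yours is arguably more conceptual, though the paper's computation is short given that the coordinates have already been set up.
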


The proof is given in Section~\ref{sec_puniq}. The fact that
$\nu\otimes\mu$ is invariant is checked by a straightforward computation,
while its uniqueness is proved using a control argument and the
Stroock--Varadhan theorem. 

\begin{remark}
\label{rem_res1}
Introducing a variable $\rho=\sin^2(\chi)$ allows to rewrite the system
in such a way that the invariant density is uniform. We will use this fact
later on in the proof.
\end{remark}

Theorem~\ref{thm_res1} leads to the following observations in terms of the
physics of the model. Let $|\pm\rangle$ denote the canonical basis vectors
of $\cH$, i.e., the \lq\lq spin up\rq\rq\ and \lq\lq spin down\rq\rq\
states. Assume for instance that the system is prepared in the $|-\rangle$
state at time $0$. Measuring the spin at time $t$, we will get $-1/2$
with probability 
\begin{equation}
\label{res2}
\bigabs{\langle - | U_t | - \rangle}^2 = \cos^2\chi_t
= 1 - \rho_t
\end{equation}
and $+1/2$ with probability 
\begin{equation}
\label{res2b}
\bigabs{\langle + | U_t | - \rangle}^2 = \sin^2\chi_t
= \rho_t\;.
\end{equation}
We should beware not to mix two different notions of probability. The
expression~\eqref{res2} represents the probability of a quantum
measurement yielding a certain value, for a given realisation
$\omega\in\Omega$ of the noise. This is independent of the probability
distribution on path space of the stochastic process $Z_t$. If the process
$U_t$ were in the stationary state $\mu$, the expected probability of
measuring a spin $+1/2$ would be given by 
\begin{equation}
\label{res3}
\bigexpecin{\mu}{\sin^2\chi} = 
\int_0^{2\pi} \int_0^{2\pi} \int_0^{\pi/2} 
\sin^2\chi \frac1{4\pi^2}\sin(2\chi)
\,\6\chi\6\phi\6\psi = \frac12\;. 
\end{equation}
Also, we would have for any $b\in[0,1]$
\begin{equation}
\label{res3B}
\bigprobin{\mu}{\abs{\langle + | U_t | - \rangle}^2 \leqs b} = 
\int_0^{2\pi} \int_0^{2\pi} \int_{\setsuch{\chi}{\sin^2\chi\leqs b}}
\frac1{4\pi^2}\sin(2\chi)
\,\6\chi\6\phi\6\psi = b\;. 
\end{equation}
However, since by definition $U_0=\one$, the system cannot be in the
stationary state $\mu$ at any given finite time $t$. At best, it can
approach $\mu$ exponentially fast as time increases --- to ensure such a
behaviour, we have to show the existence of a spectral gap. We shall do
this under a more restrictive assumption on the noise term.


\subsection{Convergence to the invariant measure}
\label{ssec_rescon}

\begin{assump}
\label{ass_OU}
The process $\set{Z_t}_{t\geqs0}$ is the real-valued
stationary Ornstein--Uhlenbeck process defined by the SDE~\eqref{def3}. 
In other words, 
\begin{equation}
\label{res4}
Z_t = Z_0 \e^{-\gamma t} + \sigma \int_0^t \e^{-\gamma(t-s)} \6W_s\;,
\end{equation}
where $Z_0$ is a centred, Gaussian random variable, of variance
$\sigma^2/2\gamma$, which is independent of the Brownian motion
$\set{W_t}_{t\geqs0}$. 
\end{assump}

Then the invariant measure $\nu$ is also Gaussian,
centred, of variance $\sigma^2/2\gamma$. The parameter $\sigma^2$ can be
interpreted as the temperature of the heat bath creating the noise, while
$\gamma$ represents the rate of decay of correlations in the bath.

The assumption that $Z_t$ be real is not essential, but it simplifies the
notations. In effect, we have to study the SDE 
\begin{equation}
\label{res5}
\begin{split}
\6Z_t &= -\gamma Z_t\,\6t + \sigma\6W_t\;,\\
\6\chi_t &= \kappa Z_t\,\sin\phi_t\,\6t\;,\\
\6\phi_t &= \biggpar{1 + \frac{2\kappa Z_t\,\cos\phi_t}{\tan 2\chi_t}}
\6t\;,\\
\6\psi_t &= -\frac{\kappa Z_t\,\cos\phi_t}{\sin 2\chi_t}\,\6t\;,
\end{split}
\end{equation}
with initial condition $(\chi_0,\phi_0)=(0,0)$. The last equation, for
$\6\psi_t$, is not really important for the dynamics, and has no
measurable effect on physics either, $\psi_t$ having only an influence on
the phase of matrix elements of $U_t$.

With the process $X_t=(Z_t,\chi_t,\phi_t,\psi_t)$ we associate in the usual way
the Markov semigroup $T_t:\varphi(\cdot)\mapsto
\econd{\varphi(X_t)}{X_0=\cdot}$. Its infinitesimal generator is the
differential operator 
\begin{equation}
\label{res6}
L = L_Z + \kappa Z\,\sin\phi \dpar{}{\chi} 
+ \biggpar{1 + \frac{2\kappa Z\,\cos\phi}{\tan 2\chi}} \dpar{}{\phi} 
- \frac{\kappa Z\,\cos\phi}{\sin 2\chi} \dpar{}{\psi}\;,
\end{equation}
where 
\begin{equation}
\label{res7}
L_Z = -\gamma Z \dpar{}{Z} + \frac{\sigma^2}{2} \dpar{^2}{Z^2}
\end{equation}
is the generator of the Ornstein--Uhlenbeck process. The invariant measure
$\nu\otimes\mu$ is an eigenfunction of the adjoint $L^*$ of $L$, with eigenvalue
$0$. It is known that $L_Z$ has discrete spectrum, with real, nonpositive
eigenvalues $-n\gamma$, $n\in\N_0$ (the eigenfunctions are Hermite polynomials).
Since the other variables $\chi$ and $\phi$ live in a compact set, $L$ also has
discrete spectrum. Moreover, Theorem~\ref{thm_res1} shows that $0$ is a simple
eigenvalue of $L$. 

\begin{theorem}
\label{thm_res3}
Assume that Assumption~\ref{ass_OU} holds. Then for sufficiently small
$\kappa\sigma$, 
there exists a constant $c$, independent of $\gamma$, $\kappa$ and
$\sigma$, such that all eigenvalues of $L$ except $0$ have a real part
bounded above by\footnote{For two real numbers $a,b$, $a\vee b$ denotes the
maximum of $a$ and $b$ and $a\wedge b$ denotes the minimum of $a$ and $b$.}
\begin{equation}
\label{res7B}
-\frac{c}{T_{\gamma,\kappa\sigma}} 
\qquad
\text{where }
T_{\gamma,\kappa\sigma}
=\frac{1+\gamma^2}{(\kappa\sigma)^2}\vee\frac1{\gamma}\;.
\end{equation}
\end{theorem}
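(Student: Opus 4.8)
The plan is to prove the spectral gap estimate by a two-step strategy: first establish a gap at $\kappa\sigma = 0$ for the "unperturbed" operator (where the $\chi,\phi$ dynamics decouples into pure rotation $\dot\phi = 1$), then show the gap persists under perturbation via analytic perturbation theory, being careful to track how the gap degrades as $\gamma \to \infty$ and $\gamma \to 0$. Since $\psi$ enters neither the dynamics nor any physical observable, I would first reduce to the operator on the $(Z,\chi,\phi)$ variables, and use the substitution $\rho = \sin^2\chi$ from Remark~\ref{rem_res1} so that the invariant measure becomes a flat density $\dd\rho\,\dd\phi$ on $[0,1]\times\mathbb{T}$ times the Gaussian $\nu(\dd Z)$; this makes $L^*$ easier to handle on $L^2(\nu\otimes\mu)$. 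The natural functional-analytic setting is the Hilbert space $L^2$ of this invariant measure, where $L_Z$ is self-adjoint with spectrum $\{-n\gamma\}$ and the transport part is skew-adjoint at $\kappa=0$.

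\medskip

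Concretely, I would write $L = A + \kappa\sigma B$ after rescaling: $A = L_Z + \partial/\partial\phi$ (noting $L_Z$ depends on $\sigma,\gamma$ only through $\gamma$ once we normalize the OU variance, or more precisely rescale $Z \mapsto Z/\sigma$ so that the relevant small parameter is genuinely $\kappa\sigma$ with $L_Z$ becoming $-\gamma Z\partial_Z + \tfrac12\partial_Z^2$), and $B$ the transport vector field carrying the $X_t$-dependence. The first key step is to analyze $A$: its spectrum is $\{-n\gamma + \icx m : n\in\mathbb{N}_0, m\in\mathbb{Z}\}$, so $0$ is simple but the rest of the spectrum only has real part $\leqs -\gamma$ when $m=0$, and touches the imaginary axis along the whole line $\icx\mathbb{Z}$ (from the $n=0$ modes, i.e. functions independent of $Z$). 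This is the crux: the unperturbed operator has no spectral gap at all — the entire imaginary axis $\icx\mathbb{Z}$ is spectrum. So a naive perturbation argument fails, and one must show that the coupling $\kappa\sigma B$ actually \emph{pushes} these neutral modes into the left half-plane. This is a Fredholm-alternative / second-order perturbation computation: restrict to the (infinite-dimensional) eigenspace of $A$ at $\icx m$, which consists of $Z$-independent functions $\e^{\icx m\phi} h(\rho)$, and compute the effective operator $-\kappa^2\sigma^2 P_m B (A - \icx m)^{-1} (1-P_m) B P_m$ obtained by eliminating the $Z$-dependent modes (which cost a resolvent gain of order $1/\gamma$ when $\gamma$ is not too small, but only order $1/(\text{distance to }\icx\mathbb{Z})$, i.e.\ one must handle the near-resonances $\icx(m\pm 1)$ carefully). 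The resulting effective one-dimensional operator in $\rho$ should be (a multiple of) a Jacobi-type diffusion generator with a spectral gap of order one on $L^2(\dd\rho)$, hence genuine eigenvalues with real part $\asymp -\kappa^2\sigma^2/\gamma$ — and one also has to retain the crude bound that $Z$-dependent modes sit at real part $\leqs -\gamma$. Combining, the worst eigenvalue has real part $\lesssim -\min(\kappa^2\sigma^2/\gamma,\ \gamma) \cdot(\text{bounded factor})$, but the factor $1+\gamma^2$ in the denominator of $T_{\gamma,\kappa\sigma}$ signals that for large $\gamma$ the near-resonance denominators $(\icx(m\pm1) - (-n\gamma+\icx m))^{-1}$ are of size $1/\sqrt{1+\gamma^2}$... so more care: the effective gap is of order $(\kappa\sigma)^2/(1+\gamma^2)$ from the resonant-mode analysis, giving exactly $-c/T_{\gamma,\kappa\sigma}$ after taking the max with $-\gamma$ for the $Z$-modes (reciprocated).

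\medskip

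In order: (i) reduce to $(Z,\chi,\phi)$, apply $\rho=\sin^2\chi$, rescale $Z$ so the small parameter is $\kappa\sigma$ and $L = A + (\kappa\sigma)B$ with $A$, $B$ $\gamma$-dependent in a controlled way; (ii) diagonalize $A$ via Hermite $\times$ Fourier-in-$\phi$, identify spectrum and the neutral subspace $\mathcal{N} = \bigoplus_m \C\e^{\icx m\phi}\otimes L^2(\dd\rho)$; (iii) verify $B$ maps $\mathcal{N}$ into the range of $Z$-dependent and $\phi$-shifted modes, set up the Schur complement / Feshbach reduction to get an effective operator $L_{\mathrm{eff}}(\kappa\sigma)$ on $\mathcal{N}$; (iv) compute $L_{\mathrm{eff}}$ to leading order $(\kappa\sigma)^2$, recognize it as $(\kappa\sigma)^2/(1+\gamma^2)$ times a Jacobi diffusion on $[0,1]$ with explicit spectral gap, control the error terms; (v) separately bound the $Z$-dependent part of the spectrum by $\leqs -\gamma + \Order{\kappa\sigma}$; (vi) assemble, and quantify "sufficiently small $\kappa\sigma$" so the perturbative bounds close, noting $c$ is extracted from the Jacobi gap and is $\gamma,\kappa,\sigma$-independent. \textbf{The main obstacle} I anticipate is step (iii)–(iv): making the Feshbach/averaging reduction rigorous in this non-self-adjoint, infinite-dimensional setting — in particular controlling the near-resonant terms $m \leftrightarrow m\pm 1$ (which generate the $1+\gamma^2$ rather than $\gamma^2$ in $T_{\gamma,\kappa\sigma}$) and verifying that the effective operator is genuinely dissipative (strictly negative real part off the kernel) rather than merely skew-adjoint — plus handling the singular coefficients $1/\tan 2\chi$, $1/\sin 2\chi$ near $\chi = 0,\pi/2$ (i.e. $\rho = 0,1$), which is where the weighted-space / hypocoercivity estimates must be set up with the correct boundary behaviour so that $B$ and the resolvents are well-defined as operators on the chosen $L^2$ space.
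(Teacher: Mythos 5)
Your plan coincides with the paper's proof in all its essential features: the paper also works in the variable $y=2\sin^2\chi-1$ (equivalently your $\rho$, just shifted), decomposes the generator as $L=L_0+\icx\kappa L_1$ with $L_0=L_Z+\partial_\phi$, diagonalises $L_0$ in a Hermite-in-$Z$ $\times$ Fourier-in-$\phi$ basis to get $\lambda^0_{n,k}=-n\gamma+\icx k$ (so the whole line $\icx\Z$ is spectrum at $\kappa=0$, exactly as you observe), notes that $L_1$ only connects $n=0$ to $n=1$ and $k$ to $k\pm1$, and extracts the gap from the resolvent denominators $(\gamma\mp\icx)^{-1}$, whose real parts $\gamma/(1+\gamma^2)$ combined with the Hermite normalisation $|h_1|\propto\sqrt{\gamma}/\sigma$ give exactly $(\kappa\sigma)^2/(1+\gamma^2)$. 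So this is the same route, not a different one.

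Where you diverge is in how the key step — the one you yourself flag as the main obstacle, namely verifying that the second-order effective operator on the neutral subspace is \emph{strictly} dissipative off the kernel — is actually closed. You propose to recognise the Feshbach-reduced operator as a Jacobi-type diffusion on $[0,1]$ with a known gap. The paper does not do this: instead it writes the diagonal second-order Rayleigh--Schr\"odinger correction
\[
\re\lambda_{0,p,k,r}=-\frac{(\kappa\sigma)^2}{2(1+\gamma^2)}\sum_q\Bigl[\abs{a_{q,p}+b_{q,p}(k,r)}^2+\abs{a_{q,p}-b_{q,p}(k,r)}^2\Bigr]+\Order{\kappa^3},
\]
with $a_{q,p}=\int\overline{f_q}\sqrt{1-y^2}f'_p\,\6y$ and $b_{q,p}=\int\overline{f_q}\,(ky+\tfrac12 r)(1-y^2)^{-1/2}f_p\,\6y$, and shows the sum cannot vanish by an elementary orthogonality argument: for $(k,r)\neq(0,0)$ the function $(ky+\tfrac12 r)f_p/\sqrt{1-y^2}$ cannot be orthogonal to the whole basis, and for $(k,r)=(0,0)$ vanishing of all $a_{q,p}$ forces $f_p$ constant, i.e.\ the zero mode. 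That replaces the need to identify and analyse an effective Jacobi operator; it is shorter but yields no lower bound on the gap, only positivity plus the prefactor $(\kappa\sigma)^2/(1+\gamma^2)$. Two further minor points: your heuristic that the denominators are ``of size $1/\sqrt{1+\gamma^2}$'' is slightly off — what matters is the real part $\gamma/(1+\gamma^2)$, which cancels a factor $\gamma$ from the Hermite normalisation to give $1/(1+\gamma^2)$; and note the paper keeps $\psi$ (it contributes the $\tfrac12 r$ term in $b_{q,p}$ and is needed to rule out spurious kernel elements), so dropping it requires a separate argument that no purely-$\psi$-dependent eigenfunction sits at $0$. Finally, the concern you raise about rigour of the infinite-dimensional degenerate perturbation theory applies equally to the paper's RS computation; the paper does not address it beyond noting that the relevant denominators $\lambda^0_{0,k}-\lambda^0_{1,k\pm1}=\gamma\mp\icx$ are bounded away from zero.
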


The proof is given in Section~\ref{sec_pconv}. It relies on standard
second-order perturbation theory.

Theorem~\ref{thm_res3} shows that the distribution of the actual process
$U_t$, starting with initial condition $\one$, converges exponentially
fast to $\mu$, with rate $c/T_{\gamma,\kappa\sigma}$.
Thus the expected (quantum) probability to measure a value $+1/2$ for the spin,
when it starts in the \lq\lq down\rq\rq\ state satisfies 
\begin{equation}
\label{res8}
\bigexpecin{\delta_\one}{\sin^2\chi_t} = 
\frac12 \bigbrak{1 - \Order{\e^{-ct/T_{\gamma,\kappa\sigma}}}}\;. 
\end{equation}
Hence for times $t\gg T_{\gamma,\kappa\sigma}$, one can expect that
a measurement of the spin will yield $-1/2$ or $+1/2$ with probability
$1/2$ --- provided the result is averaged over many realisations of the
noise.

Regarding the dependence of the result on the correlation decay rate $\gamma$,
we can distinguish two asymptotic regimes:
\begin{itemiz}
\item	For $\gamma\ll1$, the correlations in the Ornstein--Uhlenbeck
process decay very slowly, in fact $Z_t$ resembles a Brownian motion. As a
result, the relaxation time $T_{\gamma,\kappa\sigma}$ becomes very large.
\item	For $\gamma\gg\kappa\sigma$, the correlations in the
Ornstein--Uhlenbeck process decay quickly, so that $Z_t$ resembles white noise.
Then the relaxation time $T_{\gamma,\kappa\sigma}$ also becomes large.
\end{itemiz}
Relaxation to equilibrium is fastest when $\gamma\simeq(\kappa\sigma)^2$. Then
$T_{\gamma,\kappa\sigma}$ has order $1/(\kappa\sigma)^2$.


\subsection{Diffusion approximation for the transition probability}
\label{ssec_reseff}

The probability of measuring a \lq\lq spin up\rq\rq\ at time $t$, if the
system is prepared in the \lq\lq spin down\rq\rq\ state at time $0$, for a
given realisation $\omega$ of the noise, is given by
\begin{equation}
\label{reseff0}
\rho_t(\omega) = \abs{\langle - | U_t(\omega) | + \rangle}^2 
= \sin^2(\chi_t(\omega))\;.
\end{equation}
While the spectral gap result Theorem~\ref{thm_res3} yields a control on the
average of $\rho_t$ over many realisations $\omega$ of the noise, it does not
describe its time-evolution very precisely. In this section we provide a more
precise description of the dynamics by giving pathwise estimates on
$\rho_t(\omega)$, in the form of first-passage times. This is done by obtaining
an approximately closed effective equation for $\rho_t$, which is the only
physically measurable quantity in the system. The methods used are partly
adapted from those presented in~\cite{BGbook}.

\begin{figure}
\centerline{\includegraphics*[clip=true,height=60mm]{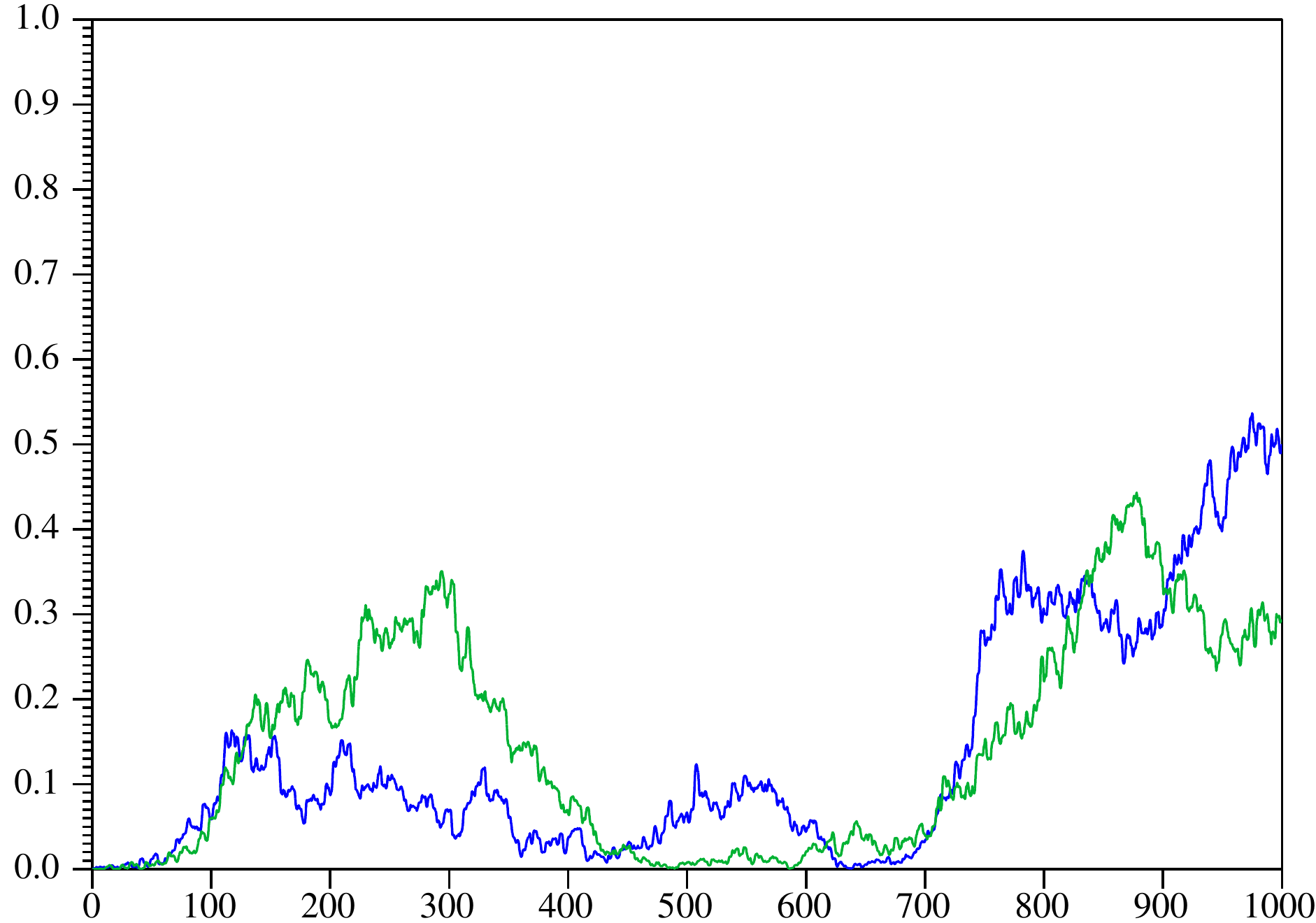}}
\caption[]{The probability $\abs{\langle - | U_t | +
\rangle}^2 = \sin^2(\chi_t)$ of measuring a transition from spin $-1/2$ to
$+1/2$ as a function of time $t$, for two different realisations of the noise.
Parameter values are $\gamma=1$, $\kappa=1$ and $\sigma=0.03$, so that the
relaxation time $T_{\gamma,\kappa\sigma}$ to the invariant measure is of order
$2000$.}
\label{fig1}
\end{figure}

\figref{fig1} shows two sample paths $t\mapsto\rho_t(\omega)$,
obtained for two different realisations $W_t(\omega)$ of the Brownian
motion driving $Z_t$. For the parameter values
$\kappa=\gamma=1$ and $\sigma=0.03$ used in \figref{fig1}, this probability
remains larger than $1/2$ for all times up to $1000$. Hence the system is
still far from its invariant measure.

\begin{figure}
\centerline{\includegraphics*[clip=true,height=60mm]{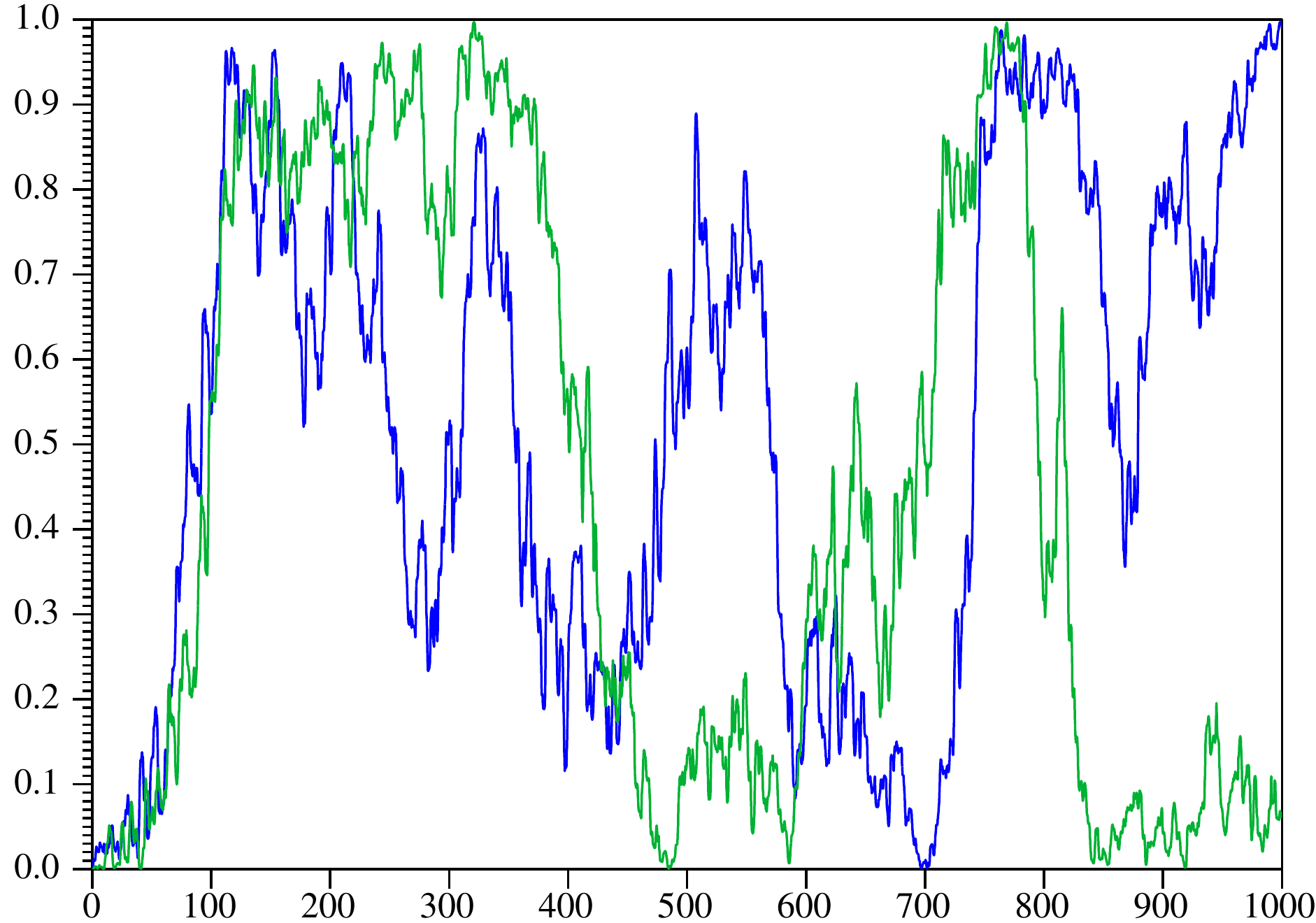}}
\caption[]{The probability $\abs{\langle - | U_t | +
\rangle}^2 = \sin^2(\chi_t)$ of measuring a transition from spin $-1/2$ to
$+1/2$ as a function of time $t$, for two different realisations of the noise.
Parameter values are $\gamma=1$, $\kappa=1$ and $\sigma=0.1$, so that the
relaxation time $T_{\gamma,\kappa\sigma}$ to the invariant measure is of order
$200$.}
\label{fig2}
\end{figure}

\figref{fig2} shows two sample paths $t\mapsto\rho_t(\omega)$ in a
case with larger noise intensity $\sigma=0.1$. Now the sample paths have
enough time to explore all of phase space, indicating that the system has
reached equilibrium. Note however that for any given realisation of the
noise, a spin measurement made after a sufficiently long time may still
yield $-1/2$ with any probability. Only by making repeated measurements in
the course of time would one obtain an average probability close to $1/2$
--- assuming that the measurements do not affect the state of the system,
which of course they do. 

Let us now describe the derivation of the effective equation for $\rho_t$. 
Recall that the uniform measure is invariant when the system is written in
the variables $(\rho,\phi,\psi)$. In order to exploit symmetries of the
problem, it is convenient to use the variable $y = 2\rho-1 =
-\cos2\chi\in[-1,1]$ instead of $\rho$. We are led to consider the SDE on
$\R\times[-1,1]\times\fS^1$
\begin{equation}
\label{reseff1}
\begin{split}
\6Z_t &= -\gamma Z_t\,\6t + \sigma\,\6W_t\;,\\
\6y_t &= 2\kappa Z_t\,\sqrt{1-y_t^2}\sin\phi_t\,\6t\;,\\
\6\phi_t &= \biggbrak{1 - 2\kappa Z_t\,\frac{y_t}{\sqrt{1-y_t^2}}
\cos\phi_t}  \6t\;.
\end{split}
\end{equation}
Observe that if $\kappa$ (or $\sigma$) is small, then this system displays
several distinct timescales. While $\phi_t$ will be close to $t$, $y_t$ can
at best grow like $\kappa t$. In fact, if $Z$ were constant, one can see that
the system~\eqref{reseff1} restricted to $(y,\phi)$ is conservative, that is,
there exists a constant of motion $K(y,\phi)=y+\Order{\kappa}$. The fact that
$Z_t$ is time-dependent actually destroys this invariance, but in a very soft
way. 

The usual way to obtain an effective diffusion equation for $y$ (or for the
constant of motion $K(y,\phi)$) is to average the right-hand side
of~\eqref{reseff1} over the fast variable $\phi$~\cite{FreidlinWeber04}.
However, in the present case this average is zero, so that one has to go beyond
the usual averaging procedure. 

It turns out that one can construct a new variable $\bar y=
y+\kappa w(Z,y,\phi)$ such that 
\begin{equation}
\label{reseff2}
\6\bar y_t \simeq -\frac{4\sigma^2\gamma}{1+\gamma^2} Z_t^2 \bar y_t \,\6t 
+ \frac{2\kappa\sigma}{\sqrt{1+\gamma^2}}\sqrt{1-\bar y_t^2}
\,\cos(\phi_t+\theta) \,\6W_t\;,
\end{equation}
where $\theta$ is a constant phase shift (see Proposition~\ref{prop_avrg} for
a precise formulation). The variance of the noise term grows like $t$ times the
square of the coefficient of $\6W_t$. Since $\phi_t$ rotates rapidly, we expect
that $\cos^2(\phi_t+\theta)$ can be approximated by $1/2$. In addition, $Z_t^2$
is rapidly fluctuating with average $\sigma^2/2\gamma$. We thus expect that 
\begin{equation}
\label{reseff3}
\6\bar y_t \simeq -\frac{2(\kappa\sigma)^2}{1+\gamma^2} \bar y_t
\,\6t 
+ \sqrt{\frac{2(\kappa\sigma)^2}{1+\gamma^2}}\sqrt{1-\bar y_t^2}
\,\6W_t\;,
\end{equation}
which, after rescaling time by a factor $T_{\gamma,\kappa\sigma}$ reduces to 
\begin{equation}
\label{reseff4}
\6\bar y_t \simeq - \bar y_t\,\6t 
+ \sqrt{1-\bar y_t^2}\,\6W_t\;.
\end{equation}
The sample paths of this system explore phase space in a rescaled time of order
$1$. 

The approximation~\eqref{reseff3} is of course not rigorous. But based on the
above intuition, one can prove the following result on the time needed for
$y_t$ to explore phase space. 

\begin{theorem}
\label{thm_res2}
Assume that Assumption~\ref{ass_OU} holds. Let 
\begin{equation}
\label{res5A}
\tau(y) = \inf\setsuch{t>0}{y_t>y} 
\end{equation}
be the random first-passage time of $y_t$ at the value $y$. Then
there is a function $e(y)$, independent of $\gamma$, $\sigma$ and
$\kappa$ and bounded for $y<1$, such that 
\begin{equation}
\label{res5B}
\bigexpec{\tau(y)} \leqs e(y) T_{\gamma,\kappa\sigma}
= e(y) 
\biggbrak{\frac{1+\gamma^2}{(\kappa\sigma)^2}\vee \frac1\gamma}\;.
\end{equation}
Furthermore, $\prob{\tau(y)>t}$ decays exponentially with rate of order $1/e(y)
T_{\gamma,\kappa\sigma}$. 
\end{theorem}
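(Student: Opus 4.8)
The plan is to reduce the statement to a first-passage estimate for the one-dimensional reduced process $\bar y_t$, using the change of variables announced in~\eqref{reseff2}, and then to control the error between $y_t$ and $\bar y_t$ and between $\bar y_t$ and the effective diffusion~\eqref{reseff3}--\eqref{reseff4}. First I would make Proposition~\ref{prop_avrg} precise: construct the corrector $w(Z,y,\phi)$ of order $\kappa$ so that $\bar y=y+\kappa w$ satisfies~\eqref{reseff2} with an explicitly controlled remainder. The corrector is obtained by solving, along the unperturbed flow $\dot\phi=1$, the cell problem that cancels the order-$\kappa$ oscillating drift in~\eqref{reseff1}; since $\phi$ lives on a circle and the offending term has zero $\phi$-average (which is exactly why naive averaging fails and one must go to second order), this ODE has a bounded periodic solution. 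One then has to check that $w$ and its relevant derivatives are bounded uniformly for $y$ in compact subsets of $(-1,1)$, so that $\bar y$ and $y$ stay comparably close, i.e.\ $|\bar y_t-y_t|\leqs C\kappa$, and in particular $\tau(y)$ and the hitting time $\bar\tau(y)=\inf\set{t:\bar y_t>y}$ differ in a way that only shifts $y$ by $\Order{\kappa}$.

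Next I would handle the two replacements hidden in passing from~\eqref{reseff2} to~\eqref{reseff3}: replacing $Z_t^2$ by its stationary mean $\sigma^2/2\gamma$ in the drift, and replacing $\cos^2(\phi_t+\theta)$ by $1/2$ in the effective diffusion coefficient. Rather than justifying these as genuine limits, the cleaner route is to bound $\expec{\tau(y)}$ directly by a Lyapunov/Dynkin argument on the full process $X_t=(Z_t,y_t,\phi_t)$. Concretely, I would look for a function $V(Z,y,\phi)$, essentially $V=g(\bar y)$ for a suitable increasing $g$ (a rescaled version of the natural scale function of~\eqref{reseff4}, perhaps with a small $\phi$- and $Z$-dependent correction), such that $LV\leqs -1/(c'\,T_{\gamma,\kappa\sigma})$ on the region $\set{y\leqs y_0}$ for any fixed $y_0<1$. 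Then Dynkin's formula gives $\expec{\tau(y_0)}\leqs c'\,T_{\gamma,\kappa\sigma}\,(V_{\max}-V_{\min})=e(y_0)T_{\gamma,\kappa\sigma}$, with $e(y_0)$ depending only on $y_0$ through $g$, because all the $\gamma,\kappa,\sigma$ dependence has been pushed into the single scale factor $T_{\gamma,\kappa\sigma}$ — this is precisely the content of the rescaling~\eqref{reseff3}$\to$\eqref{reseff4}. Getting the supermartingale inequality to hold with the right uniform constant, using the contraction in the drift of~\eqref{reseff2} (the $-4\sigma^2\gamma Z_t^2\bar y_t/(1+\gamma^2)$ term, which after accounting for the two branches of the $\vee$ in $T_{\gamma,\kappa\sigma}$ gives a drift of order $-1/T_{\gamma,\kappa\sigma}$ towards $0$) against the diffusion of order $\sqrt{1/T_{\gamma,\kappa\sigma}}$, is the computational heart of the proof.

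For the exponential tail $\prob{\tau(y)>t}$, I would iterate the moment bound in the standard way: the estimate $\expec{\tau(y)}\leqs e(y)T_{\gamma,\kappa\sigma}$ together with the Markov property and Chebyshev gives $\prob{\tau(y)>e(y)T_{\gamma,\kappa\sigma}(1+s)}$ small uniformly in the starting point (within $\set{y_0\leqs y}$, after restarting), and chaining these over successive time windows of length $\Order{e(y)T_{\gamma,\kappa\sigma}}$ yields geometric decay, hence exponential decay with rate of order $1/(e(y)T_{\gamma,\kappa\sigma})$. A small technical point to dispatch along the way is that the coordinates $(\chi,\phi)$ become singular where $\sin 2\chi=0$, i.e.\ at $y=\pm1$; since we only ever ask for $\tau(y)$ with $y<1$ and start at $y_0=-1$, I would note that $y_t$ cannot reach $\pm1$ in finite time (the diffusion coefficient $\sqrt{1-y_t^2}$ vanishes there and the process is really living on $\SU(2)$, where nothing singular happens), so the argument stays inside a region where all the correctors and Lyapunov functions are smooth and bounded.

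The main obstacle I anticipate is the second step: producing a single Lyapunov function on the three-dimensional space whose generator-image is negative with a constant that is genuinely uniform in $(\gamma,\kappa,\sigma)$ after factoring out $T_{\gamma,\kappa\sigma}$. The difficulty is that the ``good'' contraction only appears after the near-identity change of variables $y\mapsto\bar y$ and after exploiting cancellations over a full rotation of $\phi$; a function depending on $\bar y$ alone will have an $LV$ that still contains oscillating $\Order{\kappa}$ and fluctuating $Z^2$ terms, and one must show these average out over the $\Order{T_{\gamma,\kappa\sigma}}$ timescale without destroying the sign. Handling the two regimes $\gamma\ll 1$ (where $T\sim 1/\gamma$ and $Z_t$ is slowly mixing) and $\gamma\gg\kappa\sigma$ (where $T\sim\gamma^2/(\kappa\sigma)^2$) may require slightly different choices of corrector, and reconciling them into one clean bound valid for all small $\kappa\sigma$ is where the bookkeeping will be heaviest.
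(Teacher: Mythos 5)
Your first step (making Proposition~\ref{prop_avrg} precise by constructing the corrector $w$ and passing to $\bar y = y + \kappa w$) matches the paper exactly. From there, however, you diverge sharply: you propose to bound $\expec{\tau(y)}$ by a single Lyapunov/Dynkin argument, seeking $V(Z,y,\phi)$ with $LV \leqs -1/(c'T_{\gamma,\kappa\sigma})$ on $\set{y\leqs y_0}$, whereas the paper instead uses the comparison principle to dominate $\bar y_t$ from below by an explicit linear SDE whose first-passage distribution is controlled via Andr\'e's reflection principle and Gaussian tail estimates, and then patches together estimates in different $y$-regions using renewal equations solved by Laplace transforms (Propositions~\ref{prop_tau01}--\ref{prop_tau04}, Lemmas~\ref{lem_tau1}--\ref{lem_tau2}, Propositions~\ref{prop_renewal1}--\ref{prop_renewal2}).

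The gap in your proposal is precisely the obstacle you flag at the end, and it is not merely heavy bookkeeping. After the change of variables, the drift in~\eqref{avrg03} is proportional to $Z_t^2$ and the diffusion in~\eqref{avrg04} to $\cos(\phi_t+\theta)$; both vanish on sets of positive measure in $(Z,\phi)$, so for any $V$ depending on $\bar y$ alone, $LV$ will fail to be pointwise negative. Adding $Z$- and $\phi$-dependent correctors to $V$ turns this into a genuine two-scale homogenization problem (a hierarchy of cell problems with a solvability condition at order $\kappa^2$), and you give no indication of how to close that hierarchy uniformly in the two regimes $\gamma\ll 1$ and $\gamma\gg\kappa\sigma$. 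Dynkin's formula requires a pointwise sign, not an averaged one, so the naive argument simply does not yield the bound. The paper sidesteps this entirely: the comparison/reflection route automatically time-averages the fluctuations by tracking the variance $v(t)$ of the dominated process (Lemma~\ref{lem_tau1} and the bound~\eqref{tau11:4}), and the renewal machinery handles the fact that different mechanisms dominate near $y=-1$ (drift escape, Proposition~\ref{prop_tau01}), in the bulk (diffusive spreading, Propositions~\ref{prop_tau02}--\ref{prop_tau03}), and near $y=1$. You also omit the need to control the event $\abs{Z_t}>1$, which the paper treats separately in Proposition~\ref{prop_tau04} and the final proof of Theorem~\ref{thm_res2}; your assertion that $y_t$ cannot reach $\pm 1$ in finite time is not established in the paper (and is not what is needed — the paper instead shows $y=1$ is too small a set to hit, whence $e(y)$ may diverge as $y\nearrow 1$). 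Your argument for the exponential tail (chaining over windows of length $\Order{e(y)T_{\gamma,\kappa\sigma}}$) is sound and is essentially the paper's Lemma~\ref{lem_tau0}, but it requires the uniform expectation bound that the missing Lyapunov step was supposed to provide.
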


The proof is given in Section~\ref{sec_ptrans}. It uses comparison inequalities
for the stochastic differential equation satisfied by $\bar y_t$. The main
difficulty is that one obtains different approximations in different regions of
phase space, which have then to be patched together. This is done with the help
of renewal equations, solved by Laplace transforms.

The time $T_{\gamma,\kappa\sigma}$
plays the r\^ole of a typical exploration time. It has value $2000$
in~\figref{fig1} and $200$ in~\figref{fig2}. The function $e(y)$ may
diverge as $y\nearrow1$. This is due to the fact that $y=1$ (i.e., $\chi=\pi/2$)
corresponds to a single curve in $\SU(2)$, depending only on $\phi/2-\psi$,
a too small set to hit.

\begin{remark}\hfill
\begin{enum}
\item	Theorem~\ref{thm_res2} does not follow from Theorem~\ref{thm_res3}. For
instance, the Ornstein--Uhlenbeck process $Z_t$ has a spectral gap $\gamma$,
but the process takes exponentially long times of order $\e^{\gamma h/\sigma^2}$
to reach values of order $h$. 
\item	Conversely, Theorem~\ref{thm_res3} partially follows from
Theorem~\ref{thm_res2}, namely for real eigenvalues. Indeed, given a subset
$\cD$ of phase space, consider the following boundary value problem:
\begin{align}
\nonumber
&& -L\varphi &= \lambda\varphi &&\text{for $x\in \cD$\;,} &\\
&& \varphi &= g &&\text{for $x\in\partial \cD$\;.} &
\label{DV1}
\end{align}
For Dirichlet boundary conditions $g=0$, it is known that the eigenvalue
$\bar\lambda(\cD)$ of smallest real part of this problem is real and
simple~\cite{Jentzsch1912,ProtterWeinberger72}. A classical result due to
Donsker and Varadhan~\cite{DonskerVaradhan76} states that 
\begin{equation}
\label{DV2}
\bar\lambda(\cD) \geqs \frac1{\sup_{x\in D}{\expecin{x}{\tau_\cD}}}\;,
\end{equation}
where $\tau_\cD=\inf\setsuch{t>0}{x_t\not\in\cD}$ is the first-exit time from
$\cD$. Now if $\lambda<\bar\lambda(\cD)$, the boundary value problem~\eqref{DV1}
has a unique solution for every boundary condition $g$, given by 
\begin{equation}
\label{DV3}
\varphi(x) = \bigexpecin{x}{\e^{\lambda\tau_\cD} g(x_{\tau_\cD})}\;.
\end{equation}
Any eigenfunction of $L$ corresponding to a nonzero real eigenvalue must change
sign. Thus taking $\cD=\setsuch{x}{\varphi(x)>0}$ would yield $g=0$, and thus
also $\varphi(x)=0 \;\forall x\in\cD$, a contradiction. Hence $L$ cannot have
any real eigenvalue smaller than $\bar\lambda(\cD)$, except $0$. 
\end{enum}
\end{remark}


\section{Existence and uniqueness of the invariant measure}
\label{sec_puniq}


\subsection{Invariance of the Haar measure}
\label{ssec_Haar}

With the semigroup $T_t$ of the Markov process, we associate the dual semigroup
$S_t$, acting on $\sigma$-finite measures~$\mu$ according to
\begin{equation}
\label{Haar1}
(S_t\mu)(B) = \int \bigpcond{X_t\in B}{X_0=x} \,\mu(\6x) 
\bydef \bigprobin{\mu}{X_t\in B}\;.
\end{equation}
Its infinitesimal generator is the adjoint $L^*$ of the generator $L$. If $\mu$
has density $\rho$ with respect to the Lebesgue measure, then 
\begin{equation}
\label{Haar2}
L^*\rho = 
L_Z^*\rho - \kappa Z \sin\phi \dpar{\rho}{\chi}
- \dpar{}{\phi}\biggbrak{1+\biggpar{\frac{2\kappa Z\cos\phi}{\tan2\chi}}\rho}
+ \frac{\kappa Z\cos\phi}{\sin 2\chi} \dpar{\rho}{\psi}\;.
\end{equation}
Now in our case, the measure which is claimed to be invariant is the product
measure $\nu\otimes\mu$, where $\mu$ has density $\sin(2\chi)/4\pi^2$. Since
$\nu$ is invariant for the process $Z_t$, $L_Z^*\nu=0$. It is then immediate to
check that $L^*\rho=0$.
\qed


\subsection{Proof of Theorem~\ref{thm_res1}}
\label{ssec_puniq}

We would like to show that $\nu\otimes\mu$ is the unique invariant measure of
the process $(Z_t,U_t)$. Since the generator $L$ of the process is very far
from uniformly elliptic, we cannot use standard ellipticity (or even 
hypo-ellipticity) arguments. To circumvent this difficulty, we shall combine an
argument of control theory with the Stroock--Varadhan support theorem.

We start by writing the joint system~\eqref{def3a},\eqref{coord6} as 
\begin{align}
\nonumber
\6Z_t &= f(Z_t)\,\6t + g(Z_t)\,\6W_t\;, \\
\6x_t &= \bigbrak{b_0(x_t) + \kappa X_t b_1(x_t) + \kappa Y_t b_2(x_t)}\,\6t\;,
\label{puniq1}
\end{align}
where $Z_t=X_t+\icx Y_t$, $x_t=(\chi_t,\phi_t,\psi_t)$ and 
\begin{align}
\nonumber
b_0(x) &= (0,1,0)\;, \\
\nonumber
b_1(x) &= \biggpar{\sin\phi, \frac{2\cos\phi}{\tan 2\chi},
-\frac{\cos\phi}{\sin2\chi}}\;, \\
b_2(x) &= \biggpar{\cos\phi, -\frac{2\sin\phi}{\tan 2\chi},
\frac{\sin\phi}{\sin2\chi}}\;. 
\label{puniq2}
\end{align}
We denote by $P_t((Z_0,x_0),B) = \probin{(Z_0,x_0)}{(Z_t,x_t)\in B}$ 
the transition probabilities of the Markov process with initial condition
$(Z_0,x_0)$. With~\eqref{puniq1} we associate the control system 
\begin{align}
\nonumber
\dot Z &= f(Z) + g(Z)\,u(t)\;, \\
\dot x &= b_0(x) + \kappa X b_1(x) + \kappa Y b_2(x)\;,
\label{puniq3}
\end{align}
where $u\colon\R_+\to\R$ is a piecewise constant function. The \emph{accessible
set}\/ from an initial condition $(Z_0,x_0)$ is the set 
\begin{equation}
\label{puniq4}
A_t(Z_0,x_0) = \bigsetsuch{(Z,x)}{\exists u\colon[0,t]\to\R, 
(Z(0),x(0))=(Z_0,x_0), (Z(t),x(t))=(Z,x)}\;.
\end{equation}
The Stroock--Varadhan support
theorem~\cite{StroockVaradhan72,StroockVaradhanProc72} states that 
\begin{equation}
\label{puniq5}
\supp P_t((Z_0,x_0),\cdot)  
= \overline{A_t(Z_0,x_0)}\;,
\end{equation}
that is, that $P_t((Z_0,x_0),B)>0$ for any open set $B\subset A_t(Z_0,x_0)$.

\begin{prop}
For any $\kappa\neq0$, any initial condition $(Z_0,x_0)$ and any $t>0$, the
closure of the accessible set $A_t(Z_0,x_0)$ is the whole phase space
$\C\times[0,\pi/2]\times\T^2$. 
\end{prop}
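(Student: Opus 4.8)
The plan is to establish controllability of the system \eqref{puniq3} by a two-stage argument: first show that, for a suitable fixed choice of the control $u(\cdot)$, the $x$-component can be steered between any two configurations in $[0,\pi/2]\times\T^2$; then observe that the $Z$-component can be adjusted independently by exploiting the driftless direction $g(Z)u$. Since $P_t$ is a transition probability for the joint process and \eqref{puniq5} holds, it suffices to prove that $A_t(Z_0,x_0)$ is dense in $\C\times[0,\pi/2]\times\T^2$ for every $t>0$ and every starting point.

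First I would treat the $x$-equation with $\kappa Z_t$ replaced by an arbitrary piecewise-constant function $v(t)=\kappa X(t)$ (and similarly for $Y$), which is legitimate because by choosing $u$ appropriately in $\dot Z = f(Z)+g(Z)u$ one can make $Z(t)$ follow essentially any continuous path, hence approximate any piecewise-constant $(X(t),Y(t))$. So the real question is whether the control-affine system $\dot x = b_0(x)+v_1(t)b_1(x)+v_2(t)b_2(x)$ on $[0,\pi/2]\times\T^2$ is controllable. Here I would invoke the Lie-algebra rank condition (Chow--Rascevskii): compute the Lie brackets $[b_0,b_1]$, $[b_0,b_2]$, $[b_1,b_2]$ and check that together with $b_1,b_2$ they span the tangent space $\R^3$ at every point of the \emph{interior} $(0,\pi/2)\times\T^2$. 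The vector fields $b_1,b_2$ already span a two-dimensional distribution away from $\chi\in\{0,\pi/2\}$ (their $\chi$-components are $\sin\phi,\cos\phi$, never simultaneously zero), and bracketing in the $b_0=\partial_\phi$ direction should produce the missing transverse component; this is a direct but slightly tedious computation using \eqref{puniq2}. Because the drift $b_0$ is "recurrent" (it merely rotates $\phi$ at unit speed, and the torus is compact), bracket-generating plus this recurrence gives full controllability on the interior in arbitrarily short time, by the standard argument that a recurrent drift can be "cancelled" over one period.

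The boundary values $\chi=0$ and $\chi=\pi/2$ require separate care, since $1/\tan2\chi$ and $1/\sin2\chi$ blow up there: the vector fields $b_1,b_2$ are singular on these two circles. I would handle this by noting that from $\chi=0$ the very first equation $\dot\chi = \kappa X\sin\phi$ with $\phi$ near $0$ and then rotating $\phi$ lets one immediately enter the interior $\chi>0$ (one checks the singular terms integrate to something finite along a short controlled arc, or simply works with the smooth coordinates \eqref{coord1}--\eqref{coord3} on $\SU(2)$ near those points, where the vector fields extend smoothly). Once in the interior, apply the interior controllability just established to reach a dense set of targets, and if the target itself lies on the boundary, approach it. Finally, to pin down the $Z$-coordinate at the target: after steering $x$ to (near) its desired value using some control, append a short final time interval on which $X_t,Y_t$ are held near zero (so $x$ barely moves) while $u$ is chosen to drive $Z$ from wherever it is to the desired final value $Z$ — possible because $\dot Z=f(Z)+g(Z)u$ is a scalar ODE with $g$ non-vanishing (ellipticity of the noise), hence controllable on $\R$ (or $\C$).

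The main obstacle I anticipate is the singular behaviour of $b_1,b_2$ at $\chi=0,\pi/2$: one must argue cleanly that accessibility is not obstructed there, ideally by passing to the global smooth chart \eqref{coord1} on $\SU(2)$ and checking the rank condition for the corresponding smooth vector fields on all of $\SU(2)$, then transporting the conclusion back. A secondary point requiring a little attention is making rigorous the reduction "$Z_t$ can approximate any piecewise-constant path," i.e. justifying that controllability of the reduced $x$-system implies controllability of the coupled $(Z,x)$-system — this follows from continuity of the flow in the control together with the independent controllability of the $Z$-equation, but it should be stated explicitly.
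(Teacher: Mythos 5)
Your proposal follows essentially the same route as the paper: reduce to the $x$-subsystem by controllability of $Z$, apply the Lie-algebra rank condition (the paper computes $[A_0,A_1]=A_2$, $[A_0,A_2]=-A_1$, $[A_1,A_2]=4A_0$ and checks $\det\{b_0,b_1,b_2\}=-1/\sin 2\chi$), handle the singular circles $\chi\in\{0,\pi/2\}$ by passing to the smooth chart~\eqref{coord1}--\eqref{coord3}, and conclude by a standard control-theoretic accessibility argument. Your extra remark about the recurrence of the drift $b_0$ making accessibility into controllability is a useful clarification of a point the paper treats somewhat tersely, but the overall structure of the argument is identical.
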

\begin{proof}
We first observe that by Assumption~\ref{ass_Markov}, the process $Z_t$ is
controllable, that is, for any $t>0$ and any trajectory $\set{Z_s}_{0\leqs
s\leqs t}$, one can find a piecewise constant control $u$ such that
$f(Z_t)+g(Z_t)u(t)$ approximates $\dot Z(t)$ arbitrarily closely. As a
consequence, we can slightly simplify the problem by considering the
three-dimensional control problem 
\begin{equation}
\label{puniq6}
\dot x = b_0(x) + \kappa b_1(x) u_1(t) + \kappa b_2(x) u_2(t)\;. 
\end{equation}
It is in fact sufficient to discuss the particular case $Z_t\in\R$, that is,
$u_2=0$, the other cases being treated similarly. 

Let $\Gamma=[0,\pi/2]\times\T^2$. 
The \emph{accessibility algebra}\/ $\cA(x)$ of a point $x\in\Gamma$ is the
smallest sub-algebra of the Lie algebra of vector fields over
$\Gamma$ containing $b_0$ and $b_1$. It is generated by all iterated Lie
brackets of $b_0$ and $b_1$. 
We identify each vector field $b_i$ with an operator $A_i=\sum_j
b^j_i\partial_j$. Then Lie brackets of $b_i$ are identified with commutators of
$A_i$. Now straightforward computations show that 
\begin{align}
\nonumber
[A_0,A_1] &= A_2\;,\\
\nonumber
[A_0,A_2] &= -A_1\;,\\
[A_1,A_2] &= 4 A_0\;.
\label{puniq7}
\end{align}
As a consequence, the accessibility algebra $\cA(x)$ is generated by the vector
fields $b_0$, $b_1$ and $b_2$. Since 
\begin{equation}
\label{puniq8}
\det\set{b_0,b_1,b_2} = -\frac{1}{\sin2\chi}\;,
\end{equation}
we conclude that $\Span[\cA(x)]$ has dimension $3$ for any $x$, except possibly
for $\chi\in\set{0,\pi/2}$. 

In order to treat the case $\chi=\pi/2$, we use the
representation~\eqref{coord3} of the system. $\chi=\pi/2$ corresponds to
$x_3=x_4=0$. Near a point $(x^\star_1,x^\star_2,0,0)$, the Lie group $\SU(2)$
can be parametrised by $(y_2,y_3,y_4)=(x_2-x^\star_2,x_3,x_4)$, the remaining
variable $x_1$ being expressed in terms of the other variables with the help of
Relation~\eqref{coord2}. Writing the vector field in variables $(y_2,y_3,y_4)$
and computing Lie brackets, we obtain again that the accessibility algebra is
generated by three vector fields and has dimension $3$. The case $\chi=0$ is
treated similarly. 

Whenever $\Span[\cA(x)]$ has full dimension, a standard result
from control theory (see \cite[Chapter~3]{NijmeijerVanderSchaft90}) states that
$A_t(x)$ contains an open, non-empty neighbourhood of $x$. Since this is true
for all $x$, we have proved that $\overline{\cA(x)}=\Gamma$ for all
$x\in\Gamma$.
\end{proof}

\begin{proof}[{\sc Proof of Theorem~\ref{thm_res1}}]
Let $\cM$ denote the convex set of all invariant measures of the process. 
With every invariant measure $\mu\in\cM$ we can associate a stationary Markov
process $\fP^\mu$. It is known that a measure $\mu\in\cM$ is extremal if and
only if the associated dynamical system is ergodic. Thus if $\cM$ contains two
measures $\mu_1\neq\mu_2$, the measure $\mu=\frac12(\mu_1+\mu_2)$ is invariant
but not ergodic. 

The previous proposition implies, with Stroock--Varadhan's support theorem,
that $P_t((Z_0,x_0),B)>0$ for any open $B\subset\C\times[0,\pi/2]\times\T^2$. 
If $\mu$ were not ergodic, we could find an open set $B$, with $0<\mu(B)<1$,
such that $P_t((Z_0,x_0),B)=1$ for $\mu$-almost all $(Z_0,x_0)\in B$ and
$P_t((Z_0,x_0),B^c)=0$ for $\mu$-almost all $(Z_0,x_0)\in B^c$, a contradiction.
Hence $\mu$ must be ergodic, and thus $\cM$ contains only one measure.
\end{proof}


\section{Convergence to the invariant measure}
\label{sec_pconv}

Let us first remark that since the Ornstein--Uhlenbeck process $Z_t$ is
proportional to $\sigma$, and enters in the dynamics only through the coupling
constant $\kappa$, all results will only depend on the product $\kappa\sigma$.
We may thus choose one of the parameters at our convenience, keeping the
product fixed. In this section, we consider that in fact $\sigma=1$, but keep
writing $\sigma$ for more clarity.  


\subsection{Proof of Theorem~\ref{thm_res3}}
\label{ssec_gap}

In order to exploit the fact that the invariant measure is uniform when
taking variables $(y=2\sin^2\chi-1,\phi,\psi)$, we consider, instead of the
infinitesimal generator~\eqref{res6} of our diffusion process, the equivalent
operator  
\begin{equation}
\label{gap01}
L = L_Z + \kappa Z
\sqrt{1-y^2}\,\sin\phi \dpar{}{y} 
+ \biggbrak{1 - \frac{2\kappa Z y}{\sqrt{1-y^2}}\,\cos\phi} \dpar{}{\phi} 
- \frac{\kappa Z\,\cos\phi}{\sqrt{1-y^2}} \dpar{}{\psi}\;.
\end{equation}
We can write this generator as $L=L_0 + \icx \kappa L_1$, where
$L_0=L_Z+\tdpar{}{\phi}$. The operator $L_Z$ is self-adjoint in
$L^2(\R,\nu(\6Z))$, and has simple eigenvalues $-n\gamma$, $n\in\N_0$. Its
eigenfunctions are (properly normalised) Hermite polynomials, which we denote
$h_n(Z)$. In particular, we have 
\begin{equation}
\label{gap02B}
h_0(Z) = 1\;,
\qquad
h_1(Z) = \frac{\sqrt{2\gamma}}{\sigma} Z\;.
\end{equation}
An integration by parts shows that 
\begin{equation}
\label{gap02}
L_1 \defby -\icx Z \sqrt{1-y^2}\,\sin\phi \dpar{}{y} 
+ \frac{2 \icx Z y}{\sqrt{1-y^2}}\,\cos\phi \dpar{}{\phi} 
+ \frac{\icx Z\,\cos\phi}{\sqrt{1-y^2}} \dpar{}{\psi}
\end{equation}
is also self-adjoint in $L^2([-1,1]\times\T^2,\6y\6\phi\6\psi)$. We may choose
an orthonormal basis of $L^2(\R\times[-1,1]\times\T^2,(\nu\otimes\mu)(\6x))$
given by 
\begin{equation}
\label{gap03}
\ket{n,p,k,r} = h_n(Z)f_p(y)\e^{\icx k\phi}\e^{\icx r\psi}\;,
\end{equation}
where $\set{f_p(y)}_{p\in\Z}$ is an arbitrary orthonormal basis of
$L^2([-1,1],\6y)$ (a Fourier basis will do). Since 
\begin{equation}
\label{gap04}
L_0 \ket{n,p,k,r} = \lambda^0_{n,k}\ket{n,p,k,r}\;, 
\qquad
\lambda^0_{n,k} = -n\gamma + \icx k\;,
\end{equation}
we see that $L_0$ has infinitely many eigenvalues on the imaginary
axis, which are infinitely degenerate. We can nevertheless apply standard
time-independent perturbation theory to second order in~$\kappa$ (see for
instance~\cite{LandauLifshitzVol3}). Expanding eigenfunctions in the
basis~\eqref{gap03}, then expanding everything in $\kappa$ and projecting on the
basis functions shows that the perturbed eigenfunctions have the form 
\begin{equation}
\label{gap05}
\ket{n,p,k,r} + \icx \kappa \sum_{m,q,l,s}
\frac{\braket{m,q,l,s}{L_1}{n,p,k,r}}{\lambda^0_{n,k}-\lambda^0_{m,l}}
\ket{m,q,l,s} + \Order{\kappa^2}\;,
\end{equation}
where the sum runs over all $(m,q,l,s)\neq(n,p,k,r)$ such that
$\braket{m,q,l,s}{L_1}{n,p,k,r}\neq0$. 
The perturbed eigenvalues have the form 
\begin{align}
\nonumber
\lambda_{n,p,k,r} ={}& \lambda^0_{n,k} 
+ \icx \kappa \braket{n,p,k,r}{L_1}{n,p,k,r} \\
&{}- \kappa^2 \sum_{m,q,l,s}
\frac{\abs{\braket{m,q,l,s}{L_1}{n,p,k,r}}^2}
{\lambda^0_{n,k}-\lambda^0_{m,l}} + \Order{\kappa^3}\;.
\label{gap06}
\end{align}
Note the minus sign due to the fact that the perturbation term $\icx L_1$ is
anti-hermitian. We now have to compute matrix elements of $L_1$ in the chosen
basis. We are however only interested in matrix elements involving $n=0$, which
give perturbations of eigenvalues on the imaginary axis. Using~\eqref{gap02B},
we get 
\begin{equation}
\label{gap07}
-\icx Z \sqrt{1-y^2}\,\sin\phi \dpar{}{y} \ket{0,p,k,r}
= \frac{\sigma}{\sqrt{2\gamma}} h_1(Z) \sqrt{1-y^2}\,f'_p(y) \frac12
\bigbrak{\e^{\icx(k+1)\phi}-\e^{\icx(k-1)\phi}} \e^{\icx r\psi}\;,
\end{equation}
showing that 
\begin{equation}
\label{gap08}
\braket{m,q,l,s}{-\icx Z \sqrt{1-y^2}\,\sin\phi \dpar{}{y}}{0,p,k,r} = 
\frac{\sigma}{\sqrt{2\gamma}} \delta_{m,1} a_{q,p} 
\bigbrak{\delta_{l,k+1}-\delta_{l,k-1}} \delta_{s,r}\;,
\end{equation}
where
\begin{equation}
\label{gap09}
a_{q,p} 
= \int_{-1}^1 \overline{f_q(y)} \sqrt{1-y^2} f'_p(y)\,\6y\;.
\end{equation}
Proceeding in a similar way for the other terms of $L_1$, we get 
\begin{equation}
\label{gap10}
\braket{m,q,l,s}{L_1}{0,p,k,r}
= \frac{\sigma}{\sqrt{2\gamma}} \delta_{m,1} 
\bigbrak{(a_{q,p}+b_{q,p})\delta_{l,k+1}
+(-a_{q,p}+b_{q,p})\delta_{l,k-1}} 
\delta_{s,r}\;,
\end{equation}
where
\begin{equation}
\label{gap11}
b_{q,p}(k,r)
= \int_{-1}^1 \overline{f_q(y)} \frac{ky+\frac12r}{\sqrt{1-y^2}} f_p(y)\,\6y\;.
\end{equation}
The expression~\eqref{gap10} of the matrix elements shows that the first-order
correction to the eigenvalues vanishes. It also shows that the second-order
correction term in~\eqref{gap06} is well-defined, because the denominator never
vanishes when the matrix element is nonzero. The expansion for the
eigenvalue becomes  
\begin{equation}
\label{gap12}
\lambda_{0,p,k,r} = \icx k
- \frac{(\kappa\sigma)^2}{2\gamma}  \sum_{q} \biggbrak{
\frac{\abs{a_{q,p}+b_{q,p}(k,r)}^2}{\gamma+\icx}
+\frac{\abs{a_{q,p}-b_{q,p}(k,r)}^2}{\gamma-\icx}
} + \Order{\kappa^3}\;.
\end{equation}
In particular, 
\begin{equation}
\label{gap13}
\re \lambda_{0,p,k,r} = - \frac{(\kappa\sigma)^2}{2(1+\gamma^2)}
\sum_{q} \Bigbrak{\abs{a_{q,p}+b_{q,p}(k,r)}^2 +
\abs{a_{q,p}-b_{q,p}(k,r)}^2} + \Order{\kappa^3}\;.
\end{equation}
It remains to see that the sum over $q$ does not vanish. Being a sum of
non-negative terms, it vanishes if and only if all its terms vanish, which
happens if and only if $a_{q,p}=0$ and $b_{q,p}(k,r)=0$ for all $q$. 
Now for $(k,r)\neq(0,0)$, $b_{q,p}(k,r)$ cannot vanish for all $q$, because the
function $(ky+\frac12r) f_p(y)/\sqrt{1-y^2}$ cannot be orthogonal to all basis
functions. For $(k,r)=(0,0)$, we have $b_{q,p}(0,0)=0$. However in that case,
$a_{q,p}=0$ for all $q$ if and only if $\sqrt{1-y^2} f'_p(y)$ is identically
zero, that is, only in the case $p=0$ where $f_p(y)$ is constant. But this
corresponds to the zero eigenvalue of the invariant measure.
\qed


\newpage
\section{Diffusion approximation}
\label{sec_ptrans}

In this section, it will be more convenient to consider both $\kappa$ and
$\sigma$ as small parameters.


\subsection{Averaging}
\label{ssec_avrg}

We consider the SDE on $\R\times[-1,1]\times\fS^1$ given by 
\begin{equation}
\label{avrg1}
\begin{split}
\6Z_t &= -\gamma Z_t\,\6t + \sigma\,\6W_t\;,\\
\6y_t &= 2\kappa Z_t\,\sqrt{1-y_t^2}\sin\phi_t\,\6t\;,\\
\6\phi_t &= \biggbrak{1 - 2\kappa Z_t\,\frac{y_t}{\sqrt{1-y_t^2}}
\cos\phi_t}  \6t\;.
\end{split}
\end{equation}
For small $\kappa$, the variable $\phi_t\simeq t$ changes much faster than
$y_t$, while $Z_t$ also fluctuates on timescales of order $1$. The 
philosophy of averaging tells us that the dynamics of $y_t$ should be
close to the one obtained by averaging the right-hand side over all fast
variables. However this average vanishes in the present case, so that we
have to look into the averaging procedure in more detail.

\begin{prop}
\label{prop_avrg}
There exists a change of variable $y\mapsto \bar y$, with $\bar
y\in[-1,1]$, such that 
\begin{equation}
\label{avrg02}
\6 \bar y_t = F(Z_t,\bar y_t,\phi_t)\,\6t
+ G(Z_t,\bar y_t,\phi_t)\,\6W_t\;,
\end{equation}
where the drift term satisfies 
\begin{equation}
\label{avrg03}
F(Z,\bar y,\phi) = -\frac{4\kappa^2\gamma}{1+\gamma^2} Z^2 \bar y 
+ \biggOrder{\frac{\kappa^3\gamma Z^3}{(1+\gamma^2)^2}}\;,
\end{equation}
and the diffusion term is of the form
\begin{equation}
\label{avrg04}
G(Z,\bar y,\phi) = 2\kappa\sigma
\sqrt{\frac{1-\bar y^2}{1+\gamma^2}}
\cos(\phi+\theta) 
+ \biggOrder{\frac{\kappa^2\sigma Z}{1+\gamma^2}}\;.
\end{equation}
where $\theta=\arctan\gamma$. 
\end{prop}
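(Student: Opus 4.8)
\medskip
\noindent\textbf{Overall approach.}
The plan is to obtain $\bar y$ as a near-identity transformation $\bar y=y+\kappa\,w(Z,y,\phi)$, with the profile $w$ chosen so that the $\Order{\kappa}$ drift of $\bar y$ is cancelled. This is the averaging idea adapted to the fact that the $\Order{\kappa}$ drift $2\kappa Z\sqrt{1-y^2}\sin\phi$ of $y$ has zero average over the fast angle $\phi$, and a zero-mean oscillatory drift can be removed by a coordinate change. Applying It\^o's formula to $w(Z_t,y_t,\phi_t)$ along~\eqref{avrg1}, and using that $\6W_t$ enters the system only through $\6Z_t$ --- so that $y_t,\phi_t$ contribute only bounded-variation terms, there is no It\^o correction coming from them, and the martingale part of $\6\bar y_t$ is \emph{exactly} $\kappa\sigma\,\partial_Z w\,\6W_t$ --- one obtains the exact identity
\begin{multline}
\label{av_plan1}
\6\bar y_t = \kappa\Bigbrak{2Z_t\sqrt{1-y_t^2}\,\sin\phi_t + (L_Z+\partial_\phi)w}\6t \\
{}+\kappa^2\Bigbrak{2Z_t\sqrt{1-y_t^2}\,\sin\phi_t\,\partial_y w - \frac{2Z_t\,y_t}{\sqrt{1-y_t^2}}\,\cos\phi_t\,\partial_\phi w}\6t \\
{}+\kappa\sigma\,\partial_Z w\,\6W_t\;,
\end{multline}
where every function is evaluated at $(Z_t,y_t,\phi_t)$ and $L_Z$ is the Ornstein--Uhlenbeck generator~\eqref{res7}.

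\medskip
\noindent\textbf{Solving the cohomological equation.}
The first step is to annihilate the coefficient of $\kappa$ in~\eqref{av_plan1}, i.e.\ to solve $(L_Z+\partial_\phi)w=-2Z\sqrt{1-y^2}\,\sin\phi$. As the right-hand side is affine in $Z$ and a pure $\sin\phi$, one looks for $w=Z\sqrt{1-y^2}\bigbrak{A\sin\phi+B\cos\phi}$; being affine in $Z$, it is killed by $\partial_Z^2$, so $L_Z w=-\gamma w$ and the $\sigma$-dependence drops out. Matching the $\sin\phi$ and $\cos\phi$ coefficients reduces the equation to the linear system $-\gamma A-B=-2$, $A-\gamma B=0$, which is non-resonant (determinant $1+\gamma^2\neq0$) with solution $A=2\gamma/(1+\gamma^2)$, $B=2/(1+\gamma^2)$. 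Thus
\begin{equation}
\label{av_plan2}
w(Z,y,\phi)=\frac{2Z\sqrt{1-y^2}}{1+\gamma^2}\bigbrak{\gamma\sin\phi+\cos\phi}\;.
\end{equation}

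\medskip
\noindent\textbf{Reduced equation and conclusion.}
Inserting~\eqref{av_plan2} into the $\kappa^2$-bracket of~\eqref{av_plan1} and using $\sin\phi(\gamma\sin\phi+\cos\phi)+\cos\phi(\gamma\cos\phi-\sin\phi)=\gamma$, the oscillatory terms cancel completely and one is left with the $\phi$-independent drift $-4\gamma Z^2 y/(1+\gamma^2)$, with no further remainder. Hence, still exactly,
\begin{equation}
\label{av_plan3}
\6\bar y_t = -\frac{4\kappa^2\gamma}{1+\gamma^2}\,Z_t^2\,y_t\,\6t + \frac{2\kappa\sigma\sqrt{1-y_t^2}}{1+\gamma^2}\bigbrak{\gamma\sin\phi_t+\cos\phi_t}\,\6W_t\;.
\end{equation}
It then remains to re-express the coefficients through $\bar y$. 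Inverting the near-identity map gives $y=\bar y-\kappa w(Z,\bar y,\phi)+\Order{\kappa^2}$; substituting this into the drift of~\eqref{av_plan3} produces $F$ in the form~\eqref{avrg03}, the leading remainder being $4\kappa^3\gamma Z^2 w(Z,\bar y,\phi)/(1+\gamma^2)$, and substituting it into the diffusion coefficient, together with $\sqrt{1-y^2}-\sqrt{1-\bar y^2}=\Order{\kappa\abs Z}$, produces $G$ in the form~\eqref{avrg04} after writing $\gamma\sin\phi+\cos\phi=\sqrt{1+\gamma^2}\cos(\phi-\arctan\gamma)$ --- i.e.\ the stated $\cos(\phi+\theta)$ with $\theta=\arctan\gamma$, the sign of the phase shift being immaterial for what follows.

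\medskip
\noindent\textbf{Main obstacle.}
The computation above is essentially forced, so the only point genuinely requiring care is that the new variable must stay in $[-1,1]$, since $\sqrt{1-\bar y^2}$ appears in~\eqref{avrg04}. Although $w$ carries a factor $\sqrt{1-y^2}$ and hence vanishes at $y=\pm1$, one still has $\bar y-y=\Order{\kappa\sqrt{1-y^2}}$, which can push $\bar y$ marginally outside $[-1,1]$ on an $\Order{\kappa^2\abs Z^2}$-thin layer near the endpoints --- inadmissible because $\sqrt{1-\bar y^2}$ must stay real. I would repair this by multiplying $w$ by a smooth cutoff equal to $1$ on $[-1+\delta,1-\delta]$ and supported in $(-1,1)$, with $\delta$ a small power of $\kappa$: this changes nothing away from the endpoints and only alters $F$ and $G$ inside the boundary layer, which is harmless for the first-passage analysis of Theorem~\ref{thm_res2}, whose target level is bounded away from $1$. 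The genuinely substantial work --- controlling the random remainders in~\eqref{avrg03}--\eqref{avrg04} through Gaussian moment bounds on $Z_t$ and patching the resulting estimates across phase space --- belongs to the proof of that theorem rather than to the present proposition.
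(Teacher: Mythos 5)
Your proof is correct and follows essentially the same route as the paper: a near-identity change of variable $\bar y=y+\kappa w$ with $w$ determined by the same cohomological equation $(L_Z+\partial_\phi)w=-2Z\sqrt{1-y^2}\sin\phi$, yielding the identical $w=\tfrac{2Z\sqrt{1-y^2}}{1+\gamma^2}(\gamma\sin\phi+\cos\phi)$ and the same cancellation of the oscillatory $\kappa^2$-terms. Your additional observation that $\bar y$ can marginally leave $[-1,1]$ in an $\Order{\kappa^2 Z^2}$-thin layer near $y=\pm1$ (so that $\sqrt{1-\bar y^2}$ could become imaginary) is a genuine gap in the paper's own proof that it silently passes over, and your proposed cutoff repair is an adequate fix; your remark that the sign of the phase $\theta$ in $\cos(\phi\pm\theta)$ is immaterial is also warranted, since the paper's identity $\cos\phi+\gamma\sin\phi=\sqrt{1+\gamma^2}\cos(\phi+\theta)$ in fact holds with $-\theta$ rather than $+\theta$.
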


\begin{proof}
Averaging amounts to looking for a $\phi$-periodic change of variables
removing the dependence of $\6y_t$ on $\phi_t$. We thus set
$\bar y=y+\kappa w(Z,y,\phi)$ and plug this into the equation, yielding 
\begin{align}
\nonumber
\6\bar y_t ={}& \kappa\biggbrak{Z_t\sqrt{1-y_t^2}\sin\phi_t 
- \gamma Z_t\dpar wZ+ \dpar w\phi}\,\6t \\
&{}+ 2\kappa^2Z_t  \biggbrak{\sqrt{1-y_t^2}\sin\phi_t \dpar wy -
\frac{y_t}{\sqrt{1-y_t^2}}\cos\phi_t \dpar w\phi }\,\6t 
+ \kappa\sigma\dpar wZ \,\6W_t\;.
\label{avrg05}
\end{align}
The choice 
\begin{equation}
\label{avrg06}
w(Z,y,\phi) = \frac{2Z}{1+\gamma^2} \sqrt{1-y^2}
\bigbrak{\cos\phi + \gamma\sin\phi}
\end{equation}
allows to eliminate the first term in brackets, and yields
\begin{equation}
\label{avrg07}
\6\bar y_t = -\frac{4\kappa^2\gamma}{1+\gamma^2} Z_t^2 y_t \,\6t
+ \frac{2\kappa\sigma}{1+\gamma^2} \sqrt{1-y_t^2}
\bigbrak{\cos\phi_t + \gamma\sin\phi_t}
\,\6W_t\;.
\end{equation}
Finally, using the equality $\cos\phi + \gamma\sin\phi = \sqrt{1+\gamma^2}
\cos(\phi+\theta)$ and replacing $y_t$ by $\bar y_t$ yields the above
expressions for $\6\bar y_t$. 
\end{proof}

We see that the averaging transformation has created an effective drift
term, fluctuating rapidly in time.


\subsection{First-passage times}
\label{ssec_tau}

We would like to estimate the expected first-passage time of $\bar y_t$ at
any level $y<1$, and for the initial condition $(y_0,\phi_0)=(-1,0)$. We
split this problem into several parts, estimating first the time needed to
reach a level $-\delta_1<0$, then the level $0$, and finally all positive
$y$. Also, in order to avoid difficulties due to $Z_t$ becoming very large,
we will first work within the set $\setsuch{\omega}{\abs{Z_t}\leqs
1\;\forall t}$, and deal later with the rare events that $\abs{Z_t}$
becomes larger than~$1$. For this purpose, it is useful to introduce the
notation 
\begin{equation}
\label{tau02}
\tau_\cD = \inf\setsuch{t>0}{x_t \not\in \cD}
\end{equation}
for the first-exit time of $x_t=(Z_t,\bar y_t,\phi_t)$ from a set
$\cD\subset\R\times[-1,1]\times\fS^1$.
We denote by $\fP^{x_0}$ the law of the process starting in
some point $x_0=(Z_0,\bar y_0,\phi_0)$ at time $0$, and by $\E^{x_0}$
expectations with respect to this law.
Note that we have to consider here deterministic initial conditions $Z_0$, and
only at the end will we take expectations with respect to the initial
distribution of $Z$. 
We also introduce the notation 
\begin{equation}
\label{tau002}
T^\star = \frac{1+\gamma^2}{(\kappa\sigma)^2}\;.
\end{equation}
The following lemma will allow us to reduce the problem to time
intervals of fixed size~$T$. 

\begin{lemma}
\label{lem_tau0}
Fix $T>0$ and let 
\begin{equation}
\label{tau03}
q(T) = \sup_{x'_0\in\cD}
\bigprobin{x'_0}{\tau_\cD>T}\;.
\end{equation}
If $q(T)>0$, then 
\begin{equation}
\label{tau04}
\bigprobin{x_0}{\tau_\cD>t} \leqs q(T)^{-1} \e^{-\log q(T)^{-1} t/T}
\end{equation}
holds for all $t>0$ and all $x_0\in\cD$.
\end{lemma}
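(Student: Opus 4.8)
The plan is to use the (time-homogeneous) Markov property of the process $x_t=(Z_t,\bar y_t,\phi_t)$ — the coefficients of~\eqref{avrg1} do not depend on time — together with a submultiplicativity argument over consecutive time windows of length $T$. Write $q=q(T)\in(0,1]$ for brevity; note that the hypothesis $q>0$ is exactly what makes the right-hand side of~\eqref{tau04} meaningful (otherwise $q^{-1}$ and $\log q^{-1}$ are infinite), while $q\leqs1$ is automatic since $q$ is a supremum of probabilities.

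\emph{Step 1: a geometric bound at integer multiples of $T$.} I claim that $\bigprobin{x_0}{\tau_\cD>nT}\leqs q^n$ for every $n\in\N$ and every $x_0\in\cD$, and prove it by induction on $n$. The case $n=1$ is the definition of $q$ as a supremum. For the inductive step, observe that $\tau_\cD$ is a stopping time, so $\set{\tau_\cD>nT}\in\cF_{nT}$, and that on this event one has $x_{nT}\in\cD$ (immediate from the definition of $\tau_\cD$ as an infimum, since $nT<\tau_\cD$). Conditioning on $\cF_{nT}$ and applying the Markov property at the deterministic time $nT$ gives
\[
\bigprobin{x_0}{\tau_\cD>(n+1)T}
= \Bigexpecin{x_0}{\one_{\set{\tau_\cD>nT}}\,\bigprobin{x_{nT}}{\tau_\cD>T}}
\leqs q\,\bigprobin{x_0}{\tau_\cD>nT}
\leqs q^{\,n+1}\;,
\]
where the first inequality uses $\bigprobin{x_{nT}}{\tau_\cD>T}\leqs q$ (legitimate because $x_{nT}\in\cD$) and the second the induction hypothesis.

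\emph{Step 2: interpolation to arbitrary $t$.} For general $t>0$ set $n=\intpart{t/T}$, so that $nT\leqs t$ and hence $\set{\tau_\cD>t}\subset\set{\tau_\cD>nT}$. Using Step~1, the inequality $n>t/T-1$, and the fact that $a\mapsto q^a$ is non-increasing for $q\in(0,1]$,
\[
\bigprobin{x_0}{\tau_\cD>t}\leqs q^{\intpart{t/T}}\leqs q^{\,t/T-1}
= q^{-1}\e^{(t/T)\log q}=q^{-1}\e^{-\log(q^{-1})\,t/T}\;,
\]
which is precisely~\eqref{tau04}. (When $q=1$ the statement is trivially true: the exponent vanishes and the right-hand side equals $1$.)

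The argument is entirely routine and I do not anticipate a genuine obstacle; the only point that deserves a moment's care is the Markov/measurability step, namely checking that on $\set{\tau_\cD>nT}$ the position $x_{nT}$ really lies in $\cD$ so that the supremum defining $q(T)$ applies to it — this follows from the definition of $\tau_\cD$ (and path continuity). Everything else is the standard "the tail of an exit time is dominated by a geometric sequence" estimate.
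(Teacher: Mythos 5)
Your proposal is correct and uses essentially the same argument as the paper: the (strong) Markov property at deterministic times to obtain the geometric bound $\bigprobin{x_0}{\tau_\cD>nT}\leqs q(T)^n$, followed by the interpolation $n=\intpart{t/T}\geqs t/T-1$. The only cosmetic difference is that you phrase the first step as an explicit induction and observe that the ordinary (rather than strong) Markov property at the deterministic times $nT$ suffices, while the paper unrolls the same recursion as a chain of inequalities; these are the same proof.
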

\begin{proof}
We set $t=nT+u$, with $n=\intpart{t/T}$. By the strong Markov property, we
have 
\begin{align}
\nonumber
\bigprobin{x_0}{\tau_\cD>t} 
&\leqs \bigprobin{x_0}{\tau_\cD>nT} \\
\nonumber
&\leqs \bigexpecin{x_0}{\indexfct{\tau_\cD>(n-1)T}
\probin{x_{(n-1)T}}{\tau_\cD>nT}} \\
\nonumber
&\leqs q(T) \, \bigprobin{x_0}{\tau_\cD>(n-1)T} \\
&\leqs \dots \leqs q(T)^n 
= e^{-n\log q(T)^{-1}}\;.
\label{tau05:1}
\end{align}
The result follows from the fact that $n\geqs (t/T)-1$.
\end{proof}

Note that this estimate implies immediately 
\begin{equation}
\label{tau06}
\expecin{x_0}{\tau_\cD} = \int_0^\infty \bigprobin{x_0}{\tau_\cD>t}\,\6t
\leqs \frac{T}{q(T)\log q(T)^{-1}}\;.
\end{equation} Our general strategy will be to find the smallest possible
$T$ such that $q(T)$ is of order $1$, i.e., independent of the parameters
$\kappa$, $\sigma$ and $\gamma$. Then $T$ will give the order of magnitude
of the first-exit time.

When deriving these estimates, we will often encounter the process
\begin{equation}
X_t = \frac{2\gamma}{\sigma^2} 
\frac{1}{t}\int_0^t Z_s^2\,\6s - 1\;. \\
\label{tau06A}
\end{equation}

\begin{lemma}
\label{lem_Xt}
For any initial condition $Z_0$, we have 
\begin{equation}
 \label{tau06B}
X_t = X_\infty + \overbar{X}_t,
\end{equation}
where $X_\infty$ has zero expectation, $\sqrt{X_\infty+1}$ follows a
standard gaussian distribution, and $\overbar{X}_t$ converges to zero in
probability. More precisely, for any $\eps>0$, 
\begin{equation}
\label{tau06C}
\bigprob{\abs{\overbar{X}_t}>\eps} =
\biggOrder{\frac{2\gamma \expec{Z_0^2}+\sigma^2}{2\gamma t\eps}}\;.
\end{equation}
\end{lemma}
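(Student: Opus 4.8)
The plan is to write $X_t$ explicitly in terms of $Z_s^2$ and use the known second-moment structure of the Ornstein--Uhlenbeck process, together with a Chebyshev-type estimate for the time-average. First I would recall that by Assumption~\ref{ass_OU}, $Z_t$ solves $\6Z_t=-\gamma Z_t\,\6t+\sigma\6W_t$, so the stationary process has $\expecin{\nu}{Z^2}=\sigma^2/2\gamma$; this is exactly the quantity that makes $X_\infty$ in~\eqref{tau06B} centred. The natural candidate is to set
\begin{equation*}
X_\infty = \frac{2\gamma}{\sigma^2}\,\overline{Z^2}_\infty - 1\;,
\qquad
\overbar{X}_t = \frac{2\gamma}{\sigma^2}\biggpar{\frac1t\int_0^t Z_s^2\,\6s - \overline{Z^2}_\infty}\;,
\end{equation*}
where $\overline{Z^2}_\infty$ is the a.s. ergodic limit $\lim_{t\to\infty}\frac1t\int_0^t Z_s^2\,\6s = \sigma^2/2\gamma$. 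But this gives $X_\infty\equiv0$, which is \emph{not} the claim; the claim is that $\sqrt{X_\infty+1}$ is standard Gaussian. So the correct reading is that the ``limit'' in the decomposition is not the ergodic (deterministic) limit but rather $X_\infty = Z_0^2\cdot(2\gamma/\sigma^2)$ \emph{under a time change}, or — more likely given the role this lemma plays later — that the statement implicitly fixes a particular coupling in which $Z_0$ is the stationary Gaussian and $\sqrt{X_\infty+1}=\sqrt{2\gamma}\,Z_0/\sigma = h_1(Z_0)$ is exactly the normalised first Hermite polynomial from~\eqref{gap02B}. I would therefore set $X_\infty = (2\gamma/\sigma^2)Z_0^2 - 1$ and $\overbar X_t = X_t - X_\infty$, so that $\sqrt{X_\infty+1}=\sqrt{2\gamma}\,Z_0/\sigma$ is standard Gaussian by the definition of the stationary initial condition, and $\expec{X_\infty}=0$ follows from $\expec{Z_0^2}=\sigma^2/2\gamma$.

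The substance of the lemma is then the convergence $\overbar X_t\to0$ with the explicit rate~\eqref{tau06C}. Using~\eqref{res4}, $Z_s = Z_0\e^{-\gamma s} + \sigma\int_0^s\e^{-\gamma(s-u)}\6W_u$, I would compute $\expec{Z_s^2} = \expec{Z_0^2}\e^{-2\gamma s} + \frac{\sigma^2}{2\gamma}(1-\e^{-2\gamma s})$, so that
\begin{equation*}
\bigexpec{\frac1t\int_0^t Z_s^2\,\6s}
= \frac1t\int_0^t\biggbrak{\expec{Z_0^2}\e^{-2\gamma s} + \frac{\sigma^2}{2\gamma}(1-\e^{-2\gamma s})}\6s
= \frac{\sigma^2}{2\gamma} + \frac{1}{2\gamma t}\Bigpar{\expec{Z_0^2}-\frac{\sigma^2}{2\gamma}}(1-\e^{-2\gamma t})\;.
\end{equation*}
Hence $\bigexpec{\overbar X_t} = \Order{(2\gamma\expec{Z_0^2}+\sigma^2)/(2\gamma t)}$, which already supplies the $L^1$ decay; Markov's inequality $\prob{\abs{\overbar X_t}>\eps}\leqs\expec{\abs{\overbar X_t}}/\eps$ then gives~\eqref{tau06C} in the form stated, \emph{provided} one controls $\expec{\abs{\cdot}}$ rather than $\expec{\cdot}$. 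Since $\overbar X_t$ is not sign-definite, I would instead bound $\prob{\abs{\overbar X_t}>\eps}$ via $\expec{\abs{\overbar X_t}}\leqs \expec{\frac{2\gamma}{\sigma^2 t}\int_0^t Z_s^2\,\6s} + \abs{X_\infty}$-type splitting, or more cleanly: note $X_t+1 = \frac{2\gamma}{\sigma^2 t}\int_0^t Z_s^2\,\6s\geqs0$, so $\overbar X_t \geqs -1-X_\infty$ is bounded below, and the upper tail is controlled by the first-moment bound above, while the lower tail $\prob{\overbar X_t < -\eps}$ needs a separate argument (the time-average of $Z_s^2$ cannot be much smaller than its mean).

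The main obstacle I anticipate is precisely this: a clean one-sided first-moment estimate is not enough for a two-sided statement, so I would either (i) estimate the variance $\variance\bigpar{\frac1t\int_0^t Z_s^2\,\6s}$ using the known fourth-moment/covariance structure of the OU process — $\cov(Z_s^2,Z_u^2)$ decays like $\e^{-2\gamma\abs{s-u}}$ times a constant of order $(\sigma^2/\gamma)^2$, so the double integral is $\Order{(\sigma^2/\gamma)^2/(\gamma t)}$ — and then apply Chebyshev, which would actually give a \emph{better} rate than~\eqref{tau06C} but with the square of the stated quantity; or (ii) accept the weaker $L^1$/Markov route, which matches~\eqref{tau06C} exactly and is all that is needed downstream. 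Given that the lemma only asserts the $\Order{\cdot}$ in~\eqref{tau06C}, route (ii) combined with a crude lower-tail bound (e.g. via Jensen, $\frac1t\int_0^t Z_s^2\6s \geqs (\frac1t\int_0^t\abs{Z_s}\6s)^2$, or simply absorbing the lower tail into the upper-tail constant) is the economical choice, and I would present that, relegating the sharper variance computation to a remark.
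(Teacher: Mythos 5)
Your approach is genuinely different from the paper's. The paper writes $Z_t = Z^0_t + \e^{-\gamma t}(Z_0 - Z^0_0)$, where $Z^0_t$ is a stationary Ornstein--Uhlenbeck process driven by the same Brownian motion, and substitutes this into the definition of $X_t$; this produces a purely stationary contribution, to which the ergodic theorem is applied, plus a remainder
\begin{equation*}
R_t = \frac{2\gamma}{\sigma^2 t}\int_0^t\Bigbrak{2\e^{-\gamma s}(Z_0-Z^0_0)Z^0_s + \e^{-2\gamma s}(Z_0-Z^0_0)^2}\,\6s\;,
\end{equation*}
to whose absolute value Markov's inequality is applied. You instead compute first and second moments of $\frac1t\int_0^t Z_s^2\,\6s$ directly. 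That is more elementary, but it loses the structural split into a stationary core and a decaying transient, and that split is precisely what produces the stated $1/t$ rate.

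There are two genuine gaps in your argument, one of which is also a soft spot in the statement itself. First, the interpretational issue you raise is real: by the ergodic theorem $\frac1t\int_0^t Z_s^2\,\6s\to\sigma^2/2\gamma$ a.s., so $X_t\to0$ a.s., and any identity $X_t=X_\infty+\overbar{X}_t$ with $\overbar{X}_t\to0$ in probability forces $X_\infty=0$, not a centred chi-square. Your proposed resolution $X_\infty=(2\gamma/\sigma^2)Z_0^2-1$ does satisfy the distributional claim, but then $\overbar{X}_t=X_t-X_\infty\to-X_\infty\neq0$, so the convergence clause fails instead of the distributional one; you have traded one inconsistency for another rather than resolving it. (The paper's own sketch shares this difficulty, since the ergodic theorem applied to the stationary part also yields a deterministic limit.) Second, your rate argument does not reproduce~\eqref{tau06C}. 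Markov's inequality requires $\expec{\abs{\overbar{X}_t}}$, not $\abs{\expec{\overbar{X}_t}}$: the quantity $\abs{\expec{X_t}}$ is indeed $\Order{1/t}$, but the centred part of $\frac1t\int_0^t Z_s^2\,\6s$ has $L^1$-norm of order $1/\sqrt{\gamma t}$ (the CLT scale), so the $L^1$/Markov route you favour gives at best $\Order{1/(\sqrt{\gamma t}\,\eps)}$, while your Chebyshev alternative costs an extra $1/\eps$. The paper gets a clean $\Order{1/(t\eps)}$ only because Markov is applied to $\abs{R_t}$ alone, whose expectation really is $\Order{1/t}$; your direct-moment route cannot isolate that piece.
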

\begin{proof}
Write $Z_t=Z^0_t + \e^{-\gamma t}(Z_0-Z^0_t)$, where $Z^0_t$ is a
stationary gaussian process of variance $\sigma^2/2\gamma$, and substitute
in~\eqref{tau06A}. The limit $X_\infty$ is obtained by applying the ergodic
theorem to the stationary part. The bound~\eqref{tau06C} is a consequence
of Markov's inequality applied to the remaining part.
\end{proof}

We start by characterising the time needed to go from $-1$ to a level
$-\delta_1$, where $\delta_1$ is a fixed constant in $(0,1)$. For an
interval $I\subset[-1,1]$, we set 
\begin{equation}
\label{tau07}
\cD_I 
= (-1,1)\times I \times \fS^1
\qquad\text{and}\qquad
\tau_I = \tau_{\cD_I}\;.
\end{equation}

\begin{prop}
\label{prop_tau01}
There exist constants $\sigma_1$, $c_1$ and
$\kappa_1(\delta_1)=\Order{\delta_1}$ such that whenever $\sigma<\sigma_1$
and $\kappa<\kappa_1$, 
\begin{equation}
\label{tau08}
\biggprobin{x_0}{\tau_{[-1,-\delta_1)} > 
c_1 \biggpar{\frac1\gamma \vee 
\frac{T^\star}{\delta_1}}} \leqs \frac34
\end{equation}
holds for all $x_0\in\cD_{[-1,-\delta_1)}$.
\end{prop}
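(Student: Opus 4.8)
The plan is to combine the averaged equation~\eqref{avrg02} for $\bar y_t$ from Proposition~\ref{prop_avrg} with the ergodic estimate of Lemma~\ref{lem_Xt}, the key point being that the drift of $\bar y_t$ is \emph{uniformly positive} on the region in question. I would write $\cD=\cD_{[-1,-\delta_1)}=(-1,1)\times[-1,-\delta_1)\times\fS^1$ and $\tau=\tau_\cD$, and note that leaving $\cD$ means either $\bar y_t$ reaches $-\delta_1$ or $\abs{Z_t}$ reaches $1$, the value $\bar y=-1$ being a non-exit boundary since $\bar y_t\in[-1,1]$ at all times. On $\cD$ one has $\bar y\le-\delta_1<0$ and $\abs Z<1$, so \eqref{avrg03} gives $F(Z,\bar y,\phi)\ge\frac{4\kappa^2\gamma}{1+\gamma^2}Z^2\delta_1-C\frac{\kappa^3\gamma\abs Z^3}{(1+\gamma^2)^2}$ for a universal constant $C$; since the error is cubic in $Z$ its ratio to the main term is at most $C\kappa/\delta_1$, so choosing $\kappa_1(\delta_1)$ proportional to $\delta_1$ — which is the source of the $\Order{\delta_1}$ — forces $F(Z,\bar y,\phi)\ge\frac{2\kappa^2\gamma\delta_1}{1+\gamma^2}Z^2$ on $\cD$ whenever $\kappa<\kappa_1$. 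By \eqref{avrg04}, and for $\sigma$ below a threshold $\sigma_1$, the coefficient $G$ is bounded on $\cD$ by a constant multiple of $\kappa\sigma/\sqrt{1+\gamma^2}$ (no singularity appears at $\bar y=-1$, because $w$ in Proposition~\ref{prop_avrg} carries a factor $\sqrt{1-y^2}$), so $t\mapsto\int_0^{t\wedge\tau}G\,\6W_s$ is a genuine mean-zero $L^2$ martingale.

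The crucial move is to avoid a pathwise tracking of $\bar y_t$ — which would have to fight the martingale fluctuations, of size $\sim\sqrt{t/T^\star}$, already exceeding the $\Order1$ gap $1-\delta_1$ once $t\gtrsim T^\star$, and would cost a further factor of $\delta_1$ — and instead to take expectations, which kills the martingale and exploits positivity of the drift \emph{on all of} $[0,\tau]$. Applying $\expecin{x_0}{\cdot}$ to~\eqref{avrg02} stopped at $\tau$, and using $\bar y_{t\wedge\tau}\le-\delta_1$ (path continuity) and $\bar y_0\ge-1$, one gets
\begin{equation*}
-\delta_1 \ge \expecin{x_0}{\bar y_{t\wedge\tau}} = \bar y_0 + \expecin{x_0}{\int_0^{t\wedge\tau}F\,\6s} \ge -1 + \frac{2\kappa^2\gamma\delta_1}{1+\gamma^2}\expecin{x_0}{\int_0^{t\wedge\tau}Z_s^2\,\6s}\;,
\end{equation*}
so that, letting $t\to\infty$, $\expecin{x_0}{\int_0^\tau Z_s^2\,\6s}\le\frac{1+\gamma^2}{2\kappa^2\gamma\delta_1}=\frac{\sigma^2T^\star}{2\gamma\delta_1}$. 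By Markov's inequality, $\bigprobin{x_0}{\int_0^\tau Z_s^2\,\6s>M}$ can be made as small as we wish by taking $M$ a large multiple of $\sigma^2T^\star/\gamma\delta_1$.

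Finally I would turn this into a bound on $\tau$. Set $T=c_1\bigpar{\tfrac1\gamma\vee\tfrac{T^\star}{\delta_1}}$ with $c_1$ a large numerical constant. On $\set{\tau>T}$ one has $\int_0^TZ_s^2\,\6s=\int_0^{T\wedge\tau}Z_s^2\,\6s\le\int_0^\tau Z_s^2\,\6s$, so $\bigprobin{x_0}{\tau>T}$ is bounded by the small quantity above plus $\bigprobin{x_0}{\int_0^TZ_s^2\,\6s\le M}$. Now Lemma~\ref{lem_Xt} gives $\int_0^TZ_s^2\,\6s=\frac{\sigma^2T}{2\gamma}(1+X_T)$ with $1+X_T=(1+X_\infty)+\overbar X_T$, where $1+X_\infty$ is distributed as the square of a standard Gaussian and $\bigprob{\abs{\overbar X_T}>\eps}=\bigOrder{(2\gamma+\sigma^2)/(2\gamma T\eps)}$ (using $Z_0^2<1$). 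Since $T\ge c_1T^\star/\delta_1$, the event $\set{\int_0^TZ_s^2\,\6s\le M}$ forces $1+X_T=\Order{1/c_1}$, hence $1+X_\infty=\Order{1/c_1}$ or $\abs{\overbar X_T}=\Order{1/c_1}$; the first has probability $\Order{1/\sqrt{c_1}}$ (smallness of the Gaussian distribution function near the origin), while, using in addition $T\ge c_1/\gamma$, $T\ge c_1T^\star$ and $\sigma<\sigma_1\le1$, the second has probability $\Order{1/c_1}$. Choosing $c_1$ large and $\sigma_1$ small, the three contributions (Markov, Gaussian, ergodic remainder) sum to strictly less than $\tfrac34$. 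The $\tfrac1\gamma$ term in $T$ is exactly the time scale on which $\int_0^tZ_s^2\,\6s$ reaches its ergodic value $\tfrac{\sigma^2}{2\gamma}t$, as quantified by Lemma~\ref{lem_Xt}; the main obstacle in the whole argument is recognising that one should estimate $\expecin{x_0}{\int_0^\tau Z_s^2\,\6s}$ rather than control $\bar y_T$ directly, the latter being defeated by the diffusive fluctuations.
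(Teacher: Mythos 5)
Your proof is correct and takes a genuinely different route from the paper's, even though both start from the same drift bound $F(Z,\bar y,\phi)\geqs\frac{2\kappa^2\gamma\delta_1}{1+\gamma^2}Z^2$ on $\cD_{[-1,-\delta_1)}$ (which is the source of the common $\kappa<\kappa_1(\delta_1)=\Order{\delta_1}$ condition). The paper works pathwise: it integrates the SDE directly to get $\bar y_t-\bar y_0\geqs\frac{(\kappa\sigma)^2\delta_1}{1+\gamma^2}t(X_t+1)-\frac{2\kappa\sigma}{1+\gamma^2}Y_t$, writes $\probin{x_0}{\tau>T}\leqs\probin{x_0}{\bar y_T<-\delta_1}$, splits on whether the ergodic average $X_T$ is close to $-1$ (handled by Lemma~\ref{lem_Xt}), and disposes of the martingale by the blunt anticoncentration bound $\probin{x_0}{Y_T\geqs0}=\tfrac12$; that is where the $\tfrac34=\tfrac14+\tfrac12$ comes from. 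You instead take expectations of the stopped process (Dynkin's formula), which kills the martingale entirely and yields the clean occupation-time bound $\expecin{x_0}{\int_0^\tau Z_s^2\,\6s}\leqs\sigma^2T^\star/(2\gamma\delta_1)$; you then finish with Markov's inequality and the lower bound on $\int_0^T Z_s^2\,\6s$ from Lemma~\ref{lem_Xt}. This is a Lyapunov-function style argument, and your remark that it \lq\lq exploits positivity of the drift on all of $[0,\tau]$\rq\rq\ is the conceptual gain: it converts the problem into a bound on an additive functional, at the cost of a Markov step. (Two small inaccuracies in wording, neither affecting validity: the ergodic-remainder probability is $\Order{1/C}$ for a fixed large multiple $C$ in the definition of $M$, not $\Order{1/c_1}$, so you must first take $C$ large and then $c_1\gg C$ --- or couple them, e.g.\ $C=\sqrt{c_1}$; and the paper's pathwise route does not in fact \lq\lq cost a further factor of $\delta_1$\rq\rq, precisely because it never tries to beat the martingale with the drift but uses the $\tfrac12$ floor instead.)
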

\begin{proof}
For $t\leqs\tau_{[-1,-\delta_1)}$, we have $-\bar y_t\geqs \delta_1$
and thus 
\begin{equation}
\label{tau08:1}
F(Z_t,\bar y_t,\phi_t) 
\geqs \frac{4\kappa^2\gamma}{1+\gamma^2} Z_t^2
\Bigbrak{\delta_1-\Order{\kappa}}
\geqs \frac{2\kappa^2\gamma}{1+\gamma^2} \delta_1 Z_t^2
\end{equation}
provided $\kappa\leqs\kappa_1=\Order{\delta_1}$. By the comparison
principle for SDEs, we thus have 
\begin{equation}
\label{tau08:2}
\bar y_t - \bar y_0
\geqs \frac{(\kappa\sigma)^2\delta_1}{1+\gamma^2} t\bigpar{X_t+1} 
- \frac{2\kappa\sigma}{1+\gamma^2} Y_t\;,
\end{equation}
where we have introduced the martingale
\begin{equation}
Y_t = -\frac{1+\gamma^2}{2\kappa\sigma} 
\int_0^t G(Z_s,\bar y_s, \phi_s) \6W_s\;,
\label{tau08:3}
\end{equation}
whose variance of is bounded above by a constant times $t$.
Now we have, for any $T>0$,
\begin{align}
\label{tau08:5}
\bigprobin{x_0}{\tau_{[-1,-\delta_1)}>T} 
&\leqs \bigprobin{x_0}{\bar y_T < -\delta_1} \\
\nonumber
&\leqs \bigprobin{x_0}{X_T < -K} 
+ \Bigprobin{x_0}{Y_T \geqs \frac{\kappa\sigma}{2}\delta_1 T(1-K) 
+ \frac{1+\gamma^2}{2\kappa\sigma}(\delta_1+\bar y_0)} \;.
\end{align}
Lemma~\ref{lem_Xt} implies that 
\begin{align}
\nonumber
\bigprobin{x_0}{X_T < -K} 
&\leqs \Bigprobin{x_0}{X_\infty < 
-K+\sigma^2} + \Bigprobin{x_0}{\overbar{X}_T<-\sigma^2} \\
&= \Phi\Bigpar{\sqrt{1-K+\sigma^2}\,}-\Phi\Bigpar{-\sqrt{1-K+\sigma^2}\,} 
+ \biggOrder{\frac{2\gamma+\sigma^2}{2\gamma\sigma^2 T}}
\;,
\label{tau08:6}
\end{align}
where $\Phi(x)=(2\pi)^{-1/2}\int_{-\infty}^x \e^{-u^2/2}\6u$ denotes the
distribution function of the standard normal law. Taking $K$ sufficiently
close to $1$ ($K=15/16$ will do) and $\gamma T$ and $\sigma^2 T$
sufficiently large allows to make this probability smaller than $1/4$. On
the other hand, taking $\delta_1 T$ larger than
$2T^\star/(1-K)$ allows us to bound the second
term on the right-hand side of~\eqref{tau08:5} by $\probin{x_0}{ Y_T \geqs
0} = 1/2$.
\end{proof}

It follows immediately from Lemma~\ref{lem_tau0} that
$\probin{x_0}{\tau_{[-1,-\delta_1)}>t}$ decreases exponentially fast
with rate $\gamma\wedge\delta_1/T^\star$, and that 
\begin{equation}
\label{tau09}
\bigexpecin{x_0}{\tau_{[-1,-\delta_1)}} = \biggOrder{\frac1\gamma
\vee
\frac{T^\star}{\delta_1}}\;.
\end{equation}

We now turn to estimating the time needed for $\bar y_t$ to reach a
neighbourhood of zero. More precisely, we shall estimate
\begin{equation}
\label{tau091}
\bigprobin{x_0}{\tau_{(-1+\delta_2,-c^\star\kappa)} > t}
= \bigprobin{x_0}{-1+\delta_2 < \bar y_s < -c^\star\kappa, \abs{Z_s}<1
\;\forall s\leqs t}\;,
\end{equation}
where the constant $c^\star$, which is related to error terms
in~\eqref{avrg03}, will be defined below. Let us start by
considering, as a slight simplification of~\eqref{avrg02}, the linear
equation
\begin{equation}
\label{tau1}
\6y^0_t = -\frac{4\kappa^2\gamma}{1+\gamma^2} y^0_t\,\6t + G(Z_t,\bar
y_t,\phi_t)\,\6W_t\;,
\end{equation}
with initial condition $y^0_0=\bar y_0$. Let 
\begin{equation}
\label{tau2}
\alpha(t) = \frac{4\kappa^2\gamma}{1+\gamma^2} \int_0^t Z_s^2\,\6s
= \frac{2t}{T^\star} (X_t+1)\;.
\end{equation}
Then It\^o's formula shows that $y^0_t$ can be represented as 
\begin{equation}
\label{tau3}
y^0_t = \bar y_0 \e^{-\alpha(t)} + 
\e^{-\alpha(t)} \int_0^t \e^{\alpha(s)} G(Z_s,\bar y_s,\phi_s)\,\6W_s\;.
\end{equation}

\begin{lemma}
\label{lem_tau1}
Let $\tau_0=\inf\setsuch{t>0}{y^0_t}$ be the first time $y^0_t$
reaches $0$. Then
\begin{equation}
\label{tau4}
\bigprob{\tau_0>t} 
\leqs \frac{2\abs{\bar y_0}}{\sqrt{2\pi v(t)}}\;,
\end{equation}
where 
\begin{equation}
\label{tau5}
v(t) = \int_0^t \bigexpec{\e^{2\alpha(s)} G(Z_s,\bar y_s,\phi_s)^2}
\,\6s\;.
\end{equation}
\end{lemma}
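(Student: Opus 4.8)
\emph{Reduction to a martingale hitting time.} If $\bar y_0=0$ the bound is trivial, so assume $\bar y_0\neq0$. By~\eqref{tau3} we may write $y^0_t=\e^{-\alpha(t)}\bigpar{\bar y_0+M_t}$, where
\begin{equation}
M_t=\int_0^t\e^{\alpha(s)}G(Z_s,\bar y_s,\phi_s)\,\6W_s
\end{equation}
is a continuous, square-integrable martingale with $M_0=0$ and quadratic variation $\langle M\rangle_t=\int_0^t\e^{2\alpha(s)}G(Z_s,\bar y_s,\phi_s)^2\,\6s$, so that $\bigexpec{\langle M\rangle_t}=v(t)$ by~\eqref{tau5}. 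Since $\e^{-\alpha(t)}>0$ we have $\tau_0=\inf\setsuch{t>0}{M_t=-\bar y_0}$, and since $-M$ has the same quadratic variation, hence the same $v(t)$, replacing $M$ by $-M$ interchanges the cases $\bar y_0>0$ and $\bar y_0<0$. It therefore suffices to treat $\bar y_0>0$, so that $\set{\tau_0>t}=\set{M_s>-\bar y_0\ \forall\,s\leqs t}$.

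\emph{Time change and reflection.} The plan is to turn this into a first-passage problem for Brownian motion. By the Dambis--Dubins--Schwarz theorem there is a standard Brownian motion $\set{B_w}_{w\geqs0}$, possibly on an enlarged probability space, such that $M_s=B_{\langle M\rangle_s}$ for all $s$; hence, with $\sigma_{-\bar y_0}=\inf\setsuch{w>0}{B_w=-\bar y_0}$,
\begin{equation}
\prob{\tau_0>t}=\prob{\sigma_{-\bar y_0}>\langle M\rangle_t}\;.
\end{equation}
For a deterministic $\ell>0$ the reflection principle gives $\prob{\sigma_{-\bar y_0}>\ell}=\prob{\abs{B_\ell}<\bar y_0}=\prob{\abs{N}<\bar y_0/\sqrt\ell}$ with $N$ a standard Gaussian, and bounding the Gaussian density by $(2\pi)^{-1/2}$ yields $\prob{\sigma_{-\bar y_0}>\ell}\leqs2\bar y_0/\sqrt{2\pi\ell}$. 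Were $\langle M\rangle_t$ equal to its mean $v(t)$, this would already conclude.

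\emph{The main obstacle: the random clock.} The quadratic variation $\langle M\rangle_t$ is random and correlated with $B$, so it cannot simply be replaced by $v(t)$; and since $\ell\mapsto(2\pi\ell)^{-1/2}$ is convex, Jensen's inequality points the wrong way. This is the real difficulty, and where I expect the work to lie. Since $w\mapsto\prob{\sigma_{-\bar y_0}>w}$ is non-increasing, it is enough to bound $\langle M\rangle_t$ below by a constant multiple of $v(t)$ (with the remaining probability, if any, absorbed through $\prob{\sigma_{-\bar y_0}>\langle M\rangle_t}\leqs\prob{\sigma_{-\bar y_0}>\ell}+\prob{\langle M\rangle_t<\ell}$). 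For this I would use $\e^{2\alpha(s)}\geqs1$ and the expansion~\eqref{avrg04}, by which $G(Z_s,\bar y_s,\phi_s)^2=\frac{4(\kappa\sigma)^2}{1+\gamma^2}(1-\bar y_s^2)\cos^2(\phi_s+\theta)+(\text{lower-order terms})$: because $\bar y_s$ varies slowly (at rate $\Order\kappa$) while $\phi_t$ rotates at rate $1+\Order{\kappa\sigma}$, the integral $\int_0^t(1-\bar y_s^2)\cos^2(\phi_s+\theta)\,\6s$ is, up to a bounded correction, one half of $\int_0^t(1-\bar y_s^2)\,\6s$, which is of order $t$ as long as $\bar y_s$ stays away from $\pm1$ --- the regime in which the lemma is used. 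Hence $\langle M\rangle_t$ is of order at least $t/T^\star$, comparable to $v(t)$ for $\kappa\sigma$ small, and feeding this into the reflection estimate gives $\prob{\tau_0>t}\leqs2\abs{\bar y_0}/\sqrt{2\pi v(t)}$, as claimed.
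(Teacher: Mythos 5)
Your approach is genuinely different from the paper's, and the difference is exactly where the gap lies. The paper applies the reflection principle directly to the linear process $y^0_t$, writes $y^0_t=\e^{-\alpha(t)}(\bar y_0+Y_t)$ with $Y_t=\int_0^t\e^{\alpha(s)}G\,\6W_s$, and then \emph{asserts} that $Y_t$ is a centred Gaussian with variance $v(t)$, so that $\prob{y^0_t>0}=\prob{Y_t>-\bar y_0}=\Phi(\bar y_0/\sqrt{v(t)})$ exactly. This sidesteps the random clock entirely; the full weight $\expec{\e^{2\alpha}G^2}$ lands cleanly in $v(t)$. (One may legitimately wonder whether that assertion is fully justified, since the integrand depends on $W$ through $Z_s,\bar y_s,\phi_s$ and stochastic integrals of genuinely random integrands are not Gaussian in general --- but that is the paper's choice, and it at least produces the stated constant.)

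You instead go through the Dambis--Dubins--Schwarz time change and confront the random clock $\langle M\rangle_t$ head-on. You correctly identify this as the real obstruction, but your proposed resolution cannot deliver the claimed inequality. You want to bound $\langle M\rangle_t$ below by some deterministic $\ell$ and then use monotonicity of $w\mapsto\prob{\sigma_{-\bar y_0}>w}$, absorbing the exceptional set via $\prob{\sigma>\langle M\rangle_t}\leqs\prob{\sigma>\ell}+\prob{\langle M\rangle_t<\ell}$. But the target bound has $v(t)=\bigexpec{\langle M\rangle_t}$ under the square root, so you would need $\ell\geqs v(t)$; no almost-sure lower bound on a non-degenerate random variable can reach or exceed its own mean, and if you take $\ell$ strictly below $v(t)$ the resulting $2\bar y_0/\sqrt{2\pi\ell}$ is already larger than the target, so the ``absorbed'' term only makes things worse. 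Worse still, the concrete lower bound you propose --- replacing $\e^{2\alpha(s)}$ by $1$ --- yields something of order $t/T^\star$, whereas the paper's own estimate~\eqref{tau11:4} shows $v(t)$ grows like $\e^{4t/T^\star}$; the two are off by an exponentially large factor, so the claim that your lower bound is ``comparable to $v(t)$'' is simply false for $t\gtrsim T^\star$, which is precisely the regime where Lemma~\ref{lem_tau1} is used. To make the time-change route work you would need a high-probability lower bound on $\langle M\rangle_t$ that retains the exponential factors $\e^{2\alpha(s)}$, together with an argument controlling the correlation between $B$ and $\langle M\rangle_t$, and even then you should not expect to recover the clean constant $2/\sqrt{2\pi}$ in front of $\abs{\bar y_0}/\sqrt{v(t)}$.
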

\begin{proof}
When $\bar y_0=0$, the processes $y^0_t$ and $-y^0_t$ have the same
distribution. Thus by the strong Markov property, we can apply Andr\'e's
reflection principle, yielding,
for $\bar y_0>0$, 
\begin{equation}
\label{tau6:1}
\bigprob{\tau_0<t} = 2 \bigprob{y^0_t\leqs0}\;, 
\end{equation}
and thus 
\begin{equation}
\label{tau6:2}
\bigprob{\tau_0>t} = 2 \bigprob{y^0_t>0} - 1\;. 
\end{equation}
Let $Y_t = \int_0^t \e^{\alpha(s)} G(Z_s,\bar y_s,\phi_s)\,\6W_s$.
Being a stochastic integral, $Y_t$ is a Gaussian random variable, of
expectation $0$, while It\^o's formula shows that its variance is $v(t)$. 
Now observe that since $\e^{\alpha(t)}$ is a positive random variable, we
have 
\begin{align}
\nonumber
\bigprob{y^0_t>0} 
&= \bigprob{\bar y_0+Y_t > 0} = \bigprob{Y_t > -\bar y_0} \\
&= \Phi\biggpar{\frac{\bar y_0}{\sqrt{v(t)}}}
\leqs \frac12 + \frac{\bar y_0}{\sqrt{2\pi v(t)}}\;.
\label{tau6:3}
\end{align}
Inserting this
in~\eqref{tau6:2} yields the result.
\end{proof}

Before applying this to~\eqref{avrg02}, we need another technical lemma,
allowing to control the behaviour of $\phi_t$. 

\begin{lemma}
\label{lem_tau2}
Fix $\delta>0$ and an initial condition $x_0=(Z_0,\bar
y_0,\phi_0)\in\cD_{(-1+\delta,1-\delta)}$. For any $\phi^\star > \phi_0$
and $\kappa<\delta/2$, we have 
\begin{equation}
\label{tau10}
\tau_{\phi^\star} \defby
\inf\setsuch{t>0}{\phi_t=\phi^\star} \in
\biggbrak{\frac{\phi^\star-\phi_0}{1+2\kappa/\delta},
\frac{\phi^\star-\phi_0}{1-2\kappa/\delta}} \cup
[\tau_{(-1+\delta,1-\delta)},\infty]\;.
\end{equation}
\end{lemma}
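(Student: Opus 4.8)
The equation for $\phi_t$ in~\eqref{avrg1} has no diffusion term, so $t\mapsto\phi_t$ is $C^1$ and the plan is a pathwise ODE-comparison argument for it, split according to whether $\phi_t$ reaches $\phi^\star$ before or after the process $x_t=(Z_t,\bar y_t,\phi_t)$ first leaves the favourable region $\cD_{(-1+\delta,1-\delta)}$. Concretely, introduce the event $A=\{\tau_{\phi^\star}<\tau_{(-1+\delta,1-\delta)}\}$. On the complement $A^c$ one has $\tau_{\phi^\star}\geqs\tau_{(-1+\delta,1-\delta)}$ by definition (with the convention $\tau_{\phi^\star}=+\infty$ if $\phi_t$ never reaches $\phi^\star$), hence $\tau_{\phi^\star}\in[\tau_{(-1+\delta,1-\delta)},\infty]$ and there is nothing more to prove; the whole content of the lemma lies on the event $A$.

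On $A$, for every $t\leqs\tau_{\phi^\star}$ we have $x_t\in\cD_{(-1+\delta,1-\delta)}$, so $\abs{Z_t}<1$ and $\bar y_t\in(-1+\delta,1-\delta)$. I would first establish that on this set the drift of $\phi_t$ stays within $2\kappa/\delta$ of $1$, i.e.\ $\abs{\dot\phi_t-1}\leqs 2\kappa/\delta$: the coefficient to control in $\6\phi_t$ is $\abs{y_t}/\sqrt{1-y_t^2}$, and the hypothesis $\kappa<\delta/2$ is used exactly to control the $\Order\kappa$ gap between $y_t$ and $\bar y_t$ coming from the averaging change of variables of Proposition~\ref{prop_avrg}, so that $1-y_t^2$ stays bounded below by a fixed multiple of $\delta$. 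Since $\kappa<\delta/2$ also makes $1-2\kappa/\delta$ strictly positive, $\phi_t$ is strictly increasing on $[0,\tau_{\phi^\star}]$; in particular $\phi^\star$ is reached and $\tau_{\phi^\star}<\infty$ on $A$ (when $\tau_{(-1+\delta,1-\delta)}=\infty$ this is because $\dot\phi_t\geqs1-2\kappa/\delta$ forces $\phi_t\to\infty$). Integrating the two-sided bound over $[0,\tau_{\phi^\star}]$ and using $\phi_{\tau_{\phi^\star}}-\phi_0=\phi^\star-\phi_0$ gives
\[
\Bigpar{1-\tfrac{2\kappa}{\delta}}\tau_{\phi^\star}\leqs\phi^\star-\phi_0\leqs\Bigpar{1+\tfrac{2\kappa}{\delta}}\tau_{\phi^\star},
\]
that is, $\tau_{\phi^\star}\in\bigbrak{(\phi^\star-\phi_0)/(1+2\kappa/\delta),\,(\phi^\star-\phi_0)/(1-2\kappa/\delta)}$ on $A$. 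Combined with the trivial case $A^c$, this is precisely the asserted dichotomy.

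The only step requiring genuine care is the drift bound $\abs{\dot\phi_t-1}\leqs2\kappa/\delta$ on $\cD_{(-1+\delta,1-\delta)}$: one must check that $\abs{y_t}/\sqrt{1-y_t^2}$, together with the $\Order{\kappa^2}$ correction terms in $\6\phi_t$ left over from the averaging transformation, is $\leqs1/\delta$ as soon as $\kappa<\delta/2$. This comes down to the elementary fact that $1-y_t^2$ is bounded below by a fixed multiple of $\delta$ whenever $\bar y_t\in(-1+\delta,1-\delta)$ and $\kappa<\delta/2$, together with $\abs{Z_t}<1$. Everything else is a one-line comparison of scalar ODEs, and the split into $A$ and $A^c$ is purely set-theoretic.
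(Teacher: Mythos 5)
Your proof is correct and follows essentially the same dichotomy as the paper's one-line argument: split on whether $\phi_t$ reaches $\phi^\star$ before $x_t$ leaves $\cD_{(-1+\delta,1-\delta)}$, bound the drift of $\phi_t$ on that event by $1\pm2\kappa/\delta$, and integrate. You are in fact slightly more careful than the paper, which silently applies the drift bound from~\eqref{avrg1} (written in terms of $y_t$) on a region defined in terms of $\bar y_t$; your explicit remark that $\kappa<\delta/2$ controls the $\Order{\kappa}$ gap between $y_t$ and $\bar y_t$ so that $1-y_t^2$ stays bounded below is precisely the missing justification.
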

\begin{proof}
Either $\tau_{\phi^\star}\geqs \tau_{(-1+\delta,1-\delta)}$ and we
are done, or $\tau_{\phi^\star}< \tau_{(-1+\delta,1-\delta)}$. In
the latter case,~\eqref{avrg1} implies that
$\brak{1-2\kappa/\delta}\6t \leqs \6\phi_t \leqs
\brak{1+2\kappa/\delta}\6t$, and the result follows by integrating from
$0$ to $\tau_{\phi^\star}$.
\end{proof}

\begin{prop}
\label{prop_tau02}
There exist constants $c^\star, c_2$
and $\kappa_2=\Order{\delta_2}$ such that whenever $\kappa<\kappa_2$, 
\begin{equation}
\label{tau11}
\biggprobin{x_0}{\tau_{(-1+\delta_2,-c^\star\kappa)} > t} \leqs 
\frac{c_2\abs{\bar y_0}}{\sqrt{\delta_2}}
\exp\biggset{-\frac{2t}{T^\star}}
\end{equation}
holds for all $x_0\in\cD_{(-1+\delta_2,-c^\star\kappa)}$ and all
$t\geqs T^\star$.
\end{prop}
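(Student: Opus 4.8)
The plan is to dominate $\bar y_t$ from below, as long as it stays in the strip $\cD_{(-1+\delta_2,-c^\star\kappa)}$, by the solution $y^0_t$ of the linear equation~\eqref{tau1} with the same driving Brownian motion and $y^0_0=\bar y_0$, and then to read off the exit time from Lemma~\ref{lem_tau1}. First I would fix $c^\star$: by~\eqref{avrg03}, on the event that $x_t\in\cD_{(-1+\delta_2,-c^\star\kappa)}$ — so $\abs{Z_t}\leqs1$ and $\abs{\bar y_t}\geqs c^\star\kappa$ — the error term in the drift of $\bar y_t$ is bounded in absolute value by a constant times $\kappa(1+\gamma^2)^{-1}$ times the coefficient $\alpha'(t)=4\kappa^2\gamma Z_t^2/(1+\gamma^2)$ of~\eqref{tau2}. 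Since $\bar y_t$ and $y^0_t$ are driven by the same noise with the \emph{same} path-dependent diffusion coefficient $G(Z_s,\bar y_s,\phi_s)$, their difference $D_t=\bar y_t-y^0_t$ solves the linear \emph{ordinary} equation $\dot D_t=-\alpha'(t)D_t+\text{err}_t$; integrating and using the above bound gives $\abs{D_t}\leqs C\kappa$ for a universal $C$, uniformly up to $\tau_{(-1+\delta_2,-c^\star\kappa)}$. Choosing $c^\star>C$ (which, together with $\kappa<\delta_2/2$ needed in Lemma~\ref{lem_tau2} and $c^\star\kappa<1-\delta_2$, fixes $\kappa_2=\Order{\delta_2}$), we get that while $\bar y_t$ stays in the strip, $y^0_t\leqs\bar y_t+C\kappa<-(c^\star-C)\kappa<0$; since $y^0_0=\bar y_0<0$, the process $y^0$ cannot reach $0$ before it has risen above $-(c^\star-C)\kappa$, hence before $\bar y$ has exited the strip. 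Denoting by $\tau_0$ the first hitting time of $0$ by $y^0$, this gives $\tau_{(-1+\delta_2,-c^\star\kappa)}\leqs\tau_0$, so $\bigprobin{x_0}{\tau_{(-1+\delta_2,-c^\star\kappa)}>t}\leqs\bigprob{\tau_0>t}$.

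By Lemma~\ref{lem_tau1}, $\bigprob{\tau_0>t}\leqs 2\abs{\bar y_0}(2\pi v(t))^{-1/2}$ with $v(t)=\int_0^t\bigexpec{\e^{2\alpha(s)}G(Z_s,\bar y_s,\phi_s)^2}\,\6s$, so the proposition reduces to showing a lower bound of the form $v(t)\geqs c\,\delta_2\,\e^{4t/T^\star}$ for $t\geqs T^\star$, with $c$ independent of $\gamma,\kappa,\sigma$. I would obtain this by combining three mechanisms. (i) \emph{Exponential weight:} by Jensen's inequality and $\expec{Z_s^2}\geqs(\sigma^2/2\gamma)(1-\e^{-2\gamma s})$ one has $2\expec{\alpha(s)}\geqs(4/T^\star)(s-1/2\gamma)$, and Lemma~\ref{lem_Xt} controls the fluctuations of $\alpha(s)$ about its mean; together these give $\bigexpec{\e^{2\alpha(s)}}\geqs c\,\e^{4s/T^\star}$. (ii) \emph{Strip bound:} while $x_s$ is in the strip, $1-\bar y_s^2\geqs\delta_2$, so by~\eqref{avrg04} $G(Z_s,\bar y_s,\phi_s)^2\geqs c\,(\kappa\sigma)^2\delta_2(1+\gamma^2)^{-1}\cos^2(\phi_s+\theta)$ up to an error of order $(\kappa\sigma)^2\kappa^2(1+\gamma^2)^{-2}$, negligible since $\kappa^2\ll\delta_2$. (iii) \emph{Averaging of the phase:} by Lemma~\ref{lem_tau2}, over a time interval of length $\simeq2\pi$ the phase $\phi_s$ increases by $2\pi$, so the running average of $\cos^2(\phi_s+\theta)$ over such an interval is essentially $1/2$, while the weight $\e^{2\alpha(s)}$ changes only by a factor $1+\Order{\kappa^2}$ over the same interval; summing over successive $\phi$-periods and recognising a Riemann sum for $\int_0^t\e^{2\alpha(s)}\,\6s$ yields $v(t)\gtrsim(\kappa\sigma)^2\delta_2(1+\gamma^2)^{-1}\int_0^t\bigexpec{\e^{2\alpha(s)}}\,\6s\gtrsim(\kappa\sigma)^2\delta_2(1+\gamma^2)^{-1}T^\star\e^{4t/T^\star}=\delta_2\e^{4t/T^\star}$ (up to constants, using $(\kappa\sigma)^2T^\star=1+\gamma^2$ and $t\geqs T^\star$). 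Substituting into the bound from Lemma~\ref{lem_tau1} gives exactly~\eqref{tau11}, and the stated exponential decay of $\bigprob{\tau>t}$ with rate $\simeq2/T^\star$ follows as well.

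The main obstacle is this lower bound on $v(t)$, and in particular the decoupling in step (iii): the oscillating factor $\cos^2(\phi_s+\theta)$ and the weight $\e^{2\alpha(s)}$ are functionals of the same noise path, so replacing $\cos^2$ by its mean is not a literal factorisation of expectations and has to be made quantitative, using the sharp phase control of Lemma~\ref{lem_tau2} together with the ergodic control of $\int_0^sZ_u^2\,\6u$ from Lemma~\ref{lem_Xt}. A related point is that the bounds $1-\bar y_s^2\geqs\delta_2$ and $\abs{Z_s}\leqs1$ only hold inside the strip, so to use them against the unconditional expectation defining $v(t)$ one should work with the processes stopped at $\tau_{(-1+\delta_2,-c^\star\kappa)}$, or else first prove~\eqref{tau11} on one window of length $\Order{T^\star}$ by the above estimate and then propagate it to all $t\geqs T^\star$ via the strong Markov property, as in Lemma~\ref{lem_tau0}. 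The remaining work — fixing $c^\star$ and $\kappa_2$ to absorb the $\Order{\kappa^3}$ corrections of~\eqref{avrg03}–\eqref{avrg04} and checking the various "$\Order{\kappa^2}$ versus $\delta_2$" smallness conditions — is routine.
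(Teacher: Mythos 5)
Your proposal takes essentially the same route as the paper's proof. You dominate $\bar y_t$ from below by the linear process $y^0_t$ of~\eqref{tau1} (the paper invokes the ``comparison principle''; your observation that $D_t=\bar y_t - y^0_t$ solves a pathwise ODE because the martingale parts cancel is exactly why that comparison is legitimate here), deduce $\tau_{(-1+\delta_2,-c^\star\kappa)}\leqs\tau_0$, then apply Lemma~\ref{lem_tau1} and reduce the problem to a lower bound on $v(t)$ obtained via the strip bound $1-\bar y_s^2\geqs\delta_2$, Jensen's inequality applied to $\e^{2\alpha(s)}$, and the phase control of Lemma~\ref{lem_tau2} to extract the factor $\cos^2(\phi_s+\theta)\geqs\tfrac12$ on a deterministic proportion of times — exactly the paper's~\eqref{tau11:3}--\eqref{tau11:5}. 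The two subtleties you flag (that the strip bound and $\abs{Z_s}\leqs1$ only hold before exit and so should be used against a stopped process, and that the oscillating factor and the weight $\e^{2\alpha(s)}$ are not independent) are genuine points that the paper handles informally; the paper's device — using Lemma~\ref{lem_tau2} to make the set $\set{s:\cos^2(\phi_s+\theta)\geqs 1/2}$ deterministic up to the exit time and then bounding the resulting geometric series — is the quantitative version of your step~(iii), and your suggested fix (stop the process, or prove the bound on one window of length $\Order{T^\star}$ and iterate via Lemma~\ref{lem_tau0}) is the natural way to tighten the paper's argument. In short, this is the same proof; the differences are expository.
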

\begin{proof}
For $t\leqs\tau_{(-1+\delta_2,1-\delta_2)}$, we have $\abs{Z_t}<1$
and thus 
\begin{equation}
\label{tau11:1}
F(Z_t,\bar y_t,\phi_t) 
\geqs -\frac{4\kappa^2\gamma}{1+\gamma^2} Z_t^2
\bigbrak{\bar y+c^\star\kappa}
\end{equation}
for some $c^\star>0$. By the comparison principle for SDEs, it follows that
$\bar y_t + c^\star\kappa \geqs y^0_t$, where $y^0_t$ obeys the
SDE~\eqref{tau1}. We can thus apply Lemma~\ref{lem_tau1}, and the proof
amounts to finding an exponentially growing lower bound for the variance
$v(t)$ defined in~\eqref{tau5}. 

Since for $t\leqs\tau_{(-1+\delta_2,-c^\star\kappa)}$, we have
$1-\bar y_t^2 \geqs 2\delta_2-\delta_2^2 \geqs \delta_2$, if follows that 
\begin{equation}
\label{tau11:3}
G(Z_t,\bar y_t,\phi_t)^2 \geqs
\frac{4\delta_2}{T^\star} 
\cos^2(\phi_t+\theta)
\end{equation}
for these times. We can thus write, using Jensen's inequality,  
\begin{align}
\nonumber
\frac{T^\star}{4\delta_2} v(t) 
&= \frac{T^\star}{4\delta_2}
\int_0^t \bigexpec{\e^{2\alpha(s)} G(Z_s,\bar y_s,\phi_s)^2}\,\6s \\
\nonumber
&\geqs \frac12
\int_0^t \indexfct{\cos^2(\phi_s+\theta)\geqs 1/2} 
\bigexpec{\e^{2\alpha(s)}}\,\6s
\\
\nonumber
&\geqs \frac12
\int_0^t \indexfct{\cos^2(\phi_s+\theta)\geqs 1/2} \e^{2
\bigexpec{\alpha(s)}}\,\6s
\\
&\geqs \frac12
\int_0^t \indexfct{\cos^2(\phi_s+\theta)\geqs 1/2}
\e^{4s/T^\star}\,\6s\;.
\label{tau11:4}
\end{align}
Lemma~\ref{lem_tau2} implies that (taking $\kappa\leqs\delta_2/4$)
during any time interval of length
$\pi/(1-2\kappa/\delta_2)$, $\cos^2(\phi_s+\theta)$ stays larger than $1/2$
for a
time span of length $\pi/2(1+2\kappa/\delta_2)$ at least. This implies that
\begin{align}
\nonumber
\int_{k\pi/(1-2\kappa/\delta_2)}^{(k+1)\pi/(1-2\kappa/\delta_2)}
\indexfct{\cos^2\phi_s\geqs 1/2}
\e^{4 s/T^\star}\,\6s
&\geqs \frac{T^\star}{4} 
\e^{4k\pi/T^\star(1-2\kappa/\delta_2)}
\bigbrak{\e^{2\pi/T^\star(1+2\kappa/\delta_2)}-1} \\
&\geqs \frac{\pi}{2(1+2\kappa/\delta_2)} 
\e^{4k\pi/T^\star(1-2\kappa/\delta_2)}\;.
\label{tau11:5}
\end{align}
This allows the variance $v(t)$ to be bounded below by a geometric series,
whose sum is of order $\delta_2(\e^{4t/T^\star}-1)$. The
result thus follows from Lemma~\ref{lem_tau1}. 
\end{proof}

Finally, we consider the problem of $\bar y_t$ reaching any level in
$(-1,1)$.

\begin{prop}
\label{prop_tau03}
For any $\delta_3>0$, there exist constants $c_3$, $h$ and
$\kappa_3=\Order{\delta_3}$ such that whenever $\kappa<\kappa_3$, 
\begin{equation}
\label{tau12}
\biggprobin{x_0}{\tau_{(-1+\delta_3,1-\delta_3)} > 
\frac{c_3}{\delta_3} \biggpar{\frac1\gamma
\vee T^\star}} \leqs 
\Phi\biggpar{\frac{1-\delta_3}{h\sqrt{\delta_3}}}
\end{equation}
holds for all $x_0\in\cD_{(-1+\delta_3,1-\delta_3)}$.
\end{prop}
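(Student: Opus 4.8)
The plan is to imitate the proof of Proposition~\ref{prop_tau02}: compare $\bar y_t$ with the explicitly solvable linear equation~\eqref{tau1} and then read off a Gaussian-type estimate for the probability of not having left the interval $(-1+\delta_3,1-\delta_3)$ by time $T$, the right choice of $T$ being of order $\frac1{\delta_3}\bigpar{\frac1\gamma\vee T^\star}$. Since only an upper bound on $\probin{x_0}{\tau_{(-1+\delta_3,1-\delta_3)}>t}$ is wanted, it suffices to show that $\bar y_t$ is likely to reach the right endpoint $1-\delta_3$ (exiting through the left endpoint, or through $\abs{Z_t}=1$, only helps); by the near-symmetry $\bar y\mapsto-\bar y$, $\phi\mapsto\phi+\pi$ of the leading terms of~\eqref{avrg03}--\eqref{avrg04}, the left endpoint obeys the same estimate and one keeps the larger of the two.

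First I would restrict to $t\leqs\tau$, where $\tau\defby\tau_{(-1+\delta_3,1-\delta_3)}$: on this range $\abs{\bar y_t}\leqs1-\delta_3$, hence $1-\bar y_t^2\geqs\delta_3(2-\delta_3)\geqs\delta_3$, and $\abs{Z_t}\leqs1$, so the error terms in~\eqref{avrg03}--\eqref{avrg04} are uniformly $\Order{\kappa}$ once $\kappa<\kappa_3=\Order{\delta_3}$. As in~\eqref{tau11:1} the restoring drift then satisfies $F(Z_t,\bar y_t,\phi_t)\geqs-\frac{4\kappa^2\gamma}{1+\gamma^2}Z_t^2\brak{\bar y_t+c^\star\kappa}$, and the comparison argument of Proposition~\ref{prop_tau02} (the two equations share the same diffusion coefficient, so their difference obeys a linear ODE inequality) gives $\bar y_t+c^\star\kappa\geqs y^0_t$ for $t\leqs\tau$, with $y^0_t$ the process~\eqref{tau3} and $\alpha(t)$ as in~\eqref{tau2}. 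Thus on $\set{\tau>T}$ one has $y^0_t\leqs1-\delta_3+c^\star\kappa$ for all $t\leqs T$, whence
\begin{equation*}
\probin{x_0}{\tau>T}\leqs\Bigprobin{x_0}{\sup_{0\leqs t\leqs T}y^0_t\leqs1-\delta_3+c^\star\kappa}\;.
\end{equation*}
I would bound the last probability exactly as in Lemma~\ref{lem_tau1}: writing $y^0_t=\e^{-\alpha(t)}\bigpar{\bar y_0+Y_t}$ with $Y_t=\int_0^t\e^{\alpha(s)}G(Z_s,\bar y_s,\phi_s)\,\6W_s$ of variance $v(t)$ as in~\eqref{tau5}, a reflection estimate reduces matters to a lower bound on $v(T)$ and an upper bound on $\alpha(T)$. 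The lower bound $v(T)\geqs\const\delta_3\bigpar{\e^{4T/T^\star}-1}$ comes from the geometric-series argument of~\eqref{tau11:4}--\eqref{tau11:5}, now using $1-\bar y_t^2\geqs\delta_3$ together with Lemma~\ref{lem_tau2} to control the fraction of time spent with $\cos^2(\phi_t+\theta)\geqs1/2$; the control of $\alpha(t)=\frac{2t}{T^\star}(X_t+1)$ is provided by Lemma~\ref{lem_Xt}, which forces $t$ to exceed a multiple of $1/\gamma$ before $\overbar{X}_t$ becomes negligible --- this is the source of the factor $\frac1\gamma\vee T^\star$. Taking $T=\frac{c_3}{\delta_3}\bigpar{\frac1\gamma\vee T^\star}$ with $c_3$ and $h$ large enough, the Gaussian tail that emerges is exactly $\Phi\Bigpar{\frac{1-\delta_3}{h\sqrt{\delta_3}}}$.

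The main obstacle --- and the reason the bound saturates at $\Phi\Bigpar{\frac{1-\delta_3}{h\sqrt{\delta_3}}}$, which tends to $1$ (not to $0$) as $\delta_3\to0$ --- is that the target set now consists of the two endpoints of an interval on which the diffusion coefficient $G$ degenerates (it vanishes as $\bar y\to\pm1$) while the drift $F$ is restoring, i.e.\ pulls towards the centre and hence \emph{away} from the endpoint one is trying to reach; this is the analytic counterpart of the remark that the first-passage function $e(y)$ may blow up as $y\nearrow1$. A secondary technical point, already implicit in Proposition~\ref{prop_tau02}, is that $v(t)$ in~\eqref{tau5} integrates over times possibly larger than $\tau$, where $1-\bar y_s^2\geqs\delta_3$ may fail; I would handle this by running the whole estimate on $\set{\tau>T}$, equivalently by stopping $Z$, $\bar y$ and $\phi$ at $\tau$ before invoking the reflection argument, so that the $\phi$-periodicity input of Lemma~\ref{lem_tau2} applies throughout $[0,T]$.
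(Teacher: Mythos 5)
Your broad strategy matches the paper's: compare $\bar y_t$ with the linear process $y^0_t$ of~\eqref{tau1}, write $y^0_t=\e^{-\alpha(t)}(\bar y_0+Y_t)$, and combine a lower bound on the variance $v(t)$ with control of $\alpha(t)$ via Lemma~\ref{lem_Xt}. However, the central step --- ``bound the last probability exactly as in Lemma~\ref{lem_tau1}'' --- is a genuine gap, for two reasons.

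First, Lemma~\ref{lem_tau1} is a reflection argument at the \emph{symmetry level} $0$ of $y^0_t$: it exploits that $y^0$ and $-y^0$ have the same law once $y^0$ reaches $0$, and it produces a bound of the form $2\abs{\bar y_0}/\sqrt{2\pi\,v(t)}$, not a Gaussian CDF $\Phi(\cdot)$. The level $1-\delta_3$ you now need to reach is not a symmetry level of $y^0_t$, so that reflection is simply unavailable. Second, and more importantly, the threshold you need $Y_t$ to exceed is $L\e^{\alpha(t)}-\bar y_0$, which is \emph{random} (and correlated with $Y$, since $\alpha$ and $Y$ are both driven by $W$). Neither a reflection principle for $Y$ nor a single-time Gaussian tail can be applied until this threshold has been made deterministic. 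Your proposal notes that Lemma~\ref{lem_Xt} ``forces $t$ to exceed a multiple of $1/\gamma$,'' which is the right intuition, but does not say how this is threaded into the argument.

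This is precisely what the paper's proof supplies and what distinguishes Proposition~\ref{prop_tau03} from Proposition~\ref{prop_tau02}. The paper splits the event $\bigset{\sup_{0\leqs t\leqs T}y^0_t<L}$ according to whether $\inf_{T/2\leqs t\leqs T}\alpha(t)/t\geqs 2/T^\star$ holds or not: on the first alternative, $\inf_{T/2\leqs t\leqs T}X_t\geqs 0$, which by Lemma~\ref{lem_Xt} has probability decaying like $\e^{-\gamma T}$ (this is where $1/\gamma$ enters); on the second, there is a $t\in[T/2,T]$ with $\alpha(t)<2t/T^\star$, so that the constraint $y^0_t<L$ becomes the \emph{deterministic} inequality $Y_t<L\e^{2t/T^\star}-\bar y_0$, to which the Gaussian tail $\Phi\bigpar{(L\e^{2t/T^\star}-\bar y_0)/\sqrt{v(t)}}$ applies directly --- no reflection at all. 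Combined with $v(T/2)\geqs(\delta_3/2)\e^{2T/T^\star}$ from the estimate~\eqref{tau11:4}, this gives $\Phi\bigpar{\mathrm{const}/\sqrt{\delta_3}}$ with $L=1-\delta_3$. Your write-up needs this two-event decomposition (or some equivalent device turning the random threshold into a deterministic one) to close the argument; invoking the reflection of Lemma~\ref{lem_tau1} as stated does not do it.
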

\begin{proof}
As in the proof of Proposition~\ref{prop_tau02}, we can bound $\bar y_t$
below by $y^0_t-\kappa$, where $y^0_t$ is the solution of the linear
equation~\eqref{tau1}. Now for any $T, L>0$, we can write 
\begin{align}
\nonumber
\Bigprobin{x_0}{\sup_{0\leqs t\leqs T}y^0_t < L} 
\leqs{}& \biggprobin{x_0}{\inf_{T/2\leqs t\leqs T} \frac{\alpha(t)}{t}
\geqs\frac{2}{T^\star} } \\
&{}+ \biggprobin{x_0}{\sup_{T/2\leqs t\leqs T}y^0_t < L,\;
\exists t\in[T/2,T] \colon 
\alpha(t)<\frac{2t}{T^\star}}\;.
\label{tau12:1}
\end{align}
Using the process $X_t$ introduced in~\eqref{tau06A}, the first
term on the right-hand side can be rewritten as 
\begin{equation}
\label{tau12:2}
\biggprobin{x_0}{ 
\inf_{T/2\leqs t\leqs T} X_t \geqs 0}\;,
\end{equation}
which decreases exponentially fast, like $\e^{-\gamma T}$. 
Choosing $\gamma T \geqs \Order{1/\delta_3}$ makes this term of order
$\e^{-1/\delta_3}$. 
Let again $Y_t$ denote the stochastic integral $\int_0^t \e^{\alpha(s)}
G(Z_s,\bar y_s,\phi_s)\,\6W_s$. The
second term on the right-hand side of~\eqref{tau12:1} can be bounded by 
\begin{align}
\nonumber
&\biggprobin{x_0}{\sup_{T/2\leqs t\leqs T}\e^{-\alpha(t)}(\bar y_0+Y_t) <
L,
\;
\exists t\in[T/2,T] \colon
\alpha(t)<\frac{2t}{T^\star}} \\
\nonumber
&\quad\leqs
\biggprobin{x_0}{\exists t\in[T/2,T] \colon Y_t < L
\e^{2t/T^\star} - \bar y_0} \\
&\quad\leqs \sup_{t\in[T/2,T]}\Phi\biggpar{\frac{L
\e^{2t/T^\star} - \bar y_0}{\sqrt{v(t)}}}\;,
\label{tau12:3}
\end{align}
where $v(t)$ can be estimated with the help of~\eqref{tau11:4}. 
Taking $T$ sufficiently large that $\e^{-2T/T^\star}
\leqs \delta_3/2$ allows to bound $v(T/2)$ below by $(\delta_3/2)
\e^{2T/T^\star}$.
The argument of $\Phi$ can thus be bounded by a constant
times $1/\sqrt{\delta_3}$, while $L=1-\delta_3$. This yields the result.
\end{proof}

A partially matching lower bound is given by the following estimate.

\begin{prop}
\label{prop_tau04}
There exists a constant $c_4$ such that for any $L\in(0,1)$ and any
initial condition $x_0=(Z_0,\bar y_0,\phi_0)$ with
$\bar y_0>-L$,  
\begin{equation}
\label{tau13}
\biggprobin{x_0}{\inf_{0\leqs t\leqs T^\star} 
\bar{y}_t < -L} 
\leqs \exp\Bigset{-c_4\bigbrak{L+\bar y_0-c^\star\kappa}^2}\;.
\end{equation}
\end{prop}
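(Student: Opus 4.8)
The plan is to reduce the assertion, exactly as in the proofs of Propositions~\ref{prop_tau01}--\ref{prop_tau03}, to a Gaussian tail estimate for a martingale, and then to extract that estimate from the reflection principle after controlling the relevant quadratic variation.

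First I would work on the event $\{\abs{Z_t}\leqs1\ \forall t\leqs T^\star\}$ where, as throughout this section, the drift bound~\eqref{tau11:1} is available, so that $F(Z_t,\bar y_t,\phi_t)\geqs-\frac{4\kappa^2\gamma}{1+\gamma^2}Z_t^2(\bar y_t+c^\star\kappa)$, which is in particular $\geqs0$ whenever $\bar y_t\leqs-c^\star\kappa$ (the complement being treated by the same rare-event argument used for the other propositions). By the comparison principle for SDEs, $\bar y_t\geqs y^0_t-c^\star\kappa$ with $y^0_t$ the solution of the linear equation~\eqref{tau1} and $y^0_0=\bar y_0$, so it suffices to bound $\bigprob{\inf_{0\leqs t\leqs T^\star}y^0_t<-(L-c^\star\kappa)}$; I may assume $L>c^\star\kappa$, as otherwise $L+\bar y_0-c^\star\kappa$ is tiny and the claimed bound is essentially vacuous. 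Using the representation~\eqref{tau3}, $y^0_t=\e^{-\alpha(t)}(\bar y_0+Y_t)$ with $Y_t=\int_0^t\e^{\alpha(s)}G(Z_s,\bar y_s,\phi_s)\,\6W_s$ and $\alpha(t)\geqs0$ nondecreasing; since $\e^{-\alpha(t)}\in(0,1]$, the inequality $y^0_t<-(L-c^\star\kappa)<0$ forces $\bar y_0+Y_t<0$ and then $\bar y_0+Y_t\leqs\e^{-\alpha(t)}(\bar y_0+Y_t)=y^0_t<-(L-c^\star\kappa)$, i.e. $Y_t<-R$ with $R=L+\bar y_0-c^\star\kappa$. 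Hence $\bigprobin{x_0}{\inf_{t\leqs T^\star}\bar y_t<-L}\leqs\bigprob{\inf_{0\leqs t\leqs T^\star}Y_t<-R}$.

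It remains to estimate, for the continuous martingale $Y$, the probability that it drops below $-R$ before time $T^\star$. By the Dambis--Dubins--Schwarz time change, $Y_t=B_{\langle Y\rangle_t}$ for a Brownian motion $B$, so on $\{\langle Y\rangle_{T^\star}\leqs v\}$ one has $\inf_{t\leqs T^\star}Y_t\geqs\inf_{s\leqs v}B_s$ and therefore, by the reflection principle, $\bigprob{\inf_{t\leqs T^\star}Y_t<-R,\ \langle Y\rangle_{T^\star}\leqs v}\leqs2\Phi(-R/\sqrt v)\leqs\e^{-R^2/2v}$. Now $\langle Y\rangle_{T^\star}=\int_0^{T^\star}\e^{2\alpha(s)}G_s^2\,\6s$; on $\{\abs{Z_t}\leqs1\}$ the diffusion coefficient obeys $G_s^2\leqs\const/T^\star$ by~\eqref{avrg04}, whence $\langle Y\rangle_{T^\star}\leqs\const\,\e^{2\alpha(T^\star)}$ (using that $\alpha$ is nondecreasing), and by~\eqref{tau2} the exponent is $\alpha(T^\star)=2(X_{T^\star}+1)$, which by Lemma~\ref{lem_Xt} equals, up to the vanishing correction $\overbar X_{T^\star}$, a fixed random variable with a Gaussian-type upper tail. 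One then splits according to $\{\alpha(T^\star)\leqs m\}$ and its complement: on the first event $\langle Y\rangle_{T^\star}\leqs\const\,\e^{2m}$, giving the contribution $\e^{-R^2\e^{-2m}/2\const}$, while the complement has probability at most $\const\,\e^{-m/4}+\bigprob{\abs{\overbar X_{T^\star}}>\const}$, controlled by~\eqref{tau06C}. Choosing $m$ appropriately (a constant of order $\log(1/c_4)$) and adding the two contributions yields the bound $\exp\{-c_4(L+\bar y_0-c^\star\kappa)^2\}$.

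The step I expect to be the real obstacle is this last one: because $\e^{\alpha(s)}$ is unbounded, $\langle Y\rangle_{T^\star}$ is not controlled by a universal constant, so the split-and-optimize must be carried out carefully --- keeping track of the $\abs{Z_t}\leqs1$ restriction, the error terms in~\eqref{avrg03}--\eqref{avrg04}, and the fact that $R\leqs2$, which forces $c_4$ small and makes the estimate correspondingly mild. An alternative route that avoids $\e^\alpha$ entirely is to work directly with $\bar y_t=\bar y_0+\int_0^tF_s\,\6s+M_t$, where $M_t=\int_0^tG_s\,\6W_s$ has $\langle M\rangle_{T^\star}\leqs\const$ on $\{\abs{Z_t}\leqs1\}$, and to exploit that the drift is nonnegative below $-c^\star\kappa$ via a pathwise decomposition at the last time before the dip at which $\bar y=-c^\star\kappa$ (together with the strong Markov property at the first hitting time of $-c^\star\kappa$); this gives the same conclusion at the cost of a more delicate patching.
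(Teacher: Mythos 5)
Your proposal follows essentially the same route as the paper: comparison with the linear process $y^0_t$, then the observation that $\alpha(t)\geqs 0$ and $y^0_t<0$ together force $\bar y_0+Y_t\leqs y^0_t$, reducing everything to a lower tail estimate on the martingale $Y_t$. Where you diverge is in the last step. The paper applies Doob's submartingale inequality to $\e^{\eta Y_t}$ (citing an exponential-martingale lemma) and then disposes of the quadratic variation with the single sentence "the variance $v(T^\star)$ can be bounded above by a constant", whereas you use the Dambis--Dubins--Schwarz time change plus the reflection principle and then \emph{explicitly} split on the size of $\alpha(T^\star)$, invoking Lemma~\ref{lem_Xt} to control the complementary event. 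These two martingale tools are of equal strength, so the choice is cosmetic; what is substantive is that you identify the genuine subtlety that the paper glosses over --- $\e^{2\alpha(s)}$ is not uniformly bounded (on $\{\abs{Z_t}\leqs 1\}$ one only gets $\alpha(T^\star)\leqs 4\gamma/\sigma^2$, which is not a universal constant), so $\langle Y\rangle_{T^\star}$ is not almost surely bounded and the split-and-optimize you describe really is needed. Your alternative route in the final paragraph --- forgetting the integrating factor, writing $\bar y_t=\bar y_0+\int_0^t F_s\,\6s+M_t$ with $\langle M\rangle_{T^\star}\leqs\const$ on $\{\abs{Z_t}\leqs1\}$, and using the nonnegativity of the drift for $\bar y\leqs-c^\star\kappa$ together with a last-exit decomposition at level $-c^\star\kappa$ --- avoids $\e^{\alpha}$ entirely and is in fact the cleanest way to obtain~\eqref{tau13}; it gives the estimate directly from a Bernstein-type bound on $\sup_{s\leqs t\leqs T^\star}(M_s-M_t)$, and I would recommend it over both the paper's sketch and your primary route.
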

\begin{proof}
Using the same notations as in the previous proof, and the fact that
$\alpha(t)\geqs0$, we can bound the probability by 
\begin{equation}
\label{tau14:1}
\biggprobin{x_0}{\sup_{0\leqs t\leqs T^\star} 
(-Y_t) > L + \bar y_0 - c^\star\kappa}\;.
\end{equation}
Since $Y_t$ is a martingale, a standard argument based on Doob's
submartingale inequality applied to $\e^{\eta Y_t}$
(cf.~\cite[Lemma~3.3]{BG6}) allows to bound this probability by the
exponential of $-c_4(L+\bar y_0 - c^\star\kappa)^2/v(T^\star)$. The
variance $v(T^\star)$ can be bounded above by a constant.
\end{proof}


\subsection{Renewal equations}
\label{ssec_renewal}

We now have to patch together the different estimates obtained in the
previous section in order to bound the expected transition time between
general domains in phase space. We shall do this by using renewal
equations, that we will solve using Laplace transforms. 
Recall that the Laplace transform of a random variable $\tau$ taking values 
in $\R_+$ is given by
\begin{equation}
\bigexpec{\e^{\lambda\tau}}
= \int_0^{\infty} \e^{\lambda t} \dpar{}{t}\prob{\tau\leqs t}\,\6t 
= 1 + \lambda \int_0^{\infty} \e^{\lambda t} \prob{\tau> t}\,\6t\;,
\label{renew01}
\end{equation}
provided $\lambda$ is such that $\lim_{t\to\infty}\e^{\lambda
t}\prob{\tau>t}=0$. If $\tau$ takes values in $\R_+\cup\set{\infty}$, we 
can also define
\begin{equation}
\bigexpec{\e^{\lambda\tau}\indexfct{\tau<\infty}}
= \prob{\tau<\infty} 
+ \lambda \int_0^{\infty} \e^{\lambda t} \prob{t<\tau<\infty}\,\6t\;,
\label{renew02}
\end{equation}
provided $\lambda$ is such that $\lim_{t\to\infty}\e^{\lambda
t}\prob{t<\tau<\infty}=0$. 

It is instructive to consider first the case of a one-dimensional process
$\set{y_t}_{t\geqs0}$, solution to a one-dimensional autonomous stochastic
differential equation. We assume that $y_t\in[-1,1]$ for all $t$, and
introduce levels $-1<h_1<h_2<h_3<1$ (\figref{fig3}a). We set
$\cD_1=[-1,h_2]$,
$\cD_2=[h_1,h_3]$ and introduce first-exit times $\tau_1=\tau_{\cD_1}$,
$\tau_2=\tau_{\cD_2}$, $\tau=\tau_{\cD_1\cup\cD_2}$ and 
\begin{equation}
\label{renew1}
\taudown = \inf\setsuch{t>0}{y_t<h_1}\;.
\end{equation}
The following result allows to express the expectation of $\tau$ using
informations on $\tau_1$, $\tau_2$ and $\taudown$. 

\begin{prop}
\label{prop_renewal1}
Assume there exists an $\eps>0$ such that the Laplace transforms
$\expecin{h_1}{\e^{\lambda\tau_1}}$ and $\expecin{h_2}{\e^{\lambda\tau_2}}$
are finite for all $\lambda<\eps$. Then 
\begin{equation}
\label{renew2}
\bigexpecin{h_1}{\tau} = 
\frac{\bigexpecin{h_1}{\tau_1}+\bigexpecin{h_2}{\tau_2}}
{\bigprobin{h_2}{\tau_2<\taudown}}\;.
\end{equation}
\end{prop}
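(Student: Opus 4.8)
The plan is to exploit the fact that the one-dimensional diffusion $\set{y_t}$ regenerates each time it visits the level $h_1$, and to encode this regeneration in an algebraic identity between Laplace transforms which can then be differentiated at $\lambda=0$. Concretely, using $y_t\geqs-1$ throughout: starting from $h_1$ the process leaves $\cD_1=[-1,h_2]$ by hitting $h_2$, so $y_{\tau_1}=h_2$ a.s., and $\tau=\tau_{\cD_1\cup\cD_2}$ is the first hitting time of $h_3$. By the strong Markov property at $\tau_1$, the shifted process is a fresh copy of the diffusion issued from $h_2$, independent of $\cF_{\tau_1}$; run it until it leaves $\cD_2=[h_1,h_3]$ at time $\tau_2$, so that $y_{\tau_2}\in\set{h_1,h_3}$ by continuity. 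On $\set{y_{\tau_2}=h_3}$ the path has stayed in $[h_1,h_3]$, hence never gone below $h_1$, so it has already exited $[-1,h_3]$ and $\tau=\tau_1+\tau_2$; moreover this event coincides up to a null set with $\set{\tau_2<\taudown}$ (on $\set{y_{\tau_2}=h_1}$ the path leaves $[h_1,h_3]$ downwards, whence $\taudown=\tau_2$). On $\set{y_{\tau_2}=h_1}$, a second application of the strong Markov property at $\tau_2$ shows that the time still needed to reach $h_3$ is an independent copy of $\tau$ under $\fP^{h_1}$, so $\tau$ restarts after a delay $\tau_1+\tau_2$ --- i.e.\ $\tau$ is a geometric sum of i.i.d.\ delays with success probability $p:=\bigprobin{h_2}{\tau_2<\taudown}$.

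Next I would pass to Laplace transforms. Put $u(\lambda)=\bigexpecin{h_1}{\e^{\lambda\tau}}$, $\ell_1(\lambda)=\bigexpecin{h_1}{\e^{\lambda\tau_1}}$, and the partial transforms $g_j(\lambda)=\bigexpecin{h_2}{\e^{\lambda\tau_2}\indexfct{y_{\tau_2}=h_j}}$ for $j\in\set{1,3}$, so that $g_1+g_3=\bigexpecin{h_2}{\e^{\lambda\tau_2}}=:\ell_2$, with $g_3(0)=p$ and $g_1(0)=1-p$. The decomposition above, together with the independence of the successive pieces, gives for every $\lambda\geqs0$ the renewal identity
\begin{equation*}
u(\lambda)=\ell_1(\lambda)\bigbrak{g_3(\lambda)+g_1(\lambda)\,u(\lambda)}\;,
\end{equation*}
hence $u(\lambda)=\ell_1(\lambda)g_3(\lambda)\big/\bigpar{1-\ell_1(\lambda)g_1(\lambda)}$ whenever $\ell_1(\lambda)g_1(\lambda)<1$. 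At $\lambda=0$ the denominator equals $1-(1-p)=p>0$; by hypothesis $\ell_1$ and $\ell_2$, and therefore $g_1$ and $g_3$, are finite and real-analytic on $(-\infty,\eps)$, so the denominator stays positive near $0$, $u$ is real-analytic there (in particular $\tau$ has finite exponential moments near $0$), and $\bigexpecin{h_1}{\tau}=u'(0)$. Differentiating the closed form at $0$ and using $\ell_1'(0)=\bigexpecin{h_1}{\tau_1}$, $g_1'(0)+g_3'(0)=\ell_2'(0)=\bigexpecin{h_2}{\tau_2}$ and $g_1(0)+g_3(0)=1$, the cross terms in $p$ cancel and one is left with $u'(0)=\bigpar{\ell_1'(0)+\ell_2'(0)}\big/p$, which is exactly~\eqref{renew2}.

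The calculus with Laplace transforms is routine; the points I would take care over are the strong-Markov bookkeeping and, in particular, the a.s.\ identity $\set{\tau_2<\taudown}=\set{y_{\tau_2}=h_3}$, which uses regularity of the boundary point $h_1$ for the diffusion, together with the strict positivity $p>0$ of the probability of reaching $h_3$ before returning below $h_1$. Both rest on non-degeneracy of the one-dimensional SDE; granting them, the renewal identity, its solution and the differentiation at $\lambda=0$ are elementary. I therefore expect the exit-location and regeneration accounting to be the main obstacle.
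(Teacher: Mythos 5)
Your proof is correct and follows essentially the same route as the paper: a renewal identity for the Laplace transform of $\tau$, solved algebraically and then differentiated at $\lambda=0$, with the denominator reducing to $p=\bigprobin{h_2}{\tau_2<\taudown}$ after the cross terms cancel. The only superficial difference is bookkeeping: the paper sets up two coupled renewal equations (one for initial conditions in $\cD_1$, one for $\cD_2$) and eliminates $\expecin{h_2}{\e^{\lambda\tau}}$, whereas you identify the regenerative structure directly and obtain the single identity $u=\ell_1(g_3+g_1 u)$; since $g_1+g_3=\ell_2$ this is the paper's formula $u=\ell_1(\ell_2-g_1)/(1-\ell_1 g_1)$ with $g_1=\bigexpecin{h_2}{\e^{\lambda\taudown^\#}\indexfct{\taudown^\#<\infty}}$.
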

\begin{proof}
Since $\tau>\tau_1$ for all initial conditions in $\cD_1$, 
we have, by the strong Markov property and time homogeneity, 
the renewal equation
\begin{align}
\nonumber
\bigprobin{h_1}{\tau>t} 
&= \bigprobin{h_1}{\tau_1>t} 
+ \bigexpecin{h_1}{\indexfct{\tau_1\leqs t}
\bigprobin{h_2,\tau_1}{\tau>t}} \\
&= \bigprobin{h_1}{\tau_1>t} + \int_0^t \dpar{}{s}
\bigprobin{h_1}{\tau_1\leqs s} \bigprobin{h_2}{\tau>t-s} \,\6s\;.
\label{renew3:1}
\end{align}
Taking the Laplace transform of~\eqref{renew3:1} and
making the change of variables $t-s=u$ shows that 
\begin{align}
\nonumber
\expecin{h_1}{\e^{\lambda\tau}}
&= \expecin{h_1}{\e^{\lambda\tau_1}}
+ \lambda\int_0^{\infty} \e^{\lambda s} 
\dpar{}{s}\probin{h_1}{\tau_1\leqs s}\,\6s 
\int_0^\infty \e^{\lambda u} \probin{h_2}{\tau>u}\,\6u \\
\nonumber
&= \expecin{h_1}{\e^{\lambda\tau_1}} +
\expecin{h_1}{\e^{\lambda\tau_1}}\bigpar{\expecin{h_2}{\e^{\lambda\tau}}-1}\\
&= \expecin{h_1}{\e^{\lambda\tau_1}}\expecin{h_2}{\e^{\lambda\tau}}\;.
\label{renew3:3}
\end{align}
Now since for initial conditions in $\cD_2$, $\tau_2=\tau\wedge\taudown$,
we also have the renewal equation 
\begin{equation}
\bigprobin{h_2}{\tau>t} 
= \bigprobin{h_2}{\tau_2>t} 
+ \bigexpecin{h_2}{\indexfct{\taudown\leqs t\wedge\tau_2}
\bigprobin{h_1,\taudown}{\tau>t}}\;. 
\label{renew3:4}
\end{equation}
The condition $\taudown\leqs t\wedge\tau_2$ can be rewritten as
$\taudown^\#\leqs t$, where $\taudown^\#\in\R_+\cup\set{\infty}$ denotes
the first time the process $y_t$, killed upon reaching $h_3$, reaches
$h_1$ (we set $\taudown^\#=\infty$ in case this never happens). Taking the
Laplace transform, we get 
\begin{equation}
\label{renew3:5}
\expecin{h_2}{\e^{\lambda\tau}}
= \expecin{h_2}{\e^{\lambda\tau_2}} 
+ \bigexpecin{h_2}{\e^{\lambda\taudown^\#}\indexfct{\taudown^\#<\infty}}
(\expecin{h_1}{\e^{\lambda\tau}}-1)\;.
\end{equation}
Combining~\eqref{renew3:3} with~\eqref{renew3:5}, we obtain 
\begin{equation}
\label{renew3:6}
\expecin{h_1}{\e^{\lambda\tau}} = 
\frac{\expecin{h_1}{\e^{\lambda\tau_1}}
\Bigpar{\expecin{h_2}{\e^{\lambda\tau_2}} - 
\bigexpecin{h_2}{\e^{\lambda\taudown^\#} \indexfct{\taudown^\#<\infty}}}}
{1-\expecin{h_1}{\e^{\lambda\tau_1}}
\bigexpecin{h_2}{\e^{\lambda\taudown^\#} \indexfct{\taudown^\#<\infty}}}\;.
\end{equation}
The result then follows by taking the derivative of this relation with
respect to $\lambda$, evaluating it in $\lambda=0$, and using the fact
that $\probin{h_2}{\taudown^\#<\infty}=\probin{h_2}{\taudown<\tau_2}$.
\end{proof}

\begin{remark}
\label{rem_renew1}
Relation~\eqref{renew3:6} shows that we do in fact control the Laplace transform,  
and thus the distribution of $\tau$. Since 
\begin{equation}
\label{renew4A}
\bigprob{\tau>t} = \bigprob{\e^{\lambda\tau}>\e^{\lambda t}}
\leqs \e^{-\lambda t} \bigexpec{\e^{\lambda\tau}}
\end{equation}
for all $\lambda$ such that $\expec{\e^{\lambda\tau}}$ is bounded, we see 
that the tails of the distribution of $\tau$ decay exponentially fast, with 
a rate controlled by the domain of the Laplace transform. This domain depends 
on the rate of exponential decay of $\tau_1$ and $\tau_2$ and on the denominator 
in~\eqref{renew3:6}.
\end{remark}

Relation~\eqref{renew2} has a slightly asymmetric form, due to the fact
that in the situation considered, we always have $\tau_1<\tau$. It can
however easily be extended to situations where this relation does not
hold. We will consider the still more general situation of a
time-homogeneous SDE in $\R^n$. Let $\cD_1$ and $\cD_2$ be two open sets
in $\R^n$, and let $\tau_1$, $\tau_2$ and $\tau$ denote the first-exit
times from $\cD_1$, $\cD_2$ and $\cD_1\cup\cD_2$ respectively. Finally let
$\cC_1=\partial\cD_1\cap\cD_2$ and $\cC_2=\partial\cD_2\cap\cD_1$
(\figref{fig3}b).

\begin{figure}
\centerline{\includegraphics*[clip=true,height=45mm]{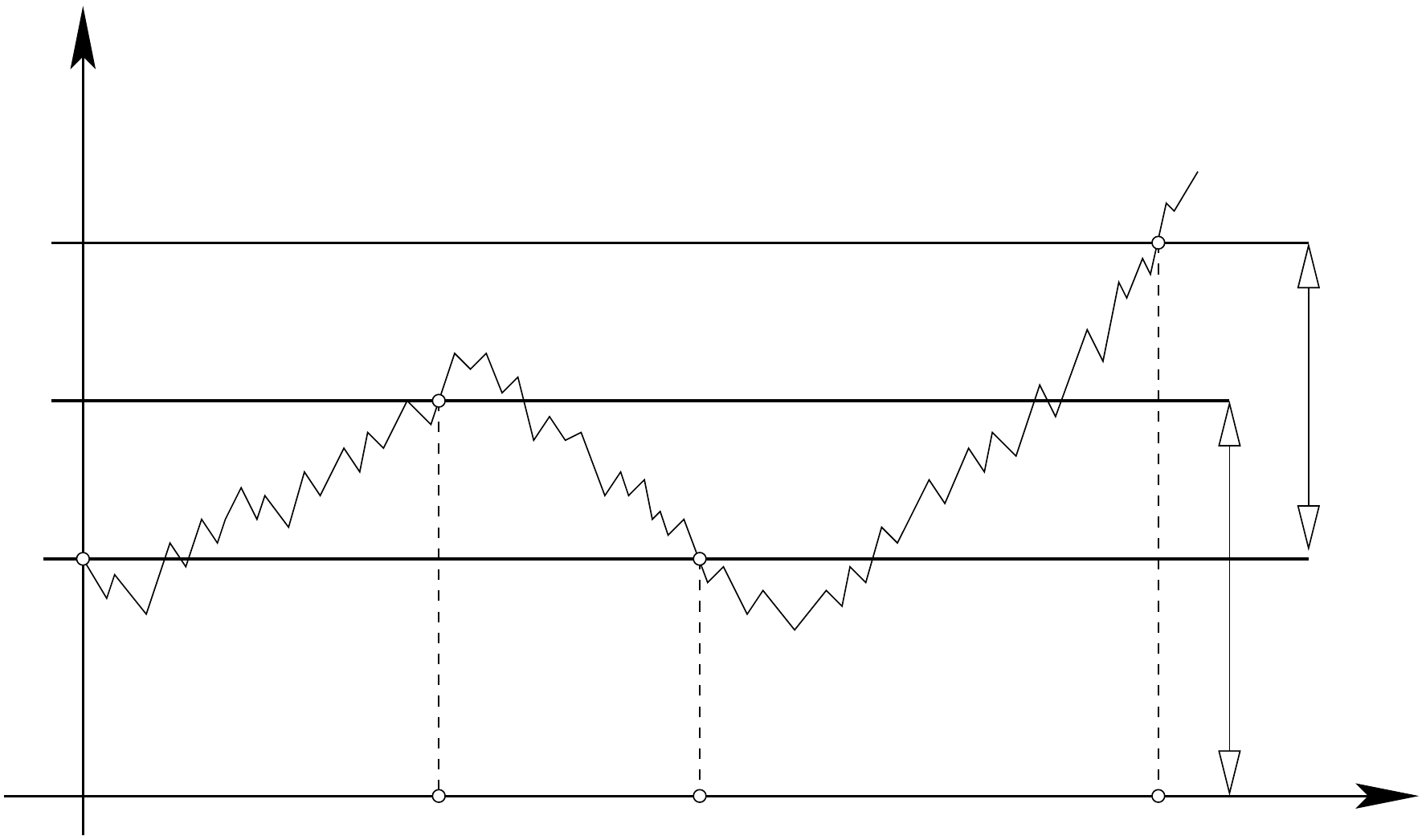}
\hspace{15mm}
\includegraphics*[clip=true,height=50mm]{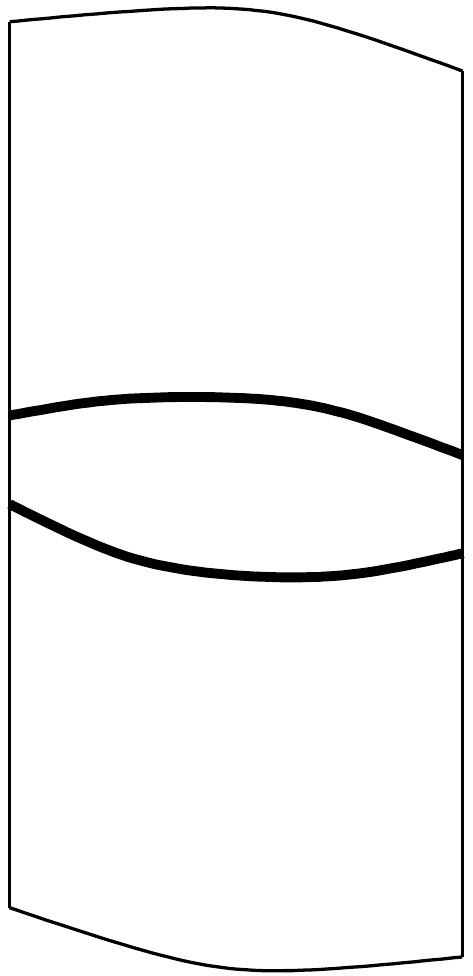}}
 \figtext{ 
	\writefig	0.5	5.2	{\bf (a)}
	\writefig	8.6	0.4	$t$
	\writefig	3.7	0.4	$\tau_1$
	\writefig	5.1	0.4	$\taudown$
	\writefig	7.6	0.4	$\tau$
	\writefig	8.2	1.4	$\cD_1$
	\writefig	8.6	2.8	$\cD_2$
	\writefig	8.0	4.1	$\bar y_t$
	\writefig	1.6	4.4	$\bar y$
	\writefig	1.35	1.9	$h_1$
	\writefig	1.35	2.8	$h_2$
	\writefig	1.35	3.6	$h_3$
	\writefig	9.5	5.2	{\bf (b)}
	\writefig	12.7	0.7	$\cD_1$
	\writefig	12.7	4.9	$\cD_2$
	\writefig	11.8	3.6	$\cC_1$
	\writefig	11.8	2.2	$\cC_2$
	\writefig	11.4	2.9	$\cD_1\cap\cD_2$
 }
 \vspace{2mm}
\caption[]{{\bf (a)} The setting of Proposition~\ref{prop_renewal1}. The
first-exit time $\tau_2$ from $\cD_2$ being the minimum
$\taudown\wedge\tau$, the distribution of $\tau$ can be related to the
distributions of  $\tau_1$, $\tau_2$ and $\taudown$. 
{\bf (b)} The setting
of Proposition~\ref{prop_renewal2}. 
}
\label{fig3}
\end{figure}

\begin{prop}
\label{prop_renewal2}
Assume there exists an $\eps>0$ such that the Laplace transforms
of $\tau_1$ and $\tau_2$ are finite for all $\lambda<\eps$. Then 
\begin{equation}
\label{renew4}
\sup_{x_0\in\cD_1\cup\cD_2}\bigexpecin{x_0}{\tau} \leqs  
\frac{2 \displaystyle 
\biggpar{\,\sup_{x\in\cD_1}\bigexpecin{x}{\tau_1}
+\sup_{y\in\cD_2}\bigexpecin{y}{\tau_2}}}
{1 - \displaystyle
\sup_{x\in\cC_2}\bigprobin{x}{x_{\tau_1}\in\cC_1}
\sup_{y\in\cC_1}\bigprobin{y}{x_{\tau_2}\in\cC_2}}\;.
\end{equation}
\end{prop}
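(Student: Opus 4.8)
The plan is to follow the proof of Proposition~\ref{prop_renewal1} closely, but to drop the special feature $\tau_1<\tau$ and instead track the alternation between $\cD_1$ and $\cD_2$ symmetrically; the geometric factor $p_1p_2$ in the denominator will arise from one full ``round trip'' $\cC_2\to\cC_1\to\cC_2$. First I would fix the notation $m_1=\sup_{x\in\cD_1}\bigexpecin{x}{\tau_1}$, $m_2=\sup_{y\in\cD_2}\bigexpecin{y}{\tau_2}$, $p_1=\sup_{x\in\cC_2}\bigprobin{x}{x_{\tau_1}\in\cC_1}$, $p_2=\sup_{y\in\cC_1}\bigprobin{y}{x_{\tau_2}\in\cC_2}$, and record the elementary topological fact that, since $\cD_1,\cD_2$ are open and the paths are continuous, a process started in $\cD_1$ satisfies $\set{\tau>\tau_1}=\set{x_{\tau_1}\in\cC_1}$: indeed $x_{\tau_1}\in\partial\cD_1$, and it lies in the open set $\cD_1\cup\cD_2$ iff it lies in $\cD_2$, i.e.\ in $\partial\cD_1\cap\cD_2=\cC_1$; symmetrically $\set{\tau>\tau_2}=\set{x_{\tau_2}\in\cC_2}$ for a process started in $\cD_2$. (If $p_1p_2=1$ the asserted bound is $+\infty$ and there is nothing to prove, so one may assume $p_1p_2<1$.)

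The core is then a two-line renewal argument. Writing $A=\sup_{z\in\cC_1}\bigexpecin{z}{\tau}$ and $B=\sup_{w\in\cC_2}\bigexpecin{w}{\tau}$ (recall $\cC_1\subset\cD_2$, $\cC_2\subset\cD_1$), the strong Markov property at $\tau_2$, resp.\ at $\tau_1$, gives for $z\in\cC_1$ and $w\in\cC_2$
\begin{equation}
\label{renew5}
\bigexpecin{z}{\tau}=\bigexpecin{z}{\tau_2}+\bigexpecin{z}{\indexfct{\tau>\tau_2}\bigexpecin{x_{\tau_2}}{\tau}}\leqs m_2+p_2 A\;,\qquad
\bigexpecin{w}{\tau}=\bigexpecin{w}{\tau_1}+\bigexpecin{w}{\indexfct{\tau>\tau_1}\bigexpecin{x_{\tau_1}}{\tau}}\leqs m_1+p_1 B\;,
\end{equation}
using the topological observation above to identify $\fP\set{\tau>\tau_i}$ with the entrance probability into $\cC_i$. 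Taking suprema yields $A\leqs m_2+p_2B$ and $B\leqs m_1+p_1A$, whence $A\leqs (m_2+p_2m_1)/(1-p_1p_2)$ and $B\leqs(m_1+p_1m_2)/(1-p_1p_2)$, both bounded by $(m_1+m_2)/(1-p_1p_2)$ since $p_1,p_2\leqs1$. The very same one-step decomposition applied to an arbitrary $x_0\in\cD_1$ (resp.\ $\cD_2$) gives $\bigexpecin{x_0}{\tau}\leqs m_1+A$ (resp.\ $\leqs m_2+B$), so that
\begin{equation}
\label{renew6}
\sup_{x_0\in\cD_1\cup\cD_2}\bigexpecin{x_0}{\tau}\leqs (m_1+m_2)+\frac{m_1+m_2}{1-p_1p_2}=(m_1+m_2)\,\frac{2-p_1p_2}{1-p_1p_2}\leqs\frac{2(m_1+m_2)}{1-p_1p_2}\;,
\end{equation}
which is exactly the claimed inequality.

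The main obstacle is not the computation but a point of rigour: the substitution that solves the system~\eqref{renew5} is legitimate only once $A$, $B$ (equivalently $\sup_{\cD_1\cup\cD_2}\bigexpecin{x_0}{\tau}$) are known to be \emph{finite}, which a priori they need not be. This is where the hypothesis that $\tau_1$ and $\tau_2$ have finite Laplace transforms for $\lambda<\eps$ enters. I would discharge it in one of two equivalent ways. (i)~Run the whole argument with $\tau$ replaced by the bounded stopping time $\tau\wedge N$: all quantities in~\eqref{renew5} are then finite, the inequalities still hold, the linear system is solved as above, and the resulting uniform bound passes to the limit $N\to\infty$ by monotone convergence. (ii)~Alternatively, repeat verbatim the Laplace-transform computation of Proposition~\ref{prop_renewal1}: for $0<\lambda<\eps$ small, the exponential-moment hypothesis makes every transform $\bigexpecin{x}{\e^{\lambda\tau}}$ finite (along each excursion it is dominated by $\bigexpecin{x}{\e^{\lambda\tau_1}}$ or $\bigexpecin{x}{\e^{\lambda\tau_2}}$, and the number of excursions has a geometric tail with ratio $p_1p_2<1$); one derives the analogue of~\eqref{renew5} with $\E[\cdot]$ replaced by $\E[\e^{\lambda\cdot}]$, solves it, and differentiates at $\lambda=0$. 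Either route also yields, as a free byproduct, an exponential tail for $\tau$, hence the decay statement recorded in Remark~\ref{rem_renew1}. A secondary, minor check is precisely the identity $\set{\tau>\tau_i}=\set{x_{\tau_i}\in\cC_i}$ used above, so that the supremum of $\bigprobin{w}{x_{\tau_1}\in\cC_1}$ over $w\in\cC_2$ genuinely controls the continuation probability; this is the content of the topological remark in the first paragraph and requires only openness of $\cD_1,\cD_2$ and path continuity.
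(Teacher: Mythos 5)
Your argument is correct, and the two routes you offer bracket the paper's own proof. Your option~(ii) is essentially what the paper does: it sets up the renewal identity in terms of the killed exit time $\tau^\#_1$, passes to Laplace transforms, and differentiates at $\lambda=0$ to recover the expectation bound; the exponential-moment hypothesis is there precisely to justify working with the transforms, and this route gives, as a byproduct, the tail estimate recorded in Remark~\ref{rem_renew1}. Your option~(i), truncating at $\tau\wedge N$ and passing to the limit by monotone convergence, is a more elementary alternative which, as you could note, in fact proves the stated inequality without invoking the Laplace-transform hypothesis at all (when $m_1$ or $m_2$ is infinite, or $p_1p_2=1$, the bound is vacuously true); the hypothesis only becomes essential if one also wants the exponential tail. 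Your topological observation that openness of $\cD_1,\cD_2$ and path continuity give $\set{\tau>\tau_i}=\set{x_{\tau_i}\in\cC_i}$ is exactly the identification the paper uses implicitly in passing from $\tau^\#_1<\infty$ to $x_{\tau_1}\in\cC_1$, and is worth spelling out as you do. One small slip: in your displayed chain~\eqref{renew5} the terminal bounds should read $m_2+p_2B$ and $m_1+p_1A$ respectively (as you in fact use in the next sentence), since after exiting $\cD_2$ from a point of $\cC_1$ the process lands in $\cC_2$, so the continuation expectation is bounded by $B$, not $A$, and symmetrically.
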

\begin{proof}
Let $\tau^\#_1$ denote the first-exit time from $\cD_1$ of the process
killed upon hitting $\partial\cD_1\setminus\cD_2$, i.e., we set
$\smash{\tau^\#_1}=\infty$ in case $x_t$ hits $\partial\cD_1\setminus\cD_2$
before $\cC_1$. Then we have, for initial conditions $x\in\cD_1$, 
\begin{equation}
\label{renew5:1}
\bigprobin{x}{\tau>t}
= \bigprobin{x}{\tau_1>t}
+ \bigexpecin{x}{\indexfct{\tau^\#_1<t}
\bigprobin{x_{\smash{\tau^\#_1}}}{\tau>t-\tau^\#_1}}\;.
\end{equation}
Taking the Laplace transform, and using the change of variables
$t=\tau^\#_1+s$, we get 
\begin{equation}
\label{renew5:2}
\bigexpecin{x}{\e^{\lambda\tau}}
= \bigexpecin{x}{\e^{\lambda\tau_1}}
+ \lambda\int_0^\infty \e^{\lambda s} 
\Bigexpecin{x}{\e^{\lambda\tau^\#_1}\indexfct{\tau^\#_1<\infty}
\bigprobin{x_{\smash{\tau^\#_1}}}{\tau>s}}\,\6s\;.
\end{equation}
Evaluating the derivative in $\lambda=0$ yields the bound 
\begin{equation}
\label{renew5:3}
\bigexpecin{x}{\tau} \leqs \bigexpecin{x}{\tau_1} 
+ \sup_{y_\in\cC_1} \bigexpecin{y}{\tau}
\bigprobin{x}{\tau^\#_1<\infty}\;.
\end{equation}
A similar relation holds for initial conditions $y\in\cD_2$. Combining
both relations after taking the supremum over $x\in\cC_2$ and $y\in\cC_1$,
we get 
\begin{equation}
\label{renew5:4}
\sup_{x\in\cC_2} \bigexpecin{x}{\tau} 
\leqs 
\frac{\displaystyle
\sup_{x\in\cC_2}\bigexpecin{x}{\tau_1} +
\sup_{y\in\cC_1}\bigexpecin{y}{\tau_2}
\sup_{x\in\cC_2}\bigprobin{x}{\tau^\#_1<\infty}}
{1 - \displaystyle
\sup_{x\in\cC_2}\bigprobin{x}{\tau^\#_1<\infty}
\sup_{y\in\cC_1}\bigprobin{y}{\tau^\#_2<\infty}}\;,
\end{equation}
and a similar relation for initial conditions in $\cC_1$. Using
again~\eqref{renew5:3} and its equivalent, but now for general
$x_0\in\cD_1$ and $y_0\in\cD_2$, yields the result (after some
simplifications -- the upper bound is not sharp).
\end{proof}


\subsection{Proof of Theorem~\ref{thm_res2}}
\label{ssec_thm2}

We can now proceed to the proof of Theorem~\ref{thm_res2}, characterising
the expectation of the first-passage time 
\begin{equation}
\label{tm01}
\tau(y) = \inf\setsuch{t>0}{\bar y_t>y}
\end{equation}
of $\bar y_t$ at any level $y\in[-1,1]$. Notice that we have (cf.~\eqref{tau07})
\begin{equation}
\label{tm02}
\tau_{[-1,y)} = \tau(y) \wedge \tau_Z\;,
\end{equation}
where 
\begin{equation}
\label{tm03}
\tau_Z = \inf\setsuch{t>0}{\abs{Z_t}>1}
\end{equation}
is typically very large for small $\sigma$. 
We start by bounding the expectation of $\tau_{[-1,y)}$.

\begin{prop}
\label{prop_thm2:1}
For any $x_0=(Z_0,\bar y_0,\phi_0)$ with $\abs{Z_0}\leqs 1$, 
\begin{equation}
\label{tm1}
\bigexpecin{x_0}
{\tau_{[-1,y)}} 
\leqs e_1(y)
\biggbrak{\frac1\gamma\vee T^\star}\;,
\end{equation}
where the function $e_1(y)$ is bounded for $y<1$. 
\end{prop}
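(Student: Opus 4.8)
The plan is to split $[-1,y)$ into a bounded number of sub-intervals, estimate with Propositions~\ref{prop_tau01}--\ref{prop_tau04} and Lemma~\ref{lem_tau0} the expected time $\bar y_t$ needs to cross each of them, and glue the estimates together by means of the renewal bound of Proposition~\ref{prop_renewal2} (whose Laplace-transform hypothesis is automatically met, since Lemma~\ref{lem_tau0} turns every one of our estimates into an exponential tail bound). Two remarks simplify the set-up: since $\bar y_t\in[-1,1]$ always, the endpoint $-1$ is never exited, so $\tau_{[-1,y)}$ is just the first time $\bar y_t$ reaches $y$ or $\abs{Z_t}$ reaches $1$; and since every estimate of Section~\ref{ssec_tau} controls an exit time from a domain $\cD_I=(-1,1)\times I\times\fS^1$, the event $\set{\abs{Z_t}=1}$ underlying $\tau_Z$ in~\eqref{tm02} is already built in, and an exit through $\abs{Z}=1$ only makes $\tau_{[-1,y)}$ smaller, so it never spoils an upper bound. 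The case $y\leqs-\delta_1$ is exactly Proposition~\ref{prop_tau01} together with~\eqref{tau06}; so we assume $y>-\delta_1$, fix once and for all small constants $\delta_1\in(0,1)$ and $\delta_2\in(0,\delta_1)$, and take $\kappa$, $\sigma$ below all the thresholds of Propositions~\ref{prop_tau01}--\ref{prop_tau04}.

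Stage~(i): for $x_0$ with $\bar y_0<-\delta_1$, Proposition~\ref{prop_tau01} with Lemma~\ref{lem_tau0} and~\eqref{tau06} gives $\sup_{x_0}\bigexpecin{x_0}{\tau_{[-1,-\delta_1)}}=\Order{1/\gamma\vee T^\star}$. Stage~(ii): for $x_0$ with $-\delta_1\leqs\bar y_0\leqs-c^\star\kappa$, so $x_0\in\cD_{(-1+\delta_2,-c^\star\kappa)}$, Proposition~\ref{prop_tau02} yields an exponential tail of rate of order $1/T^\star$, hence $\sup_{x_0}\bigexpecin{x_0}{\tau_{(-1+\delta_2,-c^\star\kappa)}}=\Order{T^\star}$; moreover, since the drift $F$ is non-negative on $\set{\bar y\leqs-c^\star\kappa}$ (cf.\ the proof of Proposition~\ref{prop_tau02}), the process $\bar y$ stopped at $\tau_{(-1+\delta_2,-c^\star\kappa)}$ is a bounded submartingale, so optional stopping shows this exit occurs downwards at $-1+\delta_2$ only with probability at most $(\delta_1-c^\star\kappa)/(1-\delta_2-c^\star\kappa)=\Order{\delta_1}$, in particular bounded away from $1$ uniformly in $\gamma$, $\kappa$, $\sigma$. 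Feeding stages~(i) and~(ii) into Proposition~\ref{prop_renewal2}, with overlapping sub-domains whose union is $\cD_{(-1,-c^\star\kappa)}$ and the interface return probability controlled by the last estimate, gives $\sup_{x_0}\bigexpecin{x_0}{\tau_{[-1,-c^\star\kappa)}}=\Order{1/\gamma\vee T^\star}$; this already proves the proposition for $-\delta_1<y\leqs-c^\star\kappa$.

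Stage~(iii): assume $y>-c^\star\kappa$ and set $\delta_3=(1-y)/2$, so that $1-\delta_3>y$ and, for $\kappa$ small, $-1+\delta_3<-c^\star\kappa$. Since $\cD_{(-1+\delta_3,y)}\subset\cD_{(-1+\delta_3,1-\delta_3)}$, Proposition~\ref{prop_tau03} with Lemma~\ref{lem_tau0} and~\eqref{tau06} bounds $\sup_{x_0}\bigexpecin{x_0}{\tau_{(-1+\delta_3,y)}}$ by $C\,\bigpar{\delta_3\,\Phi(\tfrac{1-\delta_3}{h\sqrt{\delta_3}})\log(1/\Phi(\tfrac{1-\delta_3}{h\sqrt{\delta_3}}))}^{-1}\bigpar{1/\gamma\vee T^\star}$, which is finite for every $y<1$ (it diverges as $y\nearrow1$). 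A last application of Proposition~\ref{prop_renewal2}, with $\cD_1=\cD_{(-1,-c^\star\kappa)}$ (exit time controlled by stages~(i)--(ii)) and $\cD_2=\cD_{(-1+\delta_3,y)}$ (exit time controlled as above), whose union is $\cD_{(-1,y)}$ --- and whose exit time is therefore $\tau_{[-1,y)}$ --- completes the argument: the interface $\cC_2=\partial\cD_2\cap\cD_1$ lies at $\bar y=-1+\delta_3$, so exiting $\cD_2$ through $\cC_2$ returns the path to $\cD_1$, while the renewal denominator (one minus the product of the two crossing probabilities) is positive because there is a positive probability of exiting $\cD_2$ upward past $y$ --- a probability bounded below uniformly for $y$ in any compact subset of $[-1,1)$ by comparison with a solvable one-dimensional equation in the spirit of~\eqref{reseff4}, though this bound degrades as $y\to1$. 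This gives $\sup_{x_0}\bigexpecin{x_0}{\tau_{[-1,y)}}\leqs e_1(y)\bigbrak{1/\gamma\vee T^\star}$ with $e_1(y)$ the constant above divided by this denominator, bounded for $y<1$.

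The genuinely delicate point is the bookkeeping in the renewal steps: one must check that every return probability stays bounded away from $1$ uniformly in $\gamma$, $\kappa$ and $\sigma$. For the two lower stages this yields a parameter-free constant, via the submartingale argument above; for the top stage the corresponding bound weakens as $y\to1$ but stays strictly below $1$ for every $y<1$, which is exactly why $e_1(y)$ is finite --- though not uniformly bounded --- on $[-1,1)$. The rest is routine: matching the time windows in Propositions~\ref{prop_tau02}--\ref{prop_tau04}, checking that an exit through $\abs{Z}=1$ only shortens $\tau_{[-1,y)}$, running stage~(i) first to dispose of initial conditions with $\bar y_0=-1$, and handling the degenerate ranges $y\leqs-\delta_1$ and $-\delta_1<y\leqs-c^\star\kappa$, where fewer stages are needed and the bound reduces to $\Order{1/\gamma\vee T^\star}$.
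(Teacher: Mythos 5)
Your overall strategy mirrors the paper's: bound the exit time from successive sub-intervals of $[-1,1)$ via Propositions~\ref{prop_tau01}--\ref{prop_tau04} and Lemma~\ref{lem_tau0}, then glue with the renewal estimate of Proposition~\ref{prop_renewal2}. Your stage~(ii) submartingale argument is a genuine and valid alternative to what the paper does there. Since the proof of Proposition~\ref{prop_tau02} yields $F(Z,\bar y,\phi)\geqs -\tfrac{4\kappa^2\gamma}{1+\gamma^2}Z^2(\bar y+c^\star\kappa)\geqs 0$ on $\set{\bar y\leqs -c^\star\kappa,\ \abs{Z}\leqs1}$, and the stopped exit time has finite expectation, optional stopping immediately gives the return probability bound $P_2\leqs(\delta_1-c^\star\kappa)/(1-\delta_2-c^\star\kappa)=\Order{\delta_1}$; the paper instead obtains it by splitting $\set{\taudown<\tauup}$ at the time $T^\star$ and invoking Propositions~\ref{prop_tau04} and~\ref{prop_tau02}. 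Your route is cleaner.

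Stage~(iii), however, contains a genuine gap. You take a single big renewal step with $\cD_2=\cD_{(-1+\delta_3,\,y)}$, $\delta_3=(1-y)/2$, and assert that the renewal denominator is positive because ``there is a positive probability of exiting $\cD_2$ upward past $y$ --- a probability bounded below uniformly \dots\ by comparison with a solvable one-dimensional equation in the spirit of~\eqref{reseff4}.'' Equation~\eqref{reseff4} is only a formal diffusion approximation introduced heuristically in Section~\ref{ssec_reseff}; no rigorous comparison with it is established anywhere in the paper, and none is available from the tools you may cite. What the paper actually does at this point is iterate the two-domain renewal through intermediate levels (``going first up to level~$1/2$, and finally to level $1-\delta_3$''), at each iteration re-choosing the $\delta_i$'s so that Propositions~\ref{prop_tau02}--\ref{prop_tau04} make $\probin{}{\taudown<\tauup}$ bounded away from~$1$. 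Your single big step cannot be finished this way: the natural union bound $P_2\leqs\prob{\taudown<T}+\prob{\text{no exit by }T}$, with the second term controlled by Proposition~\ref{prop_tau03} as $\Phi\bigpar{\tfrac{1-\delta_3}{h\sqrt{\delta_3}}}$, does not stay below~$1$ when $\delta_3$ is small, which is precisely the regime $y$ close to~$1$ you need. You need the staged climb through intermediate levels, or some other rigorous lower bound on the upward crossing probability; the appeal to~\eqref{reseff4} is not a proof step.

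A minor point of hygiene: the submartingale argument in stage~(ii) requires the stopped process to be a submartingale, which in turn needs $\E[\tau_{(-1+\delta_2,-c^\star\kappa)}]<\infty$ before one applies optional stopping; this is indeed guaranteed by Proposition~\ref{prop_tau02} and Lemma~\ref{lem_tau0}, but it is worth stating explicitly since it closes a potential circularity.
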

\begin{proof} 
We start by setting $\cD_1=\cD_{[-1,-\delta_1)}$ and 
$\cD_2=\cD_{(-1+\delta_2,-c^\star\kappa)}$. Then
Proposition~\ref{prop_renewal2} yields a bound of the form
\begin{equation}
\label{tm2:1}
\bigexpecin{x_0}{\tau_{[-1,-c^\star\kappa)}}
\leqs \frac{2(E_1+E_2)}{1-P_1P_2}\;,
\end{equation}
where Proposition~\ref{prop_tau01} shows 
\begin{equation}
\label{tm2:2}
E_1 = \sup_{x\in\cD_{[-1,-\delta_1)}}
\bigexpecin{x}{\tau_{[-1,-\delta_1)}}
= \biggOrder{\frac1\gamma\vee\frac{T^\star}{\delta_1}}\;,
\end{equation}
and Proposition~\ref{prop_tau02} yields 
\begin{equation}
\label{tm2:3}
E_2 = \sup_{x\in\cD_{(-1+\delta_2,-c^\star\kappa)}}
\bigexpecin{x}{\tau_{(-1+\delta_2,-c^\star\kappa)}}
\leqs \frac{c_2}{\sqrt{\delta_2}}T^\star\;. 
\end{equation}
We expect $P_1$ to be very close to $1$, so we only try to bound $P_2$,
which is done as follows. Let $\taudown$ and $\tauup$ denote the
first-hitting times of the levels $-1+\delta_2$ and $-c^\star\kappa$
respectively. Then 
\begin{align}
\nonumber
P_2 &= \sup_{x \colon \bar y=-\delta_1}
\bigprobin{x}{\bar y_{\tau_{[-1,-\delta_1)}}=-1+\delta_2} \\
\nonumber
&= \sup_{x \colon \bar y=-\delta_1}
\bigprobin{x}{\taudown<\tauup} \\
&\leqs \sup_{x \colon \bar y=-\delta_1}
\Bigbrak{\bigprobin{x}{\taudown<T^\star}
+ \bigprobin{x}{\tauup>T^\star}}\;.
\label{tm2:4}
\end{align}
Proposition~\ref{prop_tau04} shows that $\probin{x}{\taudown<T^\star}$ is
of order $\e^{-c_4(1-\delta_2-2c^\star\kappa)^2}$, while
Proposition~\ref{prop_tau02} shows that $\probin{x}{\tauup>T^\star}$ is of
order $\abs{\delta_1}/\sqrt{\delta_2}$. Thus choosing, for instance,
$\delta_2=1/2$, and $\delta_1$ small enough makes $P_2$ bounded away from
one. This proves~\eqref{tm1} for all $y<-c^\star\kappa$.
We now repeat the procedure, going first up to level $1/2$,
and finally to level $1-\delta_3$. The $\delta_i$'s can be chosen each
time in such a way that $\prob{\taudown<\tauup}$ is bounded away from $1$.
\end{proof}


\begin{proof}[{\sc Proof of Theorem~\ref{thm_res2}}]
We will use the fact that the Ornstein--Uhlenbeck process $Z_t$ 
reaches $0$ in a time of order $1/\gamma$, and needs an exponentially 
long time to go from $0$ to $1$. In order to apply Proposition~\ref{prop_renewal2}, 
we introduce sets 
\begin{align}
\nonumber
\cD_1 &= (-1,1)\times[-1,y)\times\fS^1 = \cD_{[-1,y)} \\
\cD_2 &= \R^*\times(-1,y)\times\fS^1\;.
\label{tm4:1}
\end{align}
Then the first-exit time from $\cD_2$ equals $\tau_0\wedge\tau(y)$, 
where $\tau_0$ denotes the first time $Z_t$ hits $0$. We have 
$\cC_1=\partial\cD_1\cap\cD_2=\set{-1,1}\times(-1,y)\times\fS^1$ 
and $\cC_2=\partial\cD_2\cap\cD_1=\set{0}\times(-1,y)\times\fS^1$.
Thus~\eqref{renew5:4} yields 
\begin{equation}
\label{tm4:3}
\sup_{x\in\cC_2}\bigexpecin{x}{\tau(y)} 
\leqs \frac{\displaystyle \sup_{x\in\cC_2} 
\bigexpecin{x}{\tau_{[-1,y)}} + 
\sup_{y\in\cC_1} \bigexpecin{y}{\tau_0}
\sup_{x\in\cC_2} \bigprobin{x}{\tau_Z\leqs\tau(y)}}
{1 - \displaystyle 
\sup_{x\in\cC_2} \bigprobin{x}{\tau_Z\leqs\tau(y)}
\sup_{y\in\cC_1} \bigprobin{y}{\tau_0\leqs\tau(y)}
}\;.
\end{equation}
Using the reflection principle, one obtains 
$\expecin{y}{\tau_0} = \Order{1/\gamma}$. Furthermore, we can 
write for any $K>0$
\begin{equation}
\label{tm4:4}
\bigprobin{x}{\tau_Z\leqs\tau(y)} 
\leqs \bigprobin{x}{\tau_Z\leqs K}
+ \bigprobin{x}{\tau(y)\wedge\tau_Z > K}\;.
\end{equation}
We choose $K = N(\gamma^{-1}\vee T^\star)$ for some $N>0$. 
Then on one hand, Markov's inequality,~\eqref{tm02} and 
Proposition~\ref{prop_thm2:1} yield 
\begin{equation}
\label{tm4:5}
\bigprobin{x}{\tau(y)\wedge\tau_Z > K} 
 = \bigprobin{x}{\tau_{[-1,y)} > K} 
\leqs \frac{\bigexpecin{x_2}{\tau_{[-1,y)}}}{K}
\leqs \frac{e_1(y)}{N}\;.
\end{equation}
On the other hand, a well-known property of the Ornstein--Uhlenbeck process 
is that $\bigprobin{x}{\tau_Z\leqs K}$ behaves like $(K/\sigma)\e^{-\gamma/\sigma^2}$ 
(see, for instance, \cite[Proposition~3.3]{BG1} or \cite[Appendix]{BG7}).
Thus choosing for instance $N^2=e_1(y)\sigma\e^{\gamma/\sigma^2}/(\gamma^{-1}\vee T^\star)$
makes $\probin{x}{\tau_Z\leqs\tau(y)}$ of order $\sqrt{e_1(y)(\gamma^{-1}\vee T^\star)} 
\sigma^{-1/2}\e^{-\gamma/2\sigma^2}$, which is negligible compared to the expectation 
of $\tau_{[-1,y)}$ and to $1$. 
This proves the result for initial conditions $x\in\cC_2$. 

In order to extend it to general initial conditions, it suffices to 
apply~\eqref{renew5:3} and its twin relation, and 
to integrate over the initial distribution of $Z$.
\end{proof}


\subsection*{Acknowledgements}

This work has been supported by the French Ministry of Research, by way of
the {\it Action Concert\'ee Incitative (ACI) Jeunes Chercheurs,
Mod\'elisation stochastique de syst\`emes hors \'equilibre\/}.

\small
\bibliography{../SU2}

\def\cprime{$'$} \def\cprime{$'$}
\providecommand{\bysame}{\leavevmode\hbox to3em{\hrulefill}\thinspace}
\providecommand{\MR}{\relax\ifhmode\unskip\space\fi MR }
\providecommand{\MRhref}[2]{%
  \href{http://www.ams.org/mathscinet-getitem?mr=#1}{#2}
}
\providecommand{\href}[2]{#2}
\begin{thebibliography}{EPRB99}

\bibitem[AJ07a]{AttalJoye2007JFA}
St{\'e}phane Attal and Alain Joye, \emph{The {L}angevin equation for a quantum
  heat bath}, J. Funct. Anal. \textbf{247} (2007), no.~2, 253--288.

\bibitem[AJ07b]{AttalJoye2007JSP}
\bysame, \emph{Weak coupling and continuous limits for repeated quantum
  interactions}, J. Stat. Phys. \textbf{126} (2007), no.~6, 1241--1283.

\bibitem[AP06]{AttalPautrat2006}
St{\'e}phane Attal and Yan Pautrat, \emph{From repeated to continuous quantum
  interactions}, Ann. Henri Poincar\'e \textbf{7} (2006), no.~1, 59--104.

\bibitem[BG02]{BG1}
Nils Berglund and Barbara Gentz, \emph{Pathwise description of dynamic
  pitchfork bifurcations with additive noise}, {Probab.} {Theory} {Related}
  {Fields} \textbf{122} (2002), no.~3, 341--388.

\bibitem[BG03]{BG6}
\bysame, \emph{Geometric singular perturbation theory for stochastic
  differential equations}, J.~{Differential} {Equations} \textbf{191} (2003),
  1--54.

\bibitem[BG04]{BG7}
\bysame, \emph{On the noise-induced passage through an unstable periodic
  orbit~{I}: {T}wo-level model}, J.~{Statist.} {Phys.} \textbf{114} (2004),
  1577--1618.

\bibitem[BG06]{BGbook}
\bysame, \emph{Noise-induced phenomena in slow-fast dynamical systems. a
  sample-paths approach}, Probability and its Applications, Springer-Verlag,
  London, 2006.

\bibitem[BJM06]{BruneauJoyeMerkli2006}
Laurent Bruneau, Alain Joye, and Marco Merkli, \emph{Asymptotics of repeated
  interaction quantum systems}, J. Funct. Anal. \textbf{239} (2006), no.~1,
  310--344.

\bibitem[DV76]{DonskerVaradhan76}
M.~D. Donsker and S.~R.~S. Varadhan, \emph{On the principal eigenvalue of
  second-order elliptic differential operators}, Comm. Pure Appl. Math.
  \textbf{29} (1976), no.~6, 595--621.

\bibitem[EPRB99]{EPR}
J.-P. Eckmann, C.-A. Pillet, and L.~Rey-Bellet, \emph{Non-equilibrium
  statistical mechanics of anharmonic chains coupled to two heat baths at
  different temperatures}, Comm. Math. Phys. \textbf{201} (1999), no.~3,
  657--697.

\bibitem[FKM65]{FKM}
G.~W. Ford, M.~Kac, and P.~Mazur, \emph{Statistical mechanics of assemblies of
  coupled oscillators}, J. Mathematical Phys. \textbf{6} (1965), 504--515.

\bibitem[FW04]{FreidlinWeber04}
Mark Freidlin and Matthias Weber, \emph{Random perturbations of dynamical
  systems and diffusion processes with conservation laws}, Probab. Theory
  Related Fields \textbf{128} (2004), no.~3, 441--466.

\bibitem[Jen12]{Jentzsch1912}
Jentzsch, \emph{{\"U}ber {I}ntegralgleichungen mit positivem {K}ern}, {J}. f.
  d. reine und angew. {M}ath. \textbf{141} (1912), 235--244.

\bibitem[LL58]{LandauLifshitzVol3}
L.~D. Landau and E.~M. Lifshitz, \emph{Quantum mechanics: non-relativistic
  theory. {C}ourse of {T}heoretical {P}hysics, {V}ol. 3}, Addison-Wesley Series
  in Advanced Physics, Pergamon Press Ltd., London-Paris, 1958, Translated from
  the Russian by J. B. Sykes and J. S. Bell.

\bibitem[NvdS90]{NijmeijerVanderSchaft90}
Henk Nijmeijer and Arjan van~der Schaft, \emph{Nonlinear dynamical control
  systems}, Springer-Verlag, New York, 1990.

\bibitem[PW66]{ProtterWeinberger72}
M.~H. Protter and H.~F. Weinberger, \emph{On the spectrum of general second
  order operators}, Bull. Amer. Math. Soc. \textbf{72} (1966), 251--255.

\bibitem[RBT00]{RBT00}
Luc Rey-Bellet and Lawrence~E. Thomas, \emph{Asymptotic behavior of thermal
  nonequilibrium steady states for a driven chain of anharmonic oscillators},
  Comm. Math. Phys. \textbf{215} (2000), no.~1, 1--24.

\bibitem[RBT02]{RBT02}
\bysame, \emph{Exponential convergence to non-equilibrium stationary states in
  classical statistical mechanics}, Comm. Math. Phys. \textbf{225} (2002),
  no.~2, 305--329.

\bibitem[SL77]{SL77}
Herbert Spohn and Joel~L. Lebowitz, \emph{Stationary non-equilibrium states of
  infinite harmonic systems}, Comm. Math. Phys. \textbf{54} (1977), no.~2,
  97--120.

\bibitem[SV72a]{StroockVaradhan72}
D.~Stroock and S.~R.~S. Varadhan, \emph{On degenerate elliptic-parabolic
  operators of second order and their associated diffusions}, Comm. Pure Appl.
  Math. \textbf{25} (1972), 651--713.

\bibitem[SV72b]{StroockVaradhanProc72}
Daniel~W. Stroock and S.~R.~S. Varadhan, \emph{On the support of diffusion
  processes with applications to the strong maximum principle}, Proceedings of
  the Sixth Berkeley Symposium on Mathematical Statistics and Probability
  (Univ. California, Berkeley, Calif., 1970/1971), Vol. III: Probability theory
  (Berkeley, Calif.), Univ. California Press, 1972, pp.~333--359.

\end{thebibliography}
\bibliographystyle{amsalpha}               

\goodbreak
\bigskip\bigskip\noindent
{\small 
Jean-Philippe Aguilar \\ 
Centre de Physique Th\'eorique (CPT)\\
{\sc CNRS, UMR 6207} \\
Campus de Luminy, Case 907 \\
13288~Marseille Cedex 9, France \\
{\it E-mail address: }{\tt aguilar@cpt.univ-mrs.fr}
}

\bigskip\bigskip\noindent
{\small 
Nils Berglund \\ 
Universit\'e d'Orl\'eans, Laboratoire {\sc MAPMO} \\
{\sc CNRS, UMR 6628} \\
F\'ed\'eration Denis Poisson, FR 2964 \\
B\^atiment de Math\'ematiques, B.P. 6759\\
45067~Orl\'eans Cedex 2, France \\
{\it E-mail address: }{\tt nils.berglund@univ-orleans.fr}

}


\end{document}